\theoremstyle{plain}
\newtheorem{theorem}[subsubsection]{Theorem}
\newtheorem{corollary}[theorem]{Corollary}
\newtheorem{lemma}[theorem]{Lemma}
\newtheorem{proposition}[theorem]{Proposition}
\newtheorem{mainthm}{Theorem}
\theoremstyle{definition}
\newtheorem{definition}[theorem]{Definition}
\newtheorem{remark}[theorem]{Remark}
\newcommand{\Ac}{\mathcal{A}}
\newcommand{\Cc}{\mathcal{C}}
\newcommand{\Ec}{\mathcal{E}}
\newcommand{\Gc}{\mathcal{G}}
\newcommand{\Lc}{\mathcal{L}}
\newcommand{\Mc}{\mathcal{M}}
\newcommand{\Oc}{\mathcal{O}}
\newcommand{\Uc}{\mathcal{U}}
\newcommand{\Xc}{\mathcal{X}}
\newcommand{\Yc}{\mathcal{Y}}
\newcommand{\Bun}{\mathcal{B}\mathrm{un}}
\renewcommand{\AA}{\mathbb{A}}
\newcommand{\PP}{\mathbb{P}}
\newcommand{\RR}{\mathbb{R}}
\newcommand{\ZZ}{\mathbb{Z}}
\newcommand{\NN}{\mathbb{N}}
\newcommand{\m}{\underline{m}}
\newcommand{\stackypoints}{\underline{\textit{p}}}
\newcommand{\Knum}{\mathrm{K}^{\mathrm{num}}_0}
\newcommand{\Knot}{\mathrm{K}_0}
\newcommand{\Lb}{\mathbf{L}}
\newcommand{\Mr}{M}
\newcommand{\Rb}{\mathbf{R}}
\newcommand{\Rsf}{\mathsf{R}}
\newcommand{\alphab}{{\bm{\alpha}}}
\newcommand{\betab}{{\bm{\beta}}}
\newcommand{\gammab}{{\bm{\gamma}}}
\newcommand{\deltab}{{\bm{\delta}}}
\newcommand{\etab}{{\bm{\eta}}}
\newcommand{\omegab}{{\bm{\omega}}}
\newcommand{\alphaw}{{\widetilde {\bm{\alpha}} }}
\newcommand{\betaw}{{\widetilde {\bm{\beta}} }}
\newcommand{\deltaw}{{\widetilde {\bm{\delta}} }}
\newcommand{\etaw}{{\widetilde {\bm{\eta}} }}
\DeclareMathOperator{\car}{char}
\DeclareMathOperator{\coker}{coker}
\DeclareMathOperator{\Coh}{\mathcal{C}oh}
\DeclareMathOperator{\Dqc}{D_{qc}}
\DeclareMathOperator\ev{ev}
\DeclareMathOperator\Ext{Ext}
\DeclareMathOperator\Extstack{\mathcal{E}xt}
\DeclareMathOperator\Hom{Hom}
\DeclareMathOperator\im{Im}
\DeclareMathOperator\localHom{\mathcal{H}om}
\DeclareMathOperator\localRHom{\Rb\!\mathcal{H}om}
\DeclareMathOperator\rk{rank}
\newcommand{\SD}{\operatorname{SD}}
\DeclareMathOperator\RHom{\Rb\!{H}om}
\DeclareMathOperator{\Spec}{Spec}
\newcommand{\Aut}{\operatorname{Aut}}
\newcommand{\rank}{\operatorname{rank}}
\newcommand{\ext}[4][]{\operatorname{Ext}_{#1}^{#2}\mleft(#3, #4\mright)}
\newcommand{\isom}[2]{\underline{\operatorname{Isom}}\mleft(#1, #2\mright)}
\newcommand{\littleext}[4][]{\operatorname{ext}_{#1}^{#2}\mleft(#3, #4\mright)}
\newcommand{\pic}[1]{\operatorname{Pic}_{#1}}	%The group
\newcommand{\Pic}{\mathcal{P}\mathrm{ic}} %The stack
\mathchardef\mhyphen="2D
\newcommand{\sst}{\mhyphen\mathrm{ss}}
\newcommand{\st}{\mhyphen\mathrm{s}}
\newcommand{\blank}{\, \underline{\;\;}\, }
\newcommand{\Gm}{\mathbb{G}_\mathrm{m}}
\newcommand{\ST}{\overline{\mathrm{ST}}}
\def\IfEmptyTF#1{%
  \if\relax\detokenize{#1}\relax
    \expandafter\@firstoftwo
  \else
    \expandafter\@secondoftwo
  \fi}
\DeclareDocumentCommand\MR{o}
{ \IfNoValueTF{#1}{\Rsf_{\betab}}{\Rsf_{#1}}}
\DeclareDocumentCommand\Mstack{o}
{ \IfNoValueTF{#1}{\Bun_{\betab}}{\Bun_{#1}}}
\DeclareDocumentCommand\Mstacka{o}
{ \IfNoValueTF{#1}{\Bun^{\alphab\sst}}{\Bun_{#1\sst}}}
\DeclareDocumentCommand\Mspace{o}
{ \IfNoValueTF{#1}{\Mr_{\betab}}{\Mr_{#1}}}
\DeclareDocumentCommand\Mstackss{oo}
{\IfNoValueTF{#1}
	{\IfNoValueTF{#2}
        {\Mc^{\alphab\sst}_{\betab}}
        {\IfEmptyTF{#2}
            {\Mc^{\alphab\sst}_{\betab}}
            {\Mc^{#2\sst}_{\betab}}
        }
    }
    {\IfEmptyTF{#1}
	   {{\IfNoValueTF{#2}
            {\Mc^{\alphab\sst}_{\betab}}
            {\IfEmptyTF{#2}
                {\Mc^{\alphab\sst}_{\betab}}
                {\Mc^{#2\sst}_{\betab}}
            }
        }
    }{\IfEmptyTF{#2}
        {{\Mc^{\alphab\sst}_{#1}}}
        {\Mc^{#2\sst}_{#1}}}}
}
\DeclareDocumentCommand\Mstackst{oo}
{\IfNoValueTF{#1}
	{\IfNoValueTF{#2}
        {\Mc^{\alphab\st}_{\betab}}
        {\IfEmptyTF{#2}
            {\Mc^{\alphab\st}_{\betab}}
            {\Mc^{#2\sst}_{\betab}}
        }
    }
    {\IfEmptyTF{#1}
	   {{\IfNoValueTF{#2}
            {\Mc^{\alphab\st}_{\betab}}
            {\IfEmptyTF{#2}
                {\Mc^{\alphab\st}_{\betab}}
                {\Mc^{#2\sst}_{\betab}}
            }
        }
    }{\IfEmptyTF{#2}
        {{\Mc^{\alphab\st}_{#1}}}
        {\Mc^{#2\st}_{#1}}}}
}
\DeclareDocumentCommand\Mspacess{oo}
{\IfNoValueTF{#1}
	{\IfNoValueTF{#2}
        {\Mr^{\alphab\sst}_{\betab}}
        {\IfEmptyTF{#2}
            {\Mr^{\alphab\sst}_{\betab}}
            {\Mr^{#2\sst}_{\betab}}
        }
    }
    {\IfEmptyTF{#1}
	   {{\IfNoValueTF{#2}
            {\Mr^{\alphab\sst}_{\betab}}
            {\IfEmptyTF{#2}
                {\Mr^{\alphab\sst}_{\betab}}
                {\Mr^{#2\sst}_{\betab}}
            }
        }
    }{\IfEmptyTF{#2}
        {{\Mr^{\alphab \sst}_{#1}}}
        {\Mr^{#2\sst}_{#1}}}}
}
\DeclareDocumentCommand\MRst{oo}
{\IfNoValueTF{#1}
	{\IfNoValueTF{#2}
        {\Rsf^{\alpha\st}_{\beta}}
        {\IfEmptyTF{#2}
            {\Rsf^{\alpha\st}_{\beta}}
            {\Rsf^{#2\sst}_{\beta}}
        }
    }
    {\IfEmptyTF{#1}
	   {{\IfNoValueTF{#2}
            {\Bun^{\alpha\st}_{\beta}}
            {\IfEmptyTF{#2}
                {\Rsf^{\alpha\st}_{\beta}}
                {\Rsf^{#2\sst}_{\beta}}
            }
        }
    }{\IfEmptyTF{#2}
        {{\Rsf^{\alpha\st}_{#1}}}
        {\Rsf^{#2\st}_{#1}}}}
}
\begin{document}

\title{Projectivity of good moduli spaces\\ of vector bundles on stacky curves}
\author{Chiara Damiolini, Victoria Hoskins, Svetlana Makarova, Lisanne Taams}
\date{}
\maketitle

\begin{abstract}
Moduli of vector bundles on stacky curves behave similarly to moduli of vector bundles on curves, except there are additional numerical invariants giving many different notions of stability.
We apply the existence criterion for good moduli spaces of stacks to show that the moduli stack of semistable vector bundles on a stacky curve has a proper good moduli space. We moduli-theoretically prove that a natural determinantal line bundle on this moduli space is ample, thus proving this moduli space is projective. Our methods give effective bounds for when a power of this line bundle is basepoint-free. As a special case, we obtain new and effective constructions of moduli spaces of parabolic bundles.
\end{abstract}

\tableofcontents

\section*{Introduction} \label{sec:intro}
\addcontentsline{toc}{section}{\nameref{sec:intro}}

Many classical results concerning vector bundles on smooth projective curves can be naturally extended to vector bundles on \emph{stacky curves}, by which we mean a smooth proper tame Deligne--Mumford stack of dimension 1 over a field $k$ that contains a scheme as a dense open subset (to ensure that the generic stabiliser is trivial).
For example, Serre duality and the Riemann--Roch Theorem extend to stacky curves, appropriate powers of the canonical sheaf give embeddings of stacky curves into weighted projective stacks and the uniformisation theorem extends to analytic stacky curves; see \cite{behrend-noohi:2006,voight-ZB:2022} and the comprehensive treatment in the upcoming thesis of the fourth author \cite{Lisanne}.

The category of coherent sheaves on a stacky curve is also of homological dimension $1$ and has a corresponding smooth Artin stack whose dimension is computed using an Euler pairing.
As our stacky curves are tame, the coarse moduli space $\pi \colon  \Cc \rightarrow C$ of the stacky curve $\Cc$ is actually a good moduli space and the stabiliser groups are finite cyclic groups.
A nice new feature of the theory of vector bundles on stacky curves is that it mixes in the representation theory of finite cyclic groups: the restriction of a vector bundle $E \rightarrow \Cc$ to a stacky point $p$ (i.e.\ a point with non-trivial stabiliser $\mu_{e}$) is a vector bundle on $B\mu_{e}$, and thus a $\ZZ/e\ZZ$-graded vector space.
This means that beyond the rank and degree, there are additional numerical invariants for vector bundles on $\Cc$ called \emph{multiplicities}, which are the dimensions of the graded pieces of the fibre at each stacky point $p$. In particular, the numerical Grothendieck group of $\Cc$ is much bigger than that of $C$. 

This larger numerical Grothendieck group leads to various different notions of stability for vector bundles on $\Cc$, unlike the classical case for $C$, where there is only one notion of slope stability. 
This abundance of notions of stability means we should be able to construct many different moduli spaces of semistable vector bundles that are birational to each other and should be related via wall-crossings. 
As the notion of stability changes, the stability of a vector bundle $E$ on $\Cc$ should be viewed as interpolating between the stability properties of the corresponding bundle $\pi_*E$ on $C$ and the properties of the graded vector spaces at the stacky points. 
For a numerical invariant $\alphab$, one can naturally define an $\alphab$-slope which is given by the Euler pairing over the rank: $\mu_\alphab(E) = \langle \alphab, E \rangle /\rk E $ (modulo a shift by a constant).
Then $\alphab$-semistability is given by checking an inequality of $\alphab$-slopes for all subbundles.

Vector bundles on stacky curves are very closely related to parabolic bundles on curves (which are vector bundles with prescribed flags in fibres over certain parabolic points). Mehta and Seshadri \cite{mehta-seshadri:1980} constructed moduli spaces of parabolic vector bundles using geometric invariant theory, where different choices of linearisation in this construction give rise to different notions of stability depending on a notion of parabolic weights, with a rich theory of variation of stability. A generalisation of this approach to generically split parahoric bundles has been carried out by Balaji and Seshadri in \cite{balaji.seshadri:2015}. 
We refer to \cref{rmk relation with parabolic VBs} for a more detailed discussion; in particular, our methods give a new construction of moduli spaces of parabolic vector bundles.

In this paper, we construct good moduli spaces of $\alphab$-semistable vector bundles on a stacky curve by applying the recent existence criteria of Alper, Halpern-Leistner and Heinloth \cite{AHLH:existence}. Using a Langton-type argument, it is easy to verify that this good moduli space is proper. The heart of this paper is showing that there is an ample determinantal line bundle on the good moduli space to prove it is projective, and in particular a scheme. We give intrinsic, moduli-theoretic ways to construct lots of sections of this determinantal line bundle by adapting ideas of Faltings \cite{faltings:1993} and others (see the discussion of related works below). Our central motivation for doing this is that although GIT automatically proves the existence of projective moduli spaces, the projective embedding is not at all explicit, as computing rings of invariants is in general extremely tricky. In fact, often the only reason one can give moduli-theoretic interpretations of GIT semistability is via the Hilbert--Mumford criterion. However, not knowing the ring of invariants, means we can say relatively little about the line bundle we obtain on the GIT quotient or which power of it yields a projective embedding. Our intrinsic approach will provide an explicit description of the sections of this line bundle and its powers, moreover we give effective bounds for a power of the determinantal line bundle to be base point free and to define a finite map to a projective space.

\subsection*{Our results}

Let $\Mstackss$ denote the stack of $\alphab$-semistable vector bundles on a smooth proper stacky curve $\Cc$ over a field $k$ with fixed numerical invariant $\betab$.
We assume that $\alphab$ is a generating numerical invariant (see \cref{definition: generating sheaves thru local condition}), which means the multiplicities of $\alphab$ at each stacky point are positive; as the name suggests, this notion is closely related to the notion of generating sheaves (see \cite{olsson-starr:2003}).
This condition on $\alphab$ ensures that the algebraic stack $\Mstackss$ is of finite type over $k$.

Using the universal bundle $\mathcal{U}_\betab \rightarrow C \times \Bun_\betab$ on the stack $\Bun_\betab$ of all vector bundles on $\Cc$ with numerical invariant $\betab$ and any vector bundle $V$ on $\Cc$, we can construct a determinantal line bundle
\[\mathcal{L}_V \coloneqq \det \left( \Rb (\mathrm{pr}_{\Bun_\betab})_* \localHom(\mathrm{pr}^*_{\Cc} V, \Uc_\betab)  \right)^{\vee}\]
on $\Bun_\betab$.
If $\langle [V], \betab \rangle = 0$, then there is also a section $\sigma_V$ of $\mathcal{L}_V$ such that $\sigma_V(E)  \neq 0$ if and only if $\Hom(V,E) = 0$ for any $E$ in $\Bun_\beta$.
While $\mathcal{L}_V$ only depends on the class $[V]$ of $V$ in the Grothendieck group $\Knot(\Cc)$, the section \emph{does} depend on $V$, and so by varying $V$ we can produce many sections of $\mathcal{L}_{[V]}$.
By modifying $\alphab$, but without changing the notion of stability, we can assume that $\langle \alphab, \betab \rangle=0$ (see \cref{lemma how to modify to orthogonal alpha}).
Fix $[V] \in \Knot(\Cc)$ whose underlying numerical class is $\alphab$ and let $\mathcal{L}_\alphab$ denote the restriction of $\mathcal{L}_{[V]}$ to $\Mstackss$. Our first result concerns this line bundle in arbitrary characteristic.

\begin{mainthm}[Semiampleness and effective bounds]\label{ThmA}
Let $\alphab$ be a generating numerical invariant such that $\langle \alphab, \betab \rangle = 0$. Then the determinantal line bundle $\mathcal{L}_\alphab$ on the stack $\Mstackss$ is semiample. More precisely, if $m $ is a positive integer and
\[ m > (g_{\Cc} -1)(\rank \betab)^2, \]
then $\mathcal{L}_\alphab^{\otimes m}$ is basepoint-free.
\end{mainthm}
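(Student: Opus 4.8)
The plan is to prove basepoint-freeness by producing, for each $\alphab$-semistable bundle $E$ with numerical invariant $\betab$, a class $[V]\in\Knot(\Cc)$ with underlying numerical invariant $m\alphab$ such that the section $\sigma_V$ of $\mathcal{L}_\alphab^{\otimes m}$ does not vanish at $E$; equivalently, by the stated non-vanishing criterion, such that $\Hom(V,E)=0$ and $\Hom(E,V)$-type obstructions vanish, i.e. $V$ is a ``test object'' orthogonal to $E$ in the derived sense. Concretely, I would look for $V$ of the form $V \cong W\otimes F$ or a direct sum of shifts of a single semistable bundle $W$ of suitable slope, chosen so that $\langle [V],\betab\rangle = 0$, $\rk$ and multiplicities of $[V]$ equal $m\alphab$, and $\RHom(V,E)=0$. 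The key point is that if $W$ is $\alphab$-semistable with $\mu_\alphab(W)$ chosen just above $\mu_\alphab(E)$ (or arranged via the shift so the Euler pairing vanishes), then any nonzero map $W\to E$ would violate semistability of $E$, and dually for $\Ext^1$, so that $\RHom$ vanishes for \emph{generic} such $W$ of the right invariants. So the crux is an existence statement: for $m$ in the stated range there exists a single vector bundle $V$ on $\Cc$ with the prescribed numerical invariants admitting no nonzero homomorphism to (and no extension by) $E$.

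The main technical step, and the expected main obstacle, is this existence/dimension count. I would follow the Faltings-type strategy: consider the stack (or a suitable parameter space) of bundles $V$ with numerical invariant $m\alphab$ and the incidence locus of those $V$ with $\RHom(V,E)\neq 0$; one shows this locus has positive codimension as soon as the Euler characteristic $\chi(V,E)=\langle m\alphab,\betab\rangle$ is zero and the dimension of the space of $V$'s is large enough, which is where the inequality $m>(g_{\Cc}-1)(\rk\betab)^2$ enters. Here $g_{\Cc}$ is the (stacky) genus, and the bound arises by comparing the dimension of the family of $V$'s — which grows like $m^2$ times something controlled by $\rk\betab$ and $\chi(\Oc_\Cc)= 1-g_\Cc$ via Riemann--Roch on $\Cc$ — against the dimension of the ``bad'' locus where a nonzero map $W\hookrightarrow$ (or $\twoheadrightarrow$) destabilises. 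One must be careful that $V$ itself need not be semistable; it suffices to build $V$ as an iterated extension of stable pieces of controlled slopes, using that stable bundles of any fixed invariant with slope in a prescribed range exist in abundance on $\Cc$ (a stacky analogue of the classical existence results, which should be available from the earlier sections or from Riemann--Roch plus a genericity argument).

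Granting the existence of such a $V$ for a fixed $E$, one still has to upgrade pointwise non-vanishing to basepoint-freeness on the stack: I would argue that the same $V$ works in a neighborhood, since $\RHom(V,\blank)$ vanishing is an open condition on $\Bun_\betab$, so $\sigma_V$ is nonvanishing on an open substack containing $[E]$; since $\Mstackss$ is of finite type (using that $\alphab$ is generating), finitely many such sections cover it, and any one of them realises the given point. Finally, to see these are sections of $\mathcal{L}_\alphab^{\otimes m}$ rather than of $\mathcal{L}_{m\alphab}$ for some unrelated class, one uses that $\mathcal{L}_V$ depends only on $[V]\in\Knot(\Cc)$ and that, for $[V]$ with numerical class $m\alphab$ built from pieces numerically equal to $\alphab$, the determinantal line bundle is isomorphic to $\mathcal{L}_\alphab^{\otimes m}$ — this is a formal consequence of additivity of $\det\Rb(\mathrm{pr})_*\localHom(\blank,\Uc_\betab)$ in the first variable, since $\langle\alphab,\betab\rangle=0$ kills the correction terms. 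The one subtlety I would flag is matching the multiplicities at \emph{all} stacky points simultaneously while keeping $\chi$ zero; this is a finite system of linear/congruence conditions on the discrete invariants of the building blocks, and the generating hypothesis on $\alphab$ (positivity of multiplicities) is exactly what guarantees it is solvable for $m$ large, with the stated bound coming from the Riemann--Roch dimension estimate.
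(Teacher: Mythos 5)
Your proposal follows essentially the same route as the paper: for each semistable $E$ one shows that a \emph{general} $V$ with invariant $m\alphab$ (and fixed determinant) satisfies $\Hom(V,E)=0$, via an Esteves--Popa style codimension estimate on the locus of bad $V$, and then openness plus quasi-compactness of $\Mstackss$ yields basepoint-freeness; note that $\Ext^1(V,E)=0$ is automatic from $\Hom(V,E)=0$ once $\langle m\alphab,\betab\rangle=0$, so no separate dual argument is needed. A few places where your version diverges or would need repair: (1) you propose to \emph{construct} $V$ as an iterated extension of stable bundles of prescribed invariants, invoking abundance of stable bundles on $\Cc$ --- this input is not established (the paper even has to \emph{assume} non-emptiness of $\Mstackss$ for its irreducibility statement) and is unnecessary: the paper's dimension count takes place in the irreducible stack $\Bun_{m\alphaw}$ of \emph{all} bundles with fixed algebraic invariant, whose non-emptiness follows from positivity of $m\alphab$ (a sum of line bundles suffices), so no stability of $V$ enters. (2) The numerical bound does not come from comparing the $m^2$-growth of $\dim\Bun_{m\alphab}$ against the bad locus; the codimension of the stratum of $V$'s admitting a map with image of invariant $\gammab$ is $\langle\gammab,\gammab\rangle-m\langle\alphab,\gammab\rangle-\langle\etab,\gammab\rangle$, which is linear in $m$ with slope $-\langle\alphab,\gammab\rangle\geq 1$, and the stated bound is exactly $\max_\gammab(-\langle\gammab,\gammab\rangle)\leq(g_\Cc-1)(\rank\betab)^2$ over the finitely many invariants $\gammab$ of destabilising images (this self-pairing estimate for positive invariants is the real source of the constant). (3) Fixing the determinant of $\pi_*V$ is genuinely necessary, not a formality killed by $\langle\alphab,\betab\rangle=0$: the line bundle $\Lc_V$ depends on $\det\pi_*V$ through a twist by restrictions of the universal bundle, so one works with an algebraic invariant $m\alphaw$ throughout. (4) The dimension count directly handles $\Hom$-vanishing only for \emph{stable} $E$ (one needs every nonzero map to be surjective); semistable $E$ is reached by applying the stable case to the Jordan--H\"older subquotients, a reduction your sketch omits.
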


In fact, we show that $\alphab$-semistability of $E \in \Bun_\betab$ is equivalent to the existence of a vector bundle $V$ with invariant $m\alphab$ (and fixed determinant) such that $\Hom(V,E) = 0$ (see \cref{proposition:semistability equivalent to Hom-vanishing}). The trickier forward implication is proved by a dimension count similar to that of Esteves \cite{esteves:1999} and Esteves--Popa \cite{esteves-popa:2004}. \cref{ThmA} is proved in \cref{theorem:semiample}.

For our main result, we assume that $\car(k) = 0$ to apply the existence criterion of \cite{AHLH:existence} and obtain a proper good moduli space. We use the sections $\sigma_V$ to show that we can separate enough vector bundles to prove that the induced line bundle in ample. For classical (i.e.\ non-stacky) curves, this was carried out in \cite{alper-etal:2022} based on the ideas of Faltings \cite{faltings:1993}, Esteves \cite{esteves:1999} and Esteves--Popa \cite{esteves-popa:2004}. Our extension of these ideas to stacky curves actually streamlines many proofs (for example, working with $\Ext$ groups rather than cohomology, means we naturally avoid many duals appearing in the proof of \cite[Proposition 5.4]{alper-etal:2022}). Let us assume that $\Mstackss$  is non-empty for the irreducible statement in the following result.

\begin{mainthm}[Existence of projective good moduli spaces]\label{ThmB}
Assume $\car(k) = 0$ and $\alphab$ is a generating numerical invariant such that $\langle \alphab, \betab \rangle = 0$.
Then $\Mstackss$ admits a proper good moduli space  $\Mspacess$ which is irreducible and normal.
Moreover, the determinantal line bundle $\mathcal{L}_\alphab$ on the stack $\Mstackss$ descends to an ample line bundle $L_\alphab$ on $\Mspacess$.
Hence, $\Mspacess$ is a projective scheme.
\end{mainthm}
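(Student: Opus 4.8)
The plan is to establish the three assertions in order: first the existence of a proper, irreducible, normal good moduli space, then the descent of $\mathcal{L}_\alphab$ to an ample line bundle $L_\alphab$, and finally projectivity.

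For the existence, I would apply the criterion of \cite{AHLH:existence} to $\Mstackss$, which is of finite type over $k$ and has affine diagonal by the generating hypothesis on $\alphab$. Since $\car(k)=0$, the stabilisers (at closed points, products of general linear groups) are linearly reductive, so it remains to check $\Theta$-reductivity and $S$-completeness. Both are valuative conditions comparing a semistable family over $\Theta_R$, respectively $\ST_R$, with a family over the punctured base, for $R$ a DVR, and I would deduce both from a Langton-type semistable reduction argument: starting from an arbitrary extension across the central fibre, one performs elementary modifications supported there — legitimate because $\Cc$ is tame, so these behave as on a scheme — until reaching a semistable model, whose uniqueness follows from separatedness of good moduli spaces. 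This yields a good moduli space $\pi\colon \Mstackss \to \Mspacess$, an algebraic space, and the same argument verifies the valuative criterion of properness for $\Mspacess$ over $k$. Finally, $\Bun_\betab$ is smooth and irreducible and $\Mstackss \subseteq \Bun_\betab$ is open and (by hypothesis) nonempty, hence smooth and irreducible; as good moduli spaces inherit irreducibility and normality, $\Mspacess$ is irreducible and normal.

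For the descent, I would use the descent criterion for good moduli spaces (as in \cite{alper-etal:2022}): $\mathcal{L}_\alphab$ descends to $\Mspacess$ provided the stabiliser of every closed point of $\Mstackss$ acts trivially on the fibre. The closed points are the $\alphab$-polystable bundles $E=\bigoplus_i E_i^{\oplus m_i}$, with each $E_i$ stable of the same $\alphab$-slope as $E$ and $\Aut(E)=\prod_i\mathrm{GL}_{m_i}$, and $\Aut(E)$ acts on $\mathcal{L}_\alphab|_E=\det\RHom_{\Cc}(V,E)^{\vee}$ through the character $(g_i)_i\mapsto\prod_i\det(g_i)^{-\langle[V],[E_i]\rangle}$. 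Since $\langle[V],[E_i]\rangle=\langle\alphab,[E_i]\rangle$ depends only on numerical classes and $E_i$ has the same $\alphab$-slope as $E$, this pairing is a positive multiple of $\langle\alphab,\betab\rangle=0$ and hence vanishes; so $\mathcal{L}_\alphab$ descends to a line bundle $L_\alphab$ on $\Mspacess$, and its powers descend as well. By the additivity of the determinantal construction, the sections $\sigma_V$ of \cref{proposition:semistability equivalent to Hom-vanishing} (for $V$ of invariant $m\alphab$ and the fixed determinant) are global sections of $\mathcal{L}_\alphab^{\otimes m}$ on $\Mstackss$, and therefore descend to global sections of $L_\alphab^{\otimes m}$ on $\Mspacess$.

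For ampleness, I would fix $m>(g_{\Cc}-1)(\rk\betab)^2$. By \cref{ThmA}, $\mathcal{L}_\alphab^{\otimes m}$ is basepoint-free on $\Mstackss$, and by \cref{proposition:semistability equivalent to Hom-vanishing} this is witnessed by the sections $\sigma_V$ above. Since $\pi$ is surjective and $H^0(\Mspacess,L_\alphab^{\otimes m})=H^0(\Mstackss,\mathcal{L}_\alphab^{\otimes m})$, the descended sections make $L_\alphab^{\otimes m}$ basepoint-free and define a morphism $\varphi\colon\Mspacess\to\PP^N$ with $\varphi^*\Oc(1)\cong L_\alphab^{\otimes m}$. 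The hard part — and the main obstacle — is to show that $\varphi$ is quasi-finite, equivalently that two distinct closed points $E\not\cong E'$ of $\Mstackss$ are separated by some $\sigma_V$, i.e. that there is a vector bundle $V$ of invariant $m\alphab$ with the fixed determinant such that $\Hom(V,E)=0$ but $\Hom(V,E')\neq0$. Following Faltings \cite{faltings:1993}, Esteves \cite{esteves:1999} and Esteves--Popa \cite{esteves-popa:2004}, and the classical-curve treatment in \cite{alper-etal:2022}, I would construct such a $V$ by a dimension count in the spirit of \cref{ThmA}: choosing the underlying bundle on the coarse curve $C$ so as to detect the difference between $E$ and $E'$ away from the stacky points, and the multiplicities at the stacky points so as to detect the difference of the graded fibres there. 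Granting quasi-finiteness, $\varphi$ is also proper (as $\Mspacess$ is proper over $k$), hence finite; therefore $\Mspacess$ is a scheme, finite over $\PP^N$, the line bundle $L_\alphab^{\otimes m}=\varphi^*\Oc(1)$ is ample, and so $L_\alphab$ is ample and $\Mspacess$ is a projective scheme.
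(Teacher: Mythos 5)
Your overall architecture matches the paper's (existence via \cite{AHLH:existence}, properness via Langton, semiampleness via Hom-vanishing, then finiteness of the induced map to projective space), and two of your steps are fine, one of them by a genuinely different route. For the descent of $\mathcal{L}_\alphab$, you check that the stabiliser $\Aut(E)=\prod_i\mathrm{GL}_{m_i}$ of a polystable point acts on the fibre through $\prod_i\det(g_i)^{-\langle\alphab,[E_i]\rangle}$ and that each exponent vanishes because the stable summands have the same $\alphab$-slope as $E$; this is correct and more direct than the paper, which instead descends $\mathcal{L}_\alphab^{\otimes m}$ for two consecutive large $m$ via the universal property of the good moduli space applied to the basepoint-free linear systems, and sets $L_\alphab=L_{m+1}\otimes L_m^{-1}$. (A smaller quibble: for $\Theta$- and S-completeness the relevant mechanism is not elementary modification — that is the Langton argument for properness — but rather extending the filtration, resp.\ the pair of lattices, in the stack $\Coh$, which is $\Theta$- and S-complete, and then checking that the associated graded of the special fibre is still $\alphab$-semistable using that subsheaves flat over $R$ have constant $\alphab$-slope.)

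The genuine gap is in the ampleness step, exactly where you flag "the hard part". You propose to produce, for polystable $E\not\cong F$, a bundle $V$ of invariant $m\alphaw$ with $\Hom(V,E)=0$ and $\Hom(V,F)\neq 0$ "by a dimension count in the spirit of \cref{ThmA}". This cannot work as stated: a dimension count shows that a \emph{general} $V$ satisfies $\Hom(V,E)=0$, but the same count shows a general $V$ also satisfies $\Hom(V,F)=0$, since $F$ is semistable too. The condition $\Hom(V,F)\neq 0$ is a special (closed) condition, so genericity arguments can never produce the required $V$; one needs an explicit construction. The paper's \cref{prop:separating polystables} does this by a Hecke modification: one first takes a general $V_0$ of invariant $m\alphaw-\deltaw$ (so that $\Ext^1(V_0,E_i)=0$ and every non-zero map to a stable summand, or to a sum of two non-isomorphic ones, is surjective, via \cref{lemma:construction of V}), fixes a surjection $\phi\colon V_0\twoheadrightarrow F_1$ with kernel $K$, and then builds $H$ as an extension $0\to V_0\to H\to\Oc_x\to 0$ classified by a line $L\subset (V_0)_x\cong\Ext^1(\Oc_x,V_0)^{\vee}$. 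The point is to choose $L$ inside $K_x$ (so that $\phi$ extends to $H$) but avoiding, for each $i$, the Schubert variety of quotients $K_x\twoheadrightarrow (E_i)_x$ killing $L$; a Bertini--Kleiman argument comparing $\dim\mathbb{P}(\Hom(V_0,E_i))=r_i-1$ with the codimension $r_i$ of the Schubert variety shows a general $L$ works, and then no non-zero $\psi_i\colon V_0\to E_i$ extends to $H$, giving $\Hom(H,E)=0$ and $\Hom(H,F)\neq 0$. Without this (or an equivalent) construction, the finiteness of $\varphi$ — and hence ampleness — is not established. Note also that the paper does not need to separate \emph{all} pairs of distinct closed points (which your "equivalently" suggests and which would be injectivity rather than quasi-finiteness): it argues by contradiction on a contracted curve, using that only finitely many polystable bundles share a given set of stable summands, so it suffices to separate $E$ and $F$ when some stable summand of $F$ occurs in no decomposition of $E$.
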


This is proved in \cref{cor:langton} and \cref{theorem: det is ample}. The only reason we assume $\car(k) = 0$ is to apply the existence criterion and we could conclude the same in positive characteristic if we knew an adequate moduli space existed.
In fact, using a GIT construction (for example as in \cite{nironi:2009}), one can show $\Mstackss$ is locally reductive and thus admits an adequate moduli space. Additionally we obtain an effective bound for which power of $L_\alpha$ gives a finite morphism to a projective space (see \cref{cor:eff bdd for finite map to Pn}).

\subsection*{Related work}\label{sec: related works}

Let us very briefly give an overview of the literature concerning moduli of vector bundles on curves, before turning to the case for stacky curves.

Moduli spaces of stable vector bundles on a curve $C$ were first constructed over the complex numbers by Seshadri \cite{seshadri:1967} using their relation with irreducible (twisted) representations of the unitary group \cite{NS:1965}. This notion of stability coincided with the one coming from Mumford's geometric invariant theory \cite{mumford:GIT}. This gave rise to a projective moduli space of semistable vector bundles on $C$, which identifies S-equivalent bundles. The projective GIT quotient naturally comes with an ample line bundle, which in this case can be interpreted by taking the determinant of cohomology. Using this perspective, Faltings \cite{faltings:1993} gave an intrinsic proof that this determinantal line bundle was ample; these ideas are nicely elaborated by Seshadri \cite{Seshadri:VB}. Esteves and Popa \cite{esteves:1999,esteves-popa:2004} gave a more modern treatment that proved semiampleness by a dimension count and separates points using these ideas and dually Ext-vanishing results, together with the study of certain Schubert varieties in Grassmannians. In \cite{alper-etal:2022}, the existence criteria and these ideas were used to give a proof of projectivity that goes beyond GIT.

Let us now turn to the case of stacky curves, which have been studied in \cite{behrend-noohi:2006,voight-ZB:2022}, and vector bundles on stacky curves and their moduli is studied in the thesis of the fourth author \cite{Lisanne}. Moduli of semistable sheaves on Deligne--Mumford stacks were studied in the preprint of Nironi \cite{nironi:2009}, where he generalised the GIT-type construction of moduli of sheaves on schemes using Quot schemes. Whilst we were completing this paper, a related paper appeared \cite{das-majumder} constructing projective moduli spaces for semistable vector bundles on \emph{wild} orbifold curves in positive characteristic. However they only consider a single specific notion of stability that is well-behaved with respect to pullback along covers. Their strategy is then to choose a nice schematic cover of their orbifold curve and to embed their moduli space into the moduli space of vector bundles on the cover, which is known to be projective. We on the other hand consider a wide range of stability conditions, which are in general not well behaved with respect to pullback along covers, so their methods do not seem to apply.

This paper fits more broadly into the \emph{beyond GIT} program developed by Alper, Halpern--Leistner and Heinloth, and there are several related papers in this direction. One major breakthrough of these ideas concerns the construction of good moduli spaces of $K$-semistable Fano varieties \cite{alper-blum-etal:2020}. There are stacks project papers giving stack-theoretic constructions and projectivity proofs for moduli spaces of stable curves \cite{cheng-etal} and moduli spaces of semistable vector bundles on a curve \cite{alper-etal:2022}. Furthermore, the first three authors together with Belmans, Franzen and Tajakka gave a beyond GIT proof of projectivity of moduli spaces of representations of an acyclic quiver \cite{BDFHMT}.

\subsection*{Acknowledgements}
The authors are grateful to Jochen Heinloth, David Rydh, Dario Wei{\ss}mann for helpful discussions; in addition, Dario pointed out the preprint \cite{das-majumder} to the authors.
The authors would like to thank the Isaac Newton Institute for Mathematical Sciences, Cambridge, for support and hospitality during the programme New equivariant methods in algebraic and differential geometry, where work on this paper was undertaken. This work was supported by EPSRC grant EP/R014604/1.
S.M. was a visiting researcher at Universit\"at Duisburg-Essen in 2023--2024 and would like to thank it and Marc Levine for the hospitality.

% ---
\section{Vector bundles on stacky curves}
% ---

In this first section we aim to collect the definitions and results concerning stacky curves, including the appropriate reformulation of the Riemann--Roch Theorem and of stability for vector bundles. We refer the reader to \cite{Lisanne} for details and proofs. Throughout, we will work over a field $k$.

\subsection{Stacky curves}

A \textbf{stacky curve} is a smooth finite type geometrically connected proper Deligne--Mumford stack $\Cc$ of dimension $1$ over a field $k$, such that there exists a scheme $X$ and an open immersion $X \rightarrow \Cc$. If $\Cc$ is a stacky curve, we denote its associated coarse moduli space by $\pi \colon \Cc \to C$, which is a morphism to a smooth projective curve $C$. If $\pi_*$ is exact on quasi-coherent sheaves, then we say that the stacky curve $\Cc$ is \textbf{tame}. In this case, $\pi \colon \Cc \to C$ is in fact a good moduli space.
We note that this is always the case if $\car{k}=0$.

We say that a closed point $p$ of a stacky curve $\Cc$ is a \textbf{stacky point} if it has a non-trivial stabiliser group $G_p \coloneq \isom{p}{p}$. 
We define the \textbf{residual gerbe} of $p$ to be the unique reduced closed substack of $\Cc$ supported on $p$ and we denote it by $\Gc_p$. The \textbf{order} of $p$, denoted $e_p$, is the unique positive integer such that there exists an isomorphism $\Gc_p \cong B\mu_{e_p}$.

Throughout we fix a tame stacky curve $\Cc$ and denote the set of stacky points of $\Cc$ by $\stackypoints$ and the coarse moduli space by $\pi \colon \Cc \to C$.

\subsection{Vector bundles on stacky curves} \label{sec:invariants-vbs}

Let $E$ be a vector bundle on $\Cc$.
At each stacky point $p$ of order $e_p$, fixing an isomorphism $\Gc_p \cong B\mu_{e_p}$, defines an the inclusion $\iota_p \colon B\mu_{e_p} \rightarrow \Cc$.
Then the restriction $\iota_p^*E$ defines a $\ZZ/e\ZZ$-graded vector space
\begin{equation} \label{eq:i*E} \iota_p^*E \cong \bigoplus_{i \in \ZZ/e\ZZ}^{e_p-1} \kappa(p)^{m_{p,i}(E)}.\end{equation}
The numbers $m_{p,i}(E)$ are called the \textbf{multiplicities at $p$} of $E$ the \textbf{multiplicity vector at $p$} is defined as\[ m_p(E) \coloneqq (m_{p, 0}(E), \cdots, m_{p, e_p - 1}(E)) \in \NN^{e_p}.\] If $E$ is fixed we will denoted these simply by $m_{p,i}$ and $m_p$.
Finally the collection of all the multiplicity vectors $m_p(E)$ for every stacky point $p$ is called the \textbf{multiplicities} of $E$ and denoted by $\m(E)$ or simply $\m$ if $E$ is understood. 
Since we can resolve any coherent sheaf $F$ by a complex of two vector bundles  $E_1 \to E_0$, we define (virtual) multiplicities for $F$ as $\m (F) = \m (E_0) - \m (E_1)$.
Note that multiplicities of torsion sheaves may be negative, unlike the definition of multiplicities for torsion sheaves in \cite{Lisanne}, which does not factor through the Grothendieck group.

\begin{lemma}
[{\cite[Cor. 1.2.12]{Lisanne}}]
\label{lem:picC}
Let $\stackypoints=\{p_1, \dots, p_n\}$ and set  $e_j:=e_{p_j}$. Write $x_i:= \pi(p_i)$ and let $\pic{\Cc}$ denote the (set-theoretic) Picard group of $\Cc$. The map
\[\pic{C} \oplus \bigoplus_{j=1}^n \ZZ x_j
    \to \pic{\Cc}, \quad {(L,x_j)} \mapsto \pi^* L \otimes \Oc_\Cc\left(\frac{1}{e_j}p_j\right)\] induces a short exact sequence of abelian groups
\[
    0 \longrightarrow \bigoplus_{j=1}^n \ZZ  (e_j x_j - \Oc_C (p_j))
    \longrightarrow \pic{C} \oplus \bigoplus_{j=1}^n \ZZ x_j
    \longrightarrow \pic{\Cc} \longrightarrow 0 .
\]
\end{lemma}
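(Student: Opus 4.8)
The plan is to understand $\pic{\Cc}$ via the coarse moduli map $\pi\colon\Cc\to C$. First I would recall that since $\Cc$ is a tame DM stack with coarse space $C$, and $\Cc$ is generically a scheme, $\pi$ is an isomorphism over the open complement $U = C\setminus\{x_1,\dots,x_n\}$, and over a formal/étale neighbourhood of each $x_j$ the stack $\Cc$ is a root stack: $\Cc\times_C \Spec\Oc_{C,x_j}^{\mathrm{h}} \cong \sqrt[e_j]{\Oc_C(-x_j)}$ near $x_j$ (this structure theory for tame stacky curves is in \cite{Lisanne}, and is the source of the notation $\Oc_\Cc(\tfrac1{e_j}p_j)$, a chosen $e_j$-th root of $\pi^*\Oc_C(x_j)$). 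The two basic facts I need are: (i) every line bundle on $\Cc$ is, after twisting by a suitable product $\bigotimes_j \Oc_\Cc(\tfrac{a_j}{e_j}p_j)$ with $0\le a_j < e_j$, pulled back from $C$; and (ii) the only relations among the generators $\pi^*L$ (for $L\in\pic C$) and the $\Oc_\Cc(\tfrac1{e_j}p_j)$ come from the identity $\Oc_\Cc(\tfrac1{e_j}p_j)^{\otimes e_j}\cong \pi^*\Oc_C(x_j)$, i.e.\ from $e_j x_j \mapsto \Oc_C(p_j)$ in the stated map. Granting these, surjectivity of the displayed map is immediate from (i), and the kernel computation is (ii).

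To prove surjectivity concretely: given a line bundle $M$ on $\Cc$, the multiplicity $m_{p_j}(M)$ at each stacky point is a single integer $a_j(M)\in\ZZ/e_j\ZZ$ (the character by which $\mu_{e_j}$ acts on the fibre). Twisting by $\Oc_\Cc(\tfrac{1}{e_j}p_j)$ shifts this character by $1$, so after twisting by an appropriate $\bigotimes_j\Oc_\Cc(\tfrac{a_j}{e_j}p_j)$ we may assume all multiplicities of $M$ are trivial; then $\pi_* M$ is a line bundle on $C$ and the adjunction unit $\pi^*\pi_*M\to M$ is an isomorphism (this is a standard fact for tame stacks with a coarse space when the sheaf has trivial stabilizer action fibrewise — it can be checked étale-locally, reducing to the root-stack model where it is explicit). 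This exhibits $M$ in the image.

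For the kernel: an element of $\pic C\oplus\bigoplus_j\ZZ x_j$ is a pair $(L,(c_j)_j)$ mapping to $\pi^*L\otimes\bigotimes_j\Oc_\Cc(\tfrac{c_j}{e_j}p_j)$. If this is trivial on $\Cc$, then comparing multiplicities at $p_j$ forces $c_j\equiv 0\pmod{e_j}$, say $c_j = e_j b_j$; then $\bigotimes_j\Oc_\Cc(\tfrac{c_j}{e_j}p_j) = \bigotimes_j\pi^*\Oc_C(b_j x_j) = \pi^*\Oc_C(\sum_j b_j x_j)$, so the bundle is $\pi^*(L\otimes\Oc_C(\sum b_j x_j))$, and since $\pi^*$ is injective on Picard groups (as $\pi_*\pi^* \cong \mathrm{id}$) we get $L\otimes\Oc_C(\sum b_j x_j)\cong\Oc_C$, i.e.\ $L\cong\Oc_C(-\sum b_j x_j)$. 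Hence $(L,(c_j)_j) = \sum_j b_j\bigl(e_j x_j - \Oc_C(p_j)\bigr)$ lies in the span of the stated generators; conversely each generator $e_j x_j - \Oc_C(p_j)$ visibly maps to the trivial bundle. Finally these $n$ generators are $\ZZ$-linearly independent in $\pic C\oplus\bigoplus_j\ZZ x_j$ because their components in the free summand $\bigoplus_j\ZZ x_j$ are $e_j x_j$, which are independent. This gives exactness at all three spots.

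The main obstacle, and the only genuinely non-formal input, is the local structure statement — that a tame stacky curve is étale-locally a root stack of the coarse curve at its stacky points — together with the consequence that twisting by $\Oc_\Cc(\tfrac1{e_j}p_j)$ increments the multiplicity character at $p_j$ by exactly one and leaves the others fixed; once this is in hand everything else is bookkeeping with $\pi^*$ and $\pi_*$ and their adjunction. Since the statement is cited as \cite[Cor.~1.2.12]{Lisanne}, I would in practice defer the local structure theory to that reference and present only the bookkeeping above.
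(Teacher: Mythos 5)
The paper gives no proof of this lemma --- it is quoted verbatim from \cite[Cor.~1.2.12]{Lisanne} --- so there is nothing internal to compare against; your argument is the standard one and is correct. The decomposition of a line bundle as $\pi^*L\otimes\bigotimes_j\Oc_\Cc(\tfrac{a_j}{e_j}p_j)$ with the character at $p_j$ shifted by exactly one under twisting by $\Oc_\Cc(\tfrac{1}{e_j}p_j)$, descent of a line bundle with trivial stabiliser actions along the coarse map of a tame stack, and injectivity of $\pi^*$ via $\pi_*\pi^*\cong\mathrm{id}$ are precisely the inputs the paper itself relies on elsewhere (e.g.\ in the proof of \cref{proposition:bound for Euler pairing}), and your kernel computation and independence check are complete. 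Deferring the local root-stack structure to the cited thesis is appropriate, as that is the only non-formal ingredient.
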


Let $\Knot(\Cc)$ and $\Knum(\Cc)$ denote the Grothendieck group and numerical Grothendieck group of $\Cc$ respectively. Note that the rank and the determinant define a map  $\Knot(\Cc) \to \ZZ \oplus \pic{\Cc}$.

Further, we observe that for a coherent sheaf $F$ the multiplicities at each stacky point always add up to the rank by construction, as the same is true for vector bundles and both the rank and multiplicities are additive in short exact sequences.

\begin{corollary}
[{\cite[Theorem 1.2.27]{Lisanne}}]
	The assignment $E \mapsto (\rk(E), \det \pi_*(E), (m_{p,i}(E)))$ induces an embedding
    \[
        \Knot(\Cc) \hookrightarrow \ZZ \oplus \pic{C} \oplus
        \bigoplus_{p \in \stackypoints} \ZZ^{e_p},
    \]
    whose image coincides with the subgroup of those $(r,L,(m_{p,i}))$ such that for each $p$, we have $r = m_{p,1} + \dots + m_{p,e_p}$, and thus there is an isomorphism
    \[
        \Knot(\Cc) \cong \ZZ \oplus \pic{C} \oplus
    	\bigoplus_{p \in \stackypoints} \ZZ^{e_p - 1}.
     \]
     Taking degrees $\deg \colon \pic{C} \rightarrow \ZZ$ we get a corresponding embedding
    \[
        \Knum(\Cc) \hookrightarrow \ZZ \oplus \ZZ \oplus
        \bigoplus_{p \in \stackypoints} \ZZ^{e_p},
    \]
    and isomorphism
    \[
        \Knum(\Cc) \cong \ZZ\oplus \ZZ \oplus
    	\bigoplus_{p \in \stackypoints} \ZZ^{e_p - 1}.
    \]
\end{corollary}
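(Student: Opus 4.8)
The plan is to construct the inverse of the map $\phi\colon[E]\mapsto(\rk E,\det\pi_*E,(m_{p,i}(E)))$ by using the $G$-theory localisation sequence to split $\Knot(\Cc)$ into a part pulled back from the coarse curve $C$ and a part supported on the residual gerbes, and then to compute $\phi$ on each piece via the projection formula and the self-intersection formula at the stacky points. That $\phi$ is a well-defined homomorphism to $\ZZ\oplus\pic{C}\oplus\bigoplus_p\ZZ^{e_p}$ is routine: $\rk$ is additive; since $\Cc$ is tame, $\pi_*$ is exact, so $[E]\mapsto[\pi_*E]$ descends to $\Knot(\Cc)\to\Knot(C)$ and we postcompose with $\det$; and since $\Cc$ is regular each $\iota_p\colon B\mu_{e_p}\cong\Gc_p\hookrightarrow\Cc$ is a regular closed immersion, so $L\iota_p^*$ induces $\Knot(\Cc)\to\Knot(B\mu_{e_p})\cong\ZZ^{e_p}$ (the last isomorphism because $\mu_{e_p}$ is diagonalisable, so a $k$-representation of it is just a $\ZZ/e_p\ZZ$-graded vector space), returning $m_p(E)$ of \eqref{eq:i*E} when applied to a vector bundle $E$; this also shows the virtual multiplicities $\m(E_0)-\m(E_1)$ are well-defined. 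The image condition $\rk=\sum_i m_{p,i}$ is exactly the additivity remark preceding the corollary.

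For bijectivity, let $U\coloneqq\Cc\setminus\stackypoints$, which is an open subscheme of $C$ via $\pi$, with open immersion $j_U\colon U\hookrightarrow\Cc$, and let $\iota\coloneqq\coprod_p\iota_p$ be the complementary closed immersion. The $G$-theory localisation sequence for the Noetherian Deligne--Mumford stack $\Cc$ reads
\[
  \bigoplus_p\Knot(B\mu_{e_p})\xrightarrow{\ \iota_*\ }\Knot(\Cc)\xrightarrow{\ j_U^*\ }\Knot(U)\to 0,
\]
and since $j_U^*\circ\pi^*$ is the surjective restriction $\Knot(C)\to\Knot(U)$ we get $\Knot(\Cc)=\pi^*\Knot(C)+\iota_*\big(\bigoplus_p\Knot(B\mu_{e_p})\big)$. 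On $\pi^*\Knot(C)$ the projection formula and $\pi_*\Oc_\Cc=\Oc_C$ give $\pi_*\pi^*\Fc=\Fc$, while $L\iota_p^*\pi^*\Fc$ is pulled back from the scheme point $\pi(p)$ (as $\pi\circ\iota_p$ factors through $\Spec\kappa(\pi(p))$), hence equals $\rk\Fc$ copies of the trivial character; thus $\phi(\pi^*\Fc)=(\rk\Fc,\det\Fc,(\rk\Fc)\,\varepsilon_0)$ with $\varepsilon_0=(1,0,\dots,0)$, so $\phi\circ\pi^*$ is injective because $(\rk,\det)\colon\Knot(C)\xrightarrow{\sim}\ZZ\oplus\pic{C}$. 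On a generator $\iota_{p*}\chi$ (for $\chi$ a character of $\mu_{e_p}$) the rank is $0$, $\det\pi_*\iota_{p*}\chi$ is $\Oc_C(\pi(p))$ if $\chi$ is trivial and $\Oc_C$ otherwise, and the self-intersection formula gives $[L\iota_p^*\iota_{p*}\chi]=[\chi]\,(1-[N^\vee])$, where $N^\vee$ is the conormal character of $\Gc_p$; the étale-local model $[\AA^1/\mu_{e_p}]$ shows $\mu_{e_p}$ acts faithfully on the one-dimensional tangent space (otherwise a positive-dimensional substack would acquire non-trivial generic stabiliser), so $[N^\vee]$ \emph{generates} $\widehat{\mu_{e_p}}\cong\ZZ/e_p\ZZ$. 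Since the vectors $\varepsilon_i-\varepsilon_{i+w}$ (indices mod $e_p$, with $w$ a generator) span the sum-zero sublattice of $\ZZ^{e_p}$ and satisfy the single relation $\sum_i(\varepsilon_i-\varepsilon_{i+w})=0$ — which lets one adjust the coefficient of the trivial character freely — a short bookkeeping argument, using $\phi(\pi^*\Fc)$ with $\rk\Fc=0$ to hit the $\pic{C}$-factor, shows $\phi$ surjects onto the claimed subgroup. For injectivity, if $\phi(\xi)=0$ then $j_U^*\xi\in\Knot(U)\cong\ZZ\oplus\pic{U}$ has rank and determinant $0$, so $\xi=\iota_*\zeta$; the vanishing $(1-[N^\vee])\,\zeta_p=\m(\xi)_p=0$ in $\Knot(B\mu_{e_p})$ forces $\zeta_p$ to be an integer multiple of the regular representation $\sum_i\chi^i$, and from $[\pi^*\kappa(\pi(p))]=[\Oc_\Cc]-[\Oc_\Cc(-e_p\Gc_p)]=\sum_{a=0}^{e_p-1}\iota_{p*}[(N^\vee)^{\otimes a}]=\iota_{p*}\big[\sum_i\chi^i\big]$ we conclude $\xi\in\pi^*\Knot(C)$, hence $\xi=0$; so $\phi$ is the asserted isomorphism onto the subgroup, and the second presentation of $\Knot(\Cc)$ follows by eliminating each $m_{p,0}$.

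The numerical statements follow by postcomposing with $\deg\colon\pic{C}\to\ZZ$, once the radical of the Euler pairing on $\Knot(\Cc)$ is identified with $\pi^*\{[M]-[\Oc_C]:\deg M=0\}\cong\pic{C}^0$: pairing a class against the $\iota_{p*}\chi^i$ recovers all its multiplicities, so these descend to $\Knum(\Cc)$; pairing against $[\Oc_\Cc]$ together with Riemann--Roch on $C$ forces $\deg\det\pi_*$ of a radical class to vanish; and conversely $\pi^*([M]-[\Oc_C])$ lies in the radical whenever $\deg M=0$, by adjunction and Riemann--Roch. Hence $\Knum(\Cc)\cong\ZZ\oplus(\pic{C}/\pic{C}^0)\oplus\bigoplus_p\ZZ^{e_p-1}\cong\ZZ^{2}\oplus\bigoplus_p\ZZ^{e_p-1}$. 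I expect the only real obstacle to be the local analysis at the stacky points — pinning down $[N^\vee]$ as a generator of $\widehat{\mu_{e_p}}$ and the identity $\iota_{p*}[\sum_i\chi^i]=\pi^*[\kappa(\pi(p))]$ — since everything else is either formal or reduces to the corresponding classical statement for the smooth curve $C$.
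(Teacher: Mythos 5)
Your argument is correct, and since the paper itself offers no proof of this corollary (it is quoted verbatim from the fourth author's thesis \cite{Lisanne}), there is nothing in the text to compare it against line by line. Your route --- dévissage via the $G$-theory localisation sequence for $U = \Cc\setminus\stackypoints$, computing $\phi$ separately on $\pi^*\Knot(C)$ and on the classes $\iota_{p*}\chi^i$ supported on the residual gerbes, with the self-intersection formula $[L\iota_p^*\iota_{p*}\chi]=[\chi](1-[N^\vee])$ and the identity $\iota_{p*}[\textstyle\sum_i\chi^i]=\pi^*[\kappa(\pi(p))]$ doing the local work --- is a clean and standard one; the thesis presumably argues more explicitly by filtrations, in the style of the proof of \cref{lem:picC}, but the two approaches buy essentially the same thing and yours has the virtue of isolating exactly where faithfulness of the $\mu_{e_p}$-action on the tangent space (equivalently, the existence of a dense open subscheme) enters, namely in making $[N^\vee]$ a generator of $\widehat{\mu_{e_p}}$ so that the classes $\varepsilon_i-\varepsilon_{i+w}$ span the sum-zero sublattice. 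I checked the two steps most likely to hide an error: the surjectivity bookkeeping does close up (the rank-zero pullbacks $\pi^*([M]-[\Oc_C])$ absorb the unwanted $\pic{C}$-contributions of the $\iota_{p*}\chi^i$), and the injectivity argument correctly identifies $\ker\bigl(\cdot(1-[N^\vee])\bigr)\subset\ZZ[\ZZ/e_p]$ with the multiples of the norm element, which is exactly what is pushed forward from $C$. Two cosmetic points worth tightening if this were written out in full: over a non-algebraically-closed field one should either invoke the paper's standing identification $\Gc_p\cong B\mu_{e_p}$ or carry the degree $[\kappa(p):\kappa(\pi(p))]$ through the computation of $\det\pi_*\iota_{p*}\chi$ (harmless, since your surjectivity argument corrects the $\pic{C}$-factor independently); and the well-definedness of $L\iota_p^*$ on all of $\Knot(\Cc)$ should be attributed to regularity of $\Cc$ (finite locally free resolutions), which you do state. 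Your identification of the radical of the Euler pairing with $\pi^*\{[M]-[\Oc_C]:\deg M=0\}$ is also the right statement and suffices for the numerical half.
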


A \textbf{numerical invariant} means a class in $\Knum(\Cc)$, which we typically denote by $\alphab,\betab,$ etc. By the above isomorphism, we can write $\alphab = (\rank \alphab, \deg \pi_*\alphab, (m_{p,i}(\alphab)))$. We use the terminology \textbf{algebraic invariant} to mean a class in $\Knot(\Cc)$, which we typically write as a pair $\alphaw = (\alphab,L)$ consisting of a numerical invariant and a line bundle on $C$. We say an invariant is \textbf{effective} if there is a sheaf whose class is equal to this invariant. We say a numerical invariant $\alphab$ is \textbf{positive} if $\rank \alphab > 0$ and $m_{p,i}(\alphab) \geq 0$ for all $p$ and $i$. If an invariant is positive, then it is automatically effective and moreover can be realised as the class of a non-zero vector bundle.

Another important invariant is an analogue of the parabolic degree, which shows up in the Riemann-Roch theorem and can be viewed as a degree relative to some other vector bundle.

\begin{definition}
Let $E$ be a locally free sheaf and $F$ be a coherent sheaf on $\Cc$.
We define the \textbf{$E$-degree}
\[
\deg_E(F) \coloneqq \deg(\pi_*\localHom(E,F))) - \rank(F) \deg \pi_*\localHom(E, \Oc).
\]
\end{definition}

We now give a notion of weights, which is a repackaging of the multiplicities useful for computations with $E$-degrees. Let $E$ be a locally free sheaf on $\Cc$ with multiplicities $m_{p, i}$. We define the \textbf{weights} of $E$ to be the collection 
\[w_{p, i} = w_{p,i}(E) \coloneqq \dfrac{1}{\rank E} \cdot \sum_{j = 1}^{i} m_{p,j}(E),\] where $i$ runs from $0$ to $e_p - 1$ and $p \in \stackypoints$. In particular, $w_{p,0}=0$. 

\begin{proposition}[{\cite[Theorem 1.3.19]{Lisanne}}]
\label{theorem: Edegreeformula}
Let $E$ be a locally free sheaf with weights $w_{p, i}$ and let $F$ be a locally free sheaf with multiplicities $m_{p, i}$. We have
\[
\deg_E(F) = \rank(E) \cdot \deg(\pi_*F) + \rank(E) \cdot \sum_p \sum_{i = 1}^{e_p-1}  w_{p, i}(E) \cdot m_{p, i}(F).
\]
\end{proposition}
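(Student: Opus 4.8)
\medskip
\noindent\emph{Proof plan.}
I would first re-express $\deg_E(F)$ as an Euler pairing on $\Cc$, then reduce by additivity to a generating set of $\Knot(\Cc)$ and verify the formula there. For the reformulation, apply Riemann--Roch on the smooth projective curve $C$ to the coherent sheaves $\pi_*\localHom(E,F)$ and $\pi_*\localHom(E,\Oc)$, whose generic ranks over $C$ are $\rank(E)\rank(F)$ and $\rank(E)$ respectively since $\pi$ is an isomorphism over the dense open complement of the stacky points; since $\Cc$ is tame we have $R^{>0}\pi_*=0$ and $\pi_*\Oc_\Cc=\Oc_C$, so $\chi(C,\pi_*\Gc)=\chi(\Cc,\Gc)$ for every coherent sheaf $\Gc$, and one obtains
\[
\deg_E(F)=\chi(E,F)-\rank(F)\,\chi(E,\Oc_\Cc),\qquad \chi(E,F):=\textstyle\sum_i(-1)^i\dim_k\ext{i}{E}{F}.
\]
In particular $\deg_E(F)$ depends only on $[F]\in\Knot(\Cc)$, and $[F]\mapsto\deg_E(F)-\rank(F)\chi(E,\Oc_\Cc)$ is additive; the right-hand side of the asserted identity is additive in $[F]$ as well, because $\pi_*$ is exact (so $\deg\pi_*$, computed via $\det$, is additive) and the multiplicities $m_{p,i}$ are additive.

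It therefore suffices to verify the formula for $F$ running over a generating set of $\Knot(\Cc)$. By \cref{lem:picC} and the structure of $\Knot(\Cc)$ one may take $[\Oc_\Cc]$, the pullbacks $[\pi^*\Oc_C(x)]$ for $x$ a non-stacky closed point of $C$, and the twisted structure sheaves $[\iota_{p*}\Oc_{\Gc_p}(i)]$ of the residual gerbes ($p\in\stackypoints$, $0\le i\le e_p-1$). For $F=\Oc_\Cc$ both sides vanish. For $F=\pi^*\Oc_C(x)$ one reduces, modulo $[\Oc_\Cc]$, to the skyscraper $\pi^*k(x)$ supported over a non-stacky point where $\pi$ is an isomorphism, and both sides equal $\rank(E)\deg(x)$. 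The substantive case is $F=\iota_{p*}\Oc_{\Gc_p}(i)$: here $\rank F=0$, so $\deg_E(F)=\chi(E,F)$, which by the projection formula equals the Euler characteristic on $\Gc_p\cong B\mu_{e_p}$ of the $i$-th character twist of $\iota_p^*E^\vee$, i.e.\ the dimension $m_{p,i}(E)$ of the degree-$i$ graded piece of $\iota_p^*E$. On the other hand $\deg\pi_*\iota_{p*}\Oc_{\Gc_p}(i)$ is $0$ for $i\ne0$ and $\deg\pi(p)$ for $i=0$ (there being no $\mu_{e_p}$-invariants otherwise), while a two-term locally free resolution of $\iota_{p*}\Oc_{\Gc_p}(i)$ by line bundles on $\Cc$ differing by a twist by $\Oc_\Cc(-\Gc_p)$, together with the fact that $\iota_p^*\Oc_\Cc(-\Gc_p)$ is the conormal character of $\Gc_p$, gives $\m(\iota_{p*}\Oc_{\Gc_p}(i))=\mathbf e_i-\mathbf e_{i-1}$ (indices read mod $e_p$); feeding this into the weight sum and telescoping via $w_{p,i}(E)-w_{p,i-1}(E)=m_{p,i}(E)/\rank(E)$ (using $w_{p,0}=0$) reproduces $m_{p,i}(E)$, matching the left-hand side.

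The one genuinely delicate point --- and the only place a sign can go astray --- is this bookkeeping at the stacky points: one must pin down the $\mu_{e_p}$-weight of the conormal line $\mathcal I_{\Gc_p}/\mathcal I_{\Gc_p}^2$ relative to the grading convention of \eqref{eq:i*E}, equivalently decide whether $\Oc_\Cc(\tfrac1{e_p}p)$ has weight $1$ or $e_p-1$ at $p$, since this fixes whether $\m(\iota_{p*}\Oc_{\Gc_p}(i))$ is $\mathbf e_i-\mathbf e_{i-1}$ or $\mathbf e_i-\mathbf e_{i+1}$, hence the direction of the index shift. Once the conventions of \cite{Lisanne} are fixed this becomes a routine finite verification (working, as one may, over a field splitting the stacky points). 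A second, more computational route is to reduce instead to the case $E,F\in\pic{\Cc}$ by filtering each into line subbundles --- a generic section of $E\otimes A^{\otimes n}$ for $A$ ample and $n\gg0$ is nowhere vanishing because $\dim\Cc=1<\rank(E\otimes A^{\otimes n})$ --- then write $E$ and $F$ via \cref{lem:picC} and compute the pushforward degrees from the local model $[\Spec k[t]/\mu_e]\to\Spec k[t^e]$, where $\deg\pi_*\Oc_\Cc(\tfrac{c}{e}p)=\lfloor c/e\rfloor\deg(x)$; the identity then reduces to an elementary manipulation of floor functions.
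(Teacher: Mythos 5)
The paper itself gives no proof of this proposition---it is imported verbatim from \cite[Theorem 1.3.19]{Lisanne}---so there is no in-text argument to compare against; judged on its own, your proposal is correct and essentially complete. The reformulation $\deg_E(F)=\chi(E,F)-\rank(F)\,\chi(E,\Oc_\Cc)$ is exactly the content of \cref{theorem: stacky RR} (tameness gives $R^{>0}\pi_*=0$, hence $\chi(\Cc,\Gc)=\chi(C,\pi_*\Gc)$), both sides of the identity are additive on $\Knot(\Cc)$ (in $F$, and also in $E$, since $\rank(E)\,w_{p,i}(E)=\sum_{j\le i}m_{p,j}(E)$ is additive---you need this if you take your second route through line-bundle filtrations), and your proposed classes do generate $\Knot(\Cc)$: the vectors $\mathbf e_i-\mathbf e_{i-1}$ span each trace-zero lattice $\ZZ^{e_p-1}$, and $\pic{C}$ is generated by divisors avoiding the finitely many images of stacky points. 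The verifications at the generators check out: for $F=\iota_{p*}\Oc_{\Gc_p}(i)$ the left side is the dimension $m_{p,i}(E)$ of the weight-zero part of $\iota_p^*E^\vee(i)$, and with $\m(F)=\mathbf e_i-\mathbf e_{i-1}$ the right side telescopes to $m_{p,i}(E)$ in all three cases $i\ge 2$, $i=1$ (using $w_{p,0}=0$) and $i=0$ (where the term $\deg\pi_*F=1$ exactly cancels $-w_{p,e_p-1}=-(\rank E-m_{p,0}(E))/\rank E$). You correctly isolate the one delicate point, the direction of the index shift coming from the weight of $\Oc_\Cc(\tfrac1{e_p}p)$ at $p$; note that the alternative convention $\mathbf e_i-\mathbf e_{i+1}$ already fails at $i=0$, so consistency of the asserted formula itself forces the convention you chose. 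The only loose ends are cosmetic: over a field where $\kappa(x_p)\ne k$ both sides acquire the same factor $[\kappa(x_p):k]$, which should be stated once rather than deferred to a splitting field.
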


In what follows we will use the \textbf{Euler pairing} and \textbf{Euler form}
\[ \langle E,F \rangle  \coloneqq \littleext{0}{E}{F} - \littleext{1}{E}{F} \qquad \text{and} \qquad \chi(F) = \langle \Oc_\Cc, F\rangle.
\]

\begin{theorem}[Stacky Riemann-Roch, {{\cite[Theorem 1.3.20]{Lisanne}}}]
\label{theorem: stacky RR}
Let $E$ be a locally free sheaf and $F$ be a coherent sheaf on $\Cc$. We have
\[\langle E,F \rangle = \deg_E(F) + \rank(F) \cdot \langle E,\Oc_\Cc \rangle.
\]
\end{theorem}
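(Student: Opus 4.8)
The plan is to reduce the statement to the classical Riemann--Roch theorem on the coarse projective curve $C$, using that $E$ is locally free and that $\Cc$ is tame. Since $E$ is locally free, $\localRHom(E,F) \cong \localHom(E,F) \cong E^{\vee}\otimes F$ is a single coherent sheaf placed in degree $0$, so
\[ \langle E, F\rangle \;=\; \chi\bigl(\Cc,\, \localHom(E,F)\bigr), \qquad \langle E, \Oc_\Cc\rangle \;=\; \chi\bigl(\Cc,\, \localHom(E,\Oc_\Cc)\bigr). \]
Because $\Cc$ is tame, $\pi_*$ is exact on quasi-coherent sheaves, hence $R^{j}\pi_* = 0$ for $j>0$ and $R\Gamma(\Cc, G) \simeq R\Gamma(C, \pi_* G)$ for every coherent sheaf $G$ on $\Cc$; moreover $\pi_* G$ is coherent since $\pi$ is proper. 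Therefore $\chi(\Cc, G) = \chi(C, \pi_* G)$, and in particular
\[ \langle E, F\rangle = \chi\bigl(C,\, \pi_*\localHom(E,F)\bigr), \qquad \langle E, \Oc_\Cc\rangle = \chi\bigl(C,\, \pi_*\localHom(E,\Oc_\Cc)\bigr). \]

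Next I would apply Riemann--Roch on the smooth projective curve $C$: for any coherent sheaf $\Gc$ on $C$ one has $\chi(C,\Gc) = \deg(\Gc) + \rank(\Gc)\cdot\chi(\Oc_C)$, which reduces, via the torsion/torsion-free exact sequence and additivity of $\chi$, $\deg$ and $\rank$, to the classical rank-and-degree formula for locally free sheaves. The remaining input is the computation of ranks: since $\Cc$ contains a dense open subscheme on which $\pi$ is an isomorphism, the generic rank of $\pi_* G$ equals $\rank G$ for every coherent sheaf $G$ on $\Cc$; applied to $G = E^{\vee}\otimes F$ and to $G = E^{\vee}$ this gives $\rank\pi_*\localHom(E,F) = \rank(E)\rank(F)$ and $\rank\pi_*\localHom(E,\Oc_\Cc) = \rank(E)$. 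Substituting,
\[ \langle E, F\rangle = \deg\bigl(\pi_*\localHom(E,F)\bigr) + \rank(E)\rank(F)\,\chi(\Oc_C), \qquad \langle E, \Oc_\Cc\rangle = \deg\bigl(\pi_*\localHom(E,\Oc_\Cc)\bigr) + \rank(E)\,\chi(\Oc_C). \]

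Finally I would multiply the second identity by $\rank(F)$ and subtract it from the first: the two $\chi(\Oc_C)$-terms cancel, leaving
\[ \langle E, F\rangle - \rank(F)\,\langle E, \Oc_\Cc\rangle = \deg\bigl(\pi_*\localHom(E,F)\bigr) - \rank(F)\,\deg\bigl(\pi_*\localHom(E,\Oc_\Cc)\bigr), \]
and the right-hand side is exactly $\deg_E(F)$ by definition, which rearranges to the claimed formula. I expect the only genuine subtleties to be (i) the use of tameness to replace $\chi$ on $\Cc$ by $\chi$ on $C$, and (ii) the rank bookkeeping for $\pi_*$; everything else is the classical Riemann--Roch theorem on $C$ together with additivity of $\langle -,-\rangle$, $\deg$ and $\rank$ in short exact sequences. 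One could alternatively combine this with the explicit $E$-degree formula of \cref{theorem: Edegreeformula} to recover the weight-theoretic shape of the statement, but that is not needed for the identity itself.
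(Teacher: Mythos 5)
Your argument is correct: since $E$ is locally free and $\Cc$ has no higher local Exts against it, $\langle E,F\rangle=\chi(\Cc,\localHom(E,F))$; tameness kills the higher direct images so $\chi$ can be computed on $C$; classical Riemann--Roch on $C$ plus the generic-rank identification $\rank\pi_*G=\rank G$ then yields exactly $\deg_E(F)$ after subtracting $\rank(F)$ times the $F=\Oc_\Cc$ case, by the very definition of $\deg_E$. The paper itself gives no proof of this statement --- it is imported from \cite[Theorem 1.3.20]{Lisanne} --- so there is nothing in the text to compare against, but your reduction to the coarse curve is the standard route and the two subtleties you flag (exactness of $\pi_*$ and the rank bookkeeping) are indeed the only points requiring care.
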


This shows that the pairing $\langle \, , \, \rangle$ descends to a pairing on $\Knum(\Cc)$. In what follows we will thus apply the pairing not only to sheaves, but also to elements of $\Knum(\Cc)$ or $\Knot(\Cc)$. For example, we will use notations such as $\langle E, \betaw \rangle$ or $\langle \alphaw, \betab \rangle$ or $\langle \alphab, \betab \rangle$.

\begin{remark}
Since the functor $\pi_*$ is exact,  the function $\deg_E$ descends to $\mathrm K_0(\Cc) \to \ZZ$.
	It follows from \cref{theorem: stacky RR} that $\deg_E$ actually descends to $\Knum (\Cc)$, and  we will then also write $\deg_E(\alphab)$ for $\alphab \in \Knum (\Cc)$.
\end{remark}

The \textbf{canonical sheaf} $\omega_\Cc$ on $\Cc$ is related to the pullback of the canonical sheaf $\omega_C$ on the coarse moduli space $C$ by the formula
\[
\omega_\Cc \cong \pi^*\omega_C \otimes
\bigotimes_{p \in \stackypoints} \Oc
\left( \frac{1}{e_p}p \right)^{\otimes e_p - 1}.
\]
We define the \textbf{genus} $g_\Cc$ of $\Cc$ by \[g_{\Cc} = g_C + \frac{1}{2}\sum_{p \in \stackypoints} \frac{e_p - 1}{e_p}.\]

\begin{theorem}[Serre Duality, {\cite[Theorem 1.3.7]{Lisanne}}]
\label{theorem: Serre duality}
Let $E$ be a coherent sheaf on a projective stacky curve $\Cc$, we have natural isomorphisms
\[
\hom{E}{\omega_\Cc} \cong \ext{1}{\Oc_\Cc}{E}^\vee \quad  \text{ and }  \quad  \ext{1}{E}{\omega_\Cc} \cong \hom{\Oc_\Cc}{E}^\vee.\]
\end{theorem}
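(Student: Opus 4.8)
The plan is to deduce the statement from classical Serre duality on the coarse curve $C$, exploiting that $\Cc$ is tame so that $\pi_*$ is exact. First I would package the two isomorphisms as the degree-$0$ and degree-$1$ parts of a single derived statement: that there is a natural isomorphism
\[
\Rb\Hom_\Cc(E, \omega_\Cc) \;\cong\; \Rb\Gamma(\Cc, E)^{\vee}[-1],
\]
where the right-hand complex is concentrated in degrees $0$ and $1$ because $\Cc$ has cohomological dimension $1$. Taking $H^0$ yields $\Hom(E,\omega_\Cc) \cong H^1(\Cc,E)^\vee = \Ext^1(\Oc_\Cc, E)^\vee$, and taking $H^1$ yields $\Ext^1(E,\omega_\Cc) \cong H^0(\Cc,E)^\vee = \Hom(\Oc_\Cc, E)^\vee$, which are precisely the two claimed isomorphisms (using that on a curve cohomology vanishes outside degrees $0,1$).

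Next I would set up relative Grothendieck--Verdier duality along the proper morphism $\pi \colon \Cc \to C$. The crux is to identify the relative dualizing sheaf: since $\pi$ has relative dimension $0$ and $\Cc$, $C$ are smooth, $\pi^{!}\Oc_C$ is a line bundle in degree $0$, and an étale-local computation near a stacky point $p$ of order $e_p$ (where $\pi$ looks like $[\Spec S/\mu_{e_p}] \to \Spec S^{\mu_{e_p}}$ with $\mu_{e_p}$ acting tamely, $e_p$ invertible in $k$) identifies it with $\bigotimes_{p}\Oc_\Cc\big(\tfrac{e_p-1}{e_p}p\big)$; comparing with the canonical sheaf formula recalled above, this says $\pi^{!}\Oc_C \cong \omega_\Cc \otimes \pi^*\omega_C^{\vee}$, hence $\pi^{!}\omega_C \cong \omega_\Cc$. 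Relative duality then gives a natural isomorphism
\[
\Rb\pi_{*}\,\localRHom_\Cc(E, \omega_\Cc) \;\cong\; \localRHom_C(\Rb\pi_{*}E, \omega_C) \;=\; \localRHom_C(\pi_{*}E, \omega_C),
\]
where I have used $\Rb\pi_{*} = \pi_{*}$ by tameness. Applying $\Rb\Gamma(C, -)$, invoking classical Serre duality on the smooth projective curve $C$ in the form $\Rb\Hom_C(\mathcal{F}, \omega_C) \cong \Rb\Gamma(C,\mathcal{F})^{\vee}[-1]$ for $\mathcal{F} \in \Coh(C)$, and using $\Gamma(\Cc, -) = \Gamma(C, -)\circ\pi_{*}$, I obtain the displayed derived isomorphism, and the theorem follows.

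The main obstacle is the relative duality step for the non-representable morphism $\pi$: one must know that $\pi$ admits a relative dualizing complex with a well-behaved trace, and that $\pi^{!}\omega_C$ is canonically $\omega_\Cc$ compatibly with that trace, so that the resulting isomorphisms are natural in $E$. This can be handled either by appealing to the general machinery of Grothendieck duality for Deligne--Mumford stacks, or through the explicit étale-local model $[\Spec S/\mu_e]\to\Spec S^{\mu_e}$, where one computes the relative different $(t^{e-1})$ by hand, checks it is principal (again using $e$ invertible), and glues. A genuinely cleaner alternative would be to bypass $C$ and invoke Serre duality directly for the smooth proper Deligne--Mumford stack $\Cc$, reducing the problem to matching the intrinsic dualizing sheaf $\Omega^1_{\Cc/k}$ with the stated formula --- once more a local computation at the stacky points.
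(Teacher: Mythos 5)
The paper does not prove this statement; it is imported wholesale from the fourth author's thesis \cite[Theorem 1.3.7]{Lisanne}, so there is no in-paper argument to compare against. Judged on its own terms, your proposal is a correct and essentially standard derivation. The packaging into the single derived isomorphism $\Rb\Hom_\Cc(E,\omega_\Cc)\cong \Rb\Gamma(\Cc,E)^\vee[-1]$ and the extraction of the two stated isomorphisms as its $H^0$ and $H^1$ is clean (using that $\Coh(\Cc)$ has homological dimension $1$, so both sides live in degrees $0$ and $1$, and this works for arbitrary coherent $E$, torsion included). The reduction to $C$ via $\Rb\pi_*\localRHom_\Cc(E,\pi^!\omega_C)\cong\localRHom_C(\pi_*E,\omega_C)$, tameness to replace $\Rb\pi_*$ by $\pi_*$, classical Serre duality on $C$, and $\Gamma(\Cc,-)=\Gamma(C,-)\circ\pi_*$ all fit together correctly. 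Your local computation is also right: on the chart $[\Spec k[[t]]/\mu_e]\to\Spec k[[t^e]]$ the different is $(et^{e-1})=(t^{e-1})$ since $e$ is invertible, the étale atlas satisfies $p^!=p^*$ so the computation descends to the stack, and comparison with the displayed formula for $\omega_\Cc$ gives $\pi^!\omega_C\cong\omega_\Cc$. You correctly identify the one genuinely nontrivial input, namely a Grothendieck duality formalism (with trace, natural in $E$) for the proper, non-representable, finite flat morphism $\pi$; this is available for tame Deligne--Mumford stacks (and the paper's own \cref{theorem: Grothendieck duality} in the appendix is a statement of exactly this flavour), or can be assembled by hand from the local model as you indicate. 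Your suggested alternative of invoking duality directly on the smooth proper stack $\Cc$ with dualizing sheaf $\Omega^1_{\Cc/k}$ is equally viable and arguably shorter. No gaps beyond the ones you have already flagged and addressed.
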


In what follows, we denote by $\omegab$ the class of $\omega_\Cc$ in $\Knum (\Cc)$ and set
 \[\SD (F) \coloneqq \localHom (F,\omega_\Cc)\] the Serre dual of $F$. If $[F]=\alphab \in \Knum(\Cc)$, then $\SD(F)$ has class $\SD(\alpha)=\alphab^\vee \otimes \omegab \in \Knum(\Cc)$.
	We can thus rephrase \cref{theorem: Serre duality} as
	\begin{equation} \label{eq: pairing and SD} \langle \alphab, \betab \rangle 	= - \langle \betab , \alphab \otimes \omegab \rangle 	= - \langle \betab , \SD(\alphab^\vee) \rangle .
	\end{equation}

\begin{proposition}\label{proposition:bound for Euler pairing}
    Let $\alphab$ be a positive invariant, then
    \[ (g_C - 1) \rank(\alphab)^2 \leq - \langle \alphab, \alphab \rangle \leq (g_{\Cc} - 1) \rank(\alphab)^2. \]
    Moreover, the left hand bound is attained whenever $\alphab = [\pi^*F \otimes L]$, for some vector bundle $F$ on $C$ and a line bundle $L$ on $\Cc$. The right hand bound is attained whenver the multiplicities are balanced, i.e. $m_{p, i}(\alphab) = m_{p, j}(\alphab)$ for all $p$ and $j$.
\end{proposition}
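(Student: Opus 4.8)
The plan is to first convert $\langle\alphab,\alphab\rangle$ into a closed formula in the numerical invariants of $\alphab$, and then read off both inequalities, together with the equality cases, from an elementary optimisation of a quadratic form in the multiplicities. Concretely, writing $n=\rank\alphab$, I would prove
\begin{equation*}
-\langle\alphab,\alphab\rangle=(g_C-1)\,n^{2}+\sum_{p\in\stackypoints}\ \sum_{0\le i<j\le e_{p}-1}m_{p,i}(\alphab)\,m_{p,j}(\alphab).
\tag{$\star$}
\end{equation*}
Granting $(\star)$ the proposition follows quickly. Let $q_{p}$ denote the inner double sum at $p$. Since $\sum_{i}m_{p,i}(\alphab)=n$ we may rewrite $q_{p}=\tfrac12\bigl(n^{2}-\sum_{i}m_{p,i}(\alphab)^{2}\bigr)$; as the multiplicities are non-negative this gives $q_{p}\ge 0$, while $\sum_{i}m_{p,i}(\alphab)^{2}\ge n^{2}/e_{p}$ by Cauchy--Schwarz gives $q_{p}\le\tfrac{(e_{p}-1)n^{2}}{2e_{p}}$. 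Summing over $p$ and using $\tfrac12\sum_{p}\tfrac{e_{p}-1}{e_{p}}=g_{\Cc}-g_{C}$ (the definition of $g_{\Cc}$) yields $(g_{C}-1)n^{2}\le-\langle\alphab,\alphab\rangle\le(g_{\Cc}-1)n^{2}$. Furthermore $q_{p}=0$ precisely when the multiplicity vector at $p$ is concentrated in one weight; this holds at all $p$ simultaneously exactly when $\alphab=[\pi^{*}F\otimes L]$ for a vector bundle $F$ on $C$ and a line bundle $L$ on $\Cc$ (take $L=\Oc_{\Cc}\bigl(\sum_{p}\tfrac{a_{p}}{e_{p}}p\bigr)$ where $a_{p}$ is the distinguished weight, and adjust $\det F$), and $q_{p}$ is maximal exactly when the $m_{p,i}(\alphab)$ are all equal, i.e.\ when the multiplicities are balanced (which forces $e_{p}\mid n$). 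This gives the two equality statements.

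For $(\star)$ itself I would represent $\alphab$ by a locally free sheaf $E$ and use $\omega_{\Cc}$ to linearise. Tensoring both arguments of the Euler pairing by a line bundle leaves it unchanged, so \eqref{eq: pairing and SD} gives $\langle E,E\otimes\omega_{\Cc}\rangle=-\langle E,E\rangle$; applying \cref{theorem: stacky RR} to $\langle E,E\rangle$ and to $\langle E,E\otimes\omega_{\Cc}\rangle$ and subtracting, the terms $n\langle E,\Oc_{\Cc}\rangle$ cancel, leaving $2\langle E,E\rangle=\deg_{E}(E)-\deg_{E}(E\otimes\omega_{\Cc})$. Now I would invoke \cref{theorem: Edegreeformula}. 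Choosing the isomorphisms $\Gc_{p}\cong B\mu_{e_{p}}$ so that $\omega_{\Cc}^{-1}$ has multiplicity vector $(0,1,0,\dots,0)$ at each $p$, one has $w_{p,i}(\omega_{\Cc}^{-1})=1$ for $1\le i\le e_{p}-1$ and $m_{p,i}(E\otimes\omega_{\Cc})=m_{p,i+1}(E)$ (indices mod $e_{p}$). Two further inputs are needed: first, applying the definition of the $E$-degree with $\localHom(\omega_{\Cc}^{-1},E)=E\otimes\omega_{\Cc}$ together with \cref{theorem: Edegreeformula} for the reference sheaf $\omega_{\Cc}^{-1}$ gives $\deg\pi_{*}(E\otimes\omega_{\Cc})=\deg\pi_{*}E+\sum_{p}\bigl(n-m_{p,0}(E)\bigr)+n\deg\pi_{*}\omega_{\Cc}$, where $\deg\pi_{*}\omega_{\Cc}=2g_{C}-2$ by \cref{theorem: Serre duality} and $\chi(\Oc_{\Cc})=1-g_{C}$; second, the partial-summation identity
\begin{equation*}
n\sum_{i=1}^{e_{p}-1}w_{p,i}(E)\,\bigl(m_{p,i}(E)-m_{p,i+1}(E)\bigr)=\sum_{i=0}^{e_{p}-1}m_{p,i}(E)^{2}-n\,m_{p,0}(E),
\end{equation*}
which one checks at each $p$ by Abel summation from $w_{p,i}=\tfrac1n\sum_{j\le i}m_{p,j}$ and $\sum_{i}m_{p,i}=n$. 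Substituting both into $2\langle E,E\rangle=\deg_{E}(E)-\deg_{E}(E\otimes\omega_{\Cc})$, everything recombines termwise into a sum over $p$ and one lands on $2\langle E,E\rangle=-2(g_{C}-1)n^{2}-\sum_{p}\bigl(n^{2}-\sum_{i}m_{p,i}(E)^{2}\bigr)$, i.e.\ $(\star)$. As a cross-check, $(\star)$ also comes out of $\langle E,E\rangle=\chi(\Cc,\localHom(E,E))=\deg\pi_{*}\localHom(E,E)+n^{2}(1-g_{C})$, since $\localHom(E,E)$ is self-dual with weight-$0$ part $m_{p,0}(\localHom(E,E))=\sum_{i}m_{p,i}(E)^{2}$ at each $p$.

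The optimisation above is routine, so essentially all of the work is in establishing $(\star)$. I expect the main obstacle there to be the bookkeeping: pinning down the weight-shift convention for $\omega_{\Cc}$ (so that the manifest cyclic symmetry of the answer is respected), computing how $\deg\pi_{*}$ changes under tensoring with $\omega_{\Cc}$, and verifying the Abel-summation identity. It is exactly at these steps that the genus defect $g_{\Cc}-g_{C}$ materialises: the ``balanced'' part $\sum_{i}m_{p,i}(E)^{2}\approx n^{2}/e_{p}$ of the multiplicities is what contributes the extra $\tfrac12\sum_{p}\tfrac{e_{p}-1}{e_{p}}$.
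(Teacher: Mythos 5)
Your proposal is correct, and it reaches the key identity
\[
-\langle\alphab,\alphab\rangle=(g_C-1)\rank(\alphab)^{2}+\sum_{p}\sum_{i<j}m_{p,i}(\alphab)\,m_{p,j}(\alphab)
\]
by a genuinely different route from the paper. The paper exploits positivity of $\alphab$ to choose a representative that is a direct sum of line bundles, writes each as $\pi^*L_i\otimes\Oc(\sum_p\tfrac{a_{p,i}}{e_p}p)$ via \cref{lem:picC}, computes the pairwise Euler pairings $\langle\Lc_i,\Lc_j\rangle$ directly, and sums using bilinearity; the cross terms $\sum_{i<j}m_{p,i}m_{p,j}$ then appear by counting pairs of indices. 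You instead linearise with Serre duality ($\langle E,E\otimes\omega_\Cc\rangle=-\langle E,E\rangle$), apply stacky Riemann--Roch twice to reduce to $2\langle E,E\rangle=\deg_E(E)-\deg_E(E\otimes\omega_\Cc)$, and evaluate via the weight formula of \cref{theorem: Edegreeformula} plus an Abel-summation identity; I checked the weight-shift convention, the formula for $\deg\pi_*(E\otimes\omega_\Cc)$, and the partial-summation identity, and they all work out. Your approach buys independence from the line-bundle splitting (it works directly with any locally free representative and makes the appearance of $g_\Cc-g_C$ structurally transparent), at the cost of more bookkeeping with weights and gradings; the paper's decomposition is shorter and more self-contained given \cref{lem:picC}. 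The final optimisation step --- rewriting the cross terms as $\tfrac12(n^2-\sum_i m_{p,i}^2)$ and bounding via non-negativity and Cauchy--Schwarz --- is identical to the paper's combinatorial lemma, and your treatment of the equality cases is in fact slightly stronger (you prove ``exactly when'' where the paper only claims ``whenever'').
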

\begin{proof}
As $\alphab$ is positive, we can choose a representative $F = \bigoplus_i \Lc_i$ of the class $\alphab$, which is a sum of line bundles. As in \cref{lem:picC}, we can write $\Lc_i \simeq \pi^*L_i \otimes \Oc(\sum_{p} \frac{a_{p,i}}{e_p} e_p)$. Then we calculate
\[
\langle \Lc_i, \Lc_j \rangle = \left\langle \Oc_{\Cc}, \pi^*(L_j \otimes L_i^\vee) \otimes \left(\sum_{p} \frac{a_{p,j} - a_{p, i}}{e_p} e_p\right)\right\rangle = \langle \Oc_C, L_j \otimes L_i^\vee\rangle - \sum_{p \: : \: a_{p, j} < a_{p, i}} 1.
\]
We have $m_{p,\ell}(\Lc_i) = 1$ if and only if $a_{p,i} = \ell$ and otherwise $m_{p,\ell}(\Lc_i) = 0$. Thus $m_{p,\ell}(F)$ counts the number of $a_{p,i}$'s such that $a_{p,i} = \ell$. Using the fact that the Euler pairing is additive, we obtain 
\[
-\langle F, F\rangle = (g_C - 1)\rank(F)^2 + \sum_p \sum_{j = 0}^{e_p - 2}\left( m_{p, j}(F) \cdot \sum_{i = j+1}^{e_p - 1} m_{p, i}(F)\right).
\]
The result now follows from the following combinatorial statement. Let $n = \sum_{i = 0}^{e - 1} m_i$ be a partition of $n$ into $e$ terms. Then 
\[
S \coloneq \sum_{i = 0}^{e - 2}\left( m_i \cdot \sum_{j = i + 1}^e m_{j}\right) \leq \frac{e-1}{2 e} n^2,
\]
moreover the bound is attained precisely when $m_i = n/e$ for every $i$. To see this note that
\[
2S + \sum_{i = 0}^{e-1} m_i^2 = \left(\sum m_i \right)^2 = n^2,
\]
so $S$ is maximal when $\sum m_i^2$ is minimal. This happens when $m_i = \frac{n}{e}$, in which case we find $2S + e \frac{n}{e}^2 = n^2$ or $S = \frac{e - 1}{2e}n^2$.
\end{proof}

\subsection{Generating sheaves and semistability}

We will now introduce the notion of generating sheaves on a stacky curve, which enhances the notion of polarisation to the stacky case.
This notion allows one to adapt the classical quot scheme arguments to construction of moduli spaces of sheaves on Deligne-Mumford stacks.
The definition that we now give is usually called the ``local condition of generation'' in \cite{olsson-starr:arxiv}, but it is easier to check in examples.

\begin{definition}
\label{definition: generating sheaves thru local condition}
	We say that a locally free sheaf $E$ is a \textbf{generating sheaf} if
	$m_{p, i} \neq 0$ for every $p \in \stackypoints$ and $0 \leq i \leq e_p - 1$.
	We say that $\alphab \in \Knum (\Cc)$ is a \textbf{generating numerical invariant} if for all stacky points $p \in \stackypoints$ and $0 \leq i \leq e_p-1$, we have $m_{p,i} (\alphab) > 0$; equivalently, $\alphab = [E]$ for some generating sheaf $E$.
\end{definition}

By \cite[Theorem 1.3.13]{Lisanne} (see also \cite[Theorem 5.2]{olsson-starr:arxiv}), a locally free sheaf $E$ is generating if and only if for all coherent sheaves $F$, the natural map $\pi^*\pi_*(\localHom(E,F)) \otimes E \rightarrow F$ is surjective.

\begin{definition}
\label{definition: E-semistability}
Let $E$ be a generating sheaf on $\Cc$ and $F$ be a coherent sheaf on $\Cc$.
We say that $F$ is $E$-\textbf{(semi)stable} if for every proper subsheaf we have \[	\mu_E(F')\coloneqq \frac{\deg_E(F')}{\rank(F')} \underset{(\leq)}{<} \frac{\deg_E(F)}{\rank(F)} = \mu_E(F).\] When $[E]=\alphab \in \Knum(\Cc)$, we will more often use the terminology $\alphab$-(semi)stable. Moreover, we note that $\mu_E(F)$ only depend on the classes $[E]=\alphab$ and $[F]=\betab$ in $\Knum(\Cc)$ and thus we will often write $\mu_\alphab(\beta)$ in place of $\mu_E(F)$.
\end{definition}

\begin{remark}
\label{remark: slope from pairing}
	Using Riemann--Roch (\cref{theorem: stacky RR}), we have
	\begin{equation}\label{equation:slope and euler char}
		\frac{\langle \alphab, \gammab \rangle}{\rank \gammab} = \mu_{\alphab}(\gammab) + \langle \alphab, \Oc_{\Cc} \rangle.
	\end{equation}
	In particular, if $\langle \alphab, \betab \rangle = 0$, then $\mu_{\alphab}(\gammab) \leq \mu_{\alphab}(\betab)$ is equivalent to $\langle \alphab, \gammab \rangle \leq 0$; and similarly for strict inequalities.
\end{remark}

\begin{lemma}\label{lemma how to modify to orthogonal alpha}
Let $\alphab$ be a generating numerical invariant. For any positive numerical invariant $\betab$, there is a  generating numerical invariant $\alphab'$ such that $\langle\alphab', \betab\rangle = 0$ and that the notions of (semi)stability with respect to $\alphab$ and $\alphab'$ coincide.
\end{lemma}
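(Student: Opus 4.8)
The plan is to modify $\alphab$ by tensoring with a suitable line bundle on $\Cc$, since tensoring a generating sheaf by a line bundle preserves the generating property (the multiplicities $m_{p,i}$ only get cyclically permuted and are still all positive) and, more importantly, does not change the notion of (semi)stability. The latter is the key observation: for a line bundle $N$ on $\Cc$ and any coherent $F$, one has $\deg_{E\otimes N}(F) = \deg_E(F)$ because $\localHom(E\otimes N, F) \cong \localHom(E, F)\otimes N^\vee$ and $\deg_{E}(F)$ is defined so that the normalization by $\rank(F)\deg\pi_*\localHom(E,\Oc)$ cancels the contribution of $N^\vee$; hence $\mu_{E\otimes N} = \mu_E$ and the stability inequalities are literally the same. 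So it suffices to find a line bundle $N$ with $\langle \alphab \otimes [N], \betab\rangle = 0$.

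Next I would compute how $\langle \alphab\otimes[N],\betab\rangle$ varies with $N$. By Riemann--Roch (\cref{theorem: stacky RR}) and \cref{remark: slope from pairing}, $\langle \alphab\otimes[N],\betab\rangle = \rank(\betab)\big(\mu_{\alphab}(\betab) + \langle \alphab\otimes[N],\Oc_\Cc\rangle\big)$, and $\langle \alphab\otimes[N],\Oc_\Cc\rangle = \chi(\SD(\alphab^\vee)\otimes[N^\vee])$ or, more directly, one checks that $N \mapsto \langle \alphab\otimes[N],\betab\rangle$ takes all sufficiently large integer values as $N$ ranges over an appropriate family of line bundles. Concretely, using \cref{lem:picC} write $N = \pi^*(\Oc_C(D))$ for $D$ an effective divisor on $C$; then $\langle \alphab\otimes[\pi^*\Oc_C(D)],\betab\rangle = \langle \alphab,\betab\rangle + \rank(\alphab)\rank(\betab)\deg(D)$ by additivity of the Euler pairing and the projection formula (tensoring by $\pi^*\Oc_C(D)$ shifts $\deg\pi_*$ by $\rank\cdot\deg D$ and leaves multiplicities unchanged). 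This is an arithmetic progression in $\deg D$ with common difference $\rank(\alphab)\rank(\betab)$, which need not hit $0$ on the nose.

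To fix the divisibility issue, I would enlarge the family of twists to include the stacky line bundles $\Oc_\Cc(\tfrac1{e_p}p)$. Twisting $\alphab$ by $\Oc_\Cc(\tfrac1{e_p}p)$ cyclically shifts the multiplicity vector $m_p(\alphab)$ and changes $\langle\alphab,\betab\rangle$ by a controlled amount computable via \cref{theorem: Edegreeformula}; combining such twists at the various stacky points with a twist by $\pi^*\Oc_C(D)$, the set of achievable values of $\langle\alphab\otimes[N],\betab\rangle$ becomes a coset of $\gcd$ of these increments, and a short check (using that $e_p$ and the multiplicities are coprime in the right way, or simply that one of the stacky increments is $\rank(\betab)\cdot(\text{something coprime to }\rank\alphab)$ — alternatively just replacing $\betab$-independence by allowing negative $\deg D$) shows $0$ is attained. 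The main obstacle is precisely this last point: showing that after allowing all the available twists the increment subgroup is all of $\ZZ$ (or at least contains $\langle\alphab,\betab\rangle$), rather than a proper subgroup; if it turns out to be only $\rank(\alphab)\rank(\betab)\ZZ$ one instead argues that $\langle\alphab,\betab\rangle$ already lies in that subgroup — but this need not hold, so I expect the real content to be a careful choice of stacky twist making the gcd equal to $1$, or a slightly cleverer modification (e.g. also allowing $\rank\alphab$ to change by passing to $\alphab \oplus \Oc_\Cc(\ast)$-type corrections) to land exactly on $0$.
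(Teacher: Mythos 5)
There is a genuine gap, and it sits exactly where you flagged it. Restricting to twists by line bundles pulled back from $C$ (the only twists that demonstrably preserve the stability notion), the pairing $\langle\alphab\otimes[\pi^*\Oc_C(D)],\betab\rangle$ moves in steps of $\rank(\alphab)\rank(\betab)$, so you cannot in general reach $0$, and your proposed fixes do not close this. Twisting by a stacky line bundle $\Oc_\Cc(\tfrac1{e_p}p)$ does \emph{not} preserve (semi)stability: your paragraph-one cancellation argument only works when $N$ is a pullback from $C$ (so the projection formula shifts both terms of $\deg_{E}$ by the same amount); for a stacky twist the change in $\deg\pi_*\localHom(E\otimes N,F)$ depends on the multiplicities of $\localHom(E,F)$ at $p$ rather than just on $\rank E\cdot\rank F$, and permuting the multiplicities of $\alphab$ is precisely what changes the stability condition in this theory (cf.\ the weight formula in \cref{theorem: Edegreeformula}). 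So the gcd-of-increments argument is not available, and the closing suggestion of an $\alphab\oplus\Oc_\Cc(\ast)$ correction would add a genuinely different degree function to $\deg_\alphab$ and again alter stability.

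The paper's resolution is a small but essential extra idea that your proposal is missing: do not insist that $\alphab'$ be a single twist of $\alphab$; take a positive \emph{integral linear combination} of two twists. Set $A=\langle\alphab,\betab\rangle$, let $\etab=[\Oc(q)]$ for a non-stacky point $q$, pick $r$ so that $B=\langle\alphab\otimes\etab^{\otimes r},\betab\rangle$ has the opposite sign to $A$, and put $\alphab'=|B|\,\alphab+|A|\,\alphab\otimes\etab^{\otimes r}$. Then $\langle\alphab',\betab\rangle=|B|A+|A|B=0$ exactly, $\alphab'$ is still generating (a positive combination of a generating and an effective invariant), and since $q$ is non-stacky one has $\deg_{\alphab\otimes\etab^{\otimes r}}=\deg_{\alphab}$, hence $\deg_{\alphab'}=(|A|+|B|)\deg_\alphab$ and the (semi)stability notions agree. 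This sidesteps the divisibility obstruction entirely.
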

\begin{proof}
    Assume that $A := \langle \alphab, \betab \rangle \neq 0$, and let $\etab = [\Oc(q)] \in \Knum(\Cc)$ for a non-stacky point $q \in \Cc$. Pick $r \in 
    \ZZ$ so that $B= \langle \alphab \otimes  \etab^{\otimes r}, \betab\rangle$ has the opposite sign from $A$.
    Then it is straightforward to check that the numerical invariant 
    \[
        \alphab' := |B| \alphab + |A| \alphab \otimes  \etab^{\otimes r}
    \]
    is orthogonal to $\betab$ and additionally $ \alphab'$ is generating, as it is a positive linear combination of a generating invariant and an effective invariant.
    The equivalence of the corresponding notions of (semi)stability follows from the fact that $\deg_{\alphab'} = (|A| + |B|)\deg_\alphab$, as this degree is additive and preserved by tensoring by the line bundle $\mathcal{O}(rq)$ as it is the pullback of a line bundle on $C$ by virtue of $q$ being a non-stacky point. 
\end{proof}

\begin{remark}\label{rmk relation with parabolic VBs}
   Moduli stacks of parabolic bundles with fixed invariants that are semistable with respect to fixed parabolic weights, implicitly defined in \cite{mehta-seshadri:1980}, are isomorphic to moduli stacks of  vector bundles on a stacky curve with fixed invariants that are semistable with respect to a generating sheaf that depends on the weights. This was shown on the level of (monoidal) categories in \cite{borne:2007} when the weights are of the form $0 < \frac{1}{n} < \frac{2}{n} < \ldots < 1$. In \cite[Corollary~7.10]{nironi:2009} this categorical equivalence was extended to an isomorphism of stacks. This was generalised to arbitrary weights in \cite[Corollary~2.0.17]{Lisanne}. The relationship between parabolic bundles and orbifold bundles was also studied by Biswas \cite{biswas:1997}.
    Consequently \cref{ThmB} gives a new construction of the parabolic moduli spaces of Mehta and Seshadri.   
\end{remark}

We conclude this section describing the interplay between Serre duality and stability; this will be used in \cref{sec:ext-vanishing} to translate a Hom-vanishing statement into an Ext-vanishing statement.

\begin{proposition}
\label{theorem:SD-stability}
    Let $\alphab$ be a generating numerical invariant and let $E$ be a vector bundle on $\Cc$ such that $\langle \alphab, E \rangle =0$. Then $E$ is $\alphab$-semistable if and only if $\SD(E)$ is $\alphab^\vee$-semistable
\end{proposition}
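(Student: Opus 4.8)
The plan is to reduce the statement to a purely numerical comparison of slopes, using Serre duality to transport subsheaves of $E$ to quotient sheaves of $\SD(E)$ (equivalently, subsheaves, after taking a second Serre dual). First I would unwind the definitions: by \cref{remark: slope from pairing}, since $\langle \alphab, E\rangle = 0$, the bundle $E$ is $\alphab$-semistable if and only if $\langle \alphab, F\rangle \le 0$ for every nonzero proper subsheaf $F \subseteq E$; moreover, it suffices to test this on subsheaves $F$ that are saturated, so that the quotient $E/F$ is again a vector bundle (torsion in the quotient only decreases $\rank$ without decreasing $\deg_\alphab$ in the relevant direction, so saturating can only make the inequality harder to satisfy — I would state this carefully but not belabour it). For such an $F$, applying $\SD(\blank) = \localHom(\blank, \omega_\Cc)$ to the short exact sequence $0 \to F \to E \to E/F \to 0$ yields a short exact sequence $0 \to \SD(E/F) \to \SD(E) \to \SD(F) \to 0$ of vector bundles on $\Cc$.

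Next I would check the numerical bookkeeping. We need $\langle \alphab^\vee, \SD(E)\rangle = 0$, so that the same reformulation via \cref{remark: slope from pairing} applies to $\SD(E)$: this follows from \eqref{eq: pairing and SD}, which gives $\langle \alphab^\vee, \SD(E)\rangle = \langle \alphab^\vee, E^\vee \otimes \omegab\rangle = -\langle E^\vee, \alphab^\vee\rangle = \langle\text{dualise}\rangle$, and I would spell out that this equals $\pm\langle \alphab, E\rangle = 0$ (being careful that dualising swaps the two arguments and introduces the sign that \eqref{eq: pairing and SD} records). Then, for a saturated subsheaf $G \subseteq \SD(E)$, I would dually observe that $G$ arises (up to the canonical identification $\SD\SD \cong \mathrm{id}$ on vector bundles, which holds since $\omega_\Cc$ is a line bundle) as $\SD(E/F)$ for a unique saturated $F \subseteq E$, namely $F = \SD(\SD(E)/G)$. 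Finally, I would compute that $\langle \alphab^\vee, \SD(E/F)\rangle = 0$ is equivalent to $\langle \alphab, E/F\rangle = 0$, which (using additivity of the pairing and $\langle \alphab, E\rangle = 0$) is equivalent to $\langle \alphab, F\rangle = 0$; and that the strict/non-strict inequalities correspond in the same way. Putting these together: $E$ is $\alphab$-semistable $\iff$ $\langle \alphab, F\rangle \le 0$ for all saturated $0 \ne F \subsetneq E$ $\iff$ $\langle \alphab^\vee, \SD(E/F)\rangle \ge 0$ for all such $F$ $\iff$ $\langle \alphab^\vee, G\rangle \ge 0$ for all saturated $0 \ne G \subsetneq \SD(E)$, wait — I would recheck the direction of this last inequality, since a subsheaf inclusion $G \hookrightarrow \SD(E)$ corresponds to a quotient of $E$, hence to a sub-slope inequality of $E$ read backwards; this sign-tracking is exactly where care is needed.

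The main obstacle I anticipate is precisely this sign and inequality-direction bookkeeping: there are three places where a "dual" is taken (Serre dual of sheaves, the $\vee$ on the numerical invariant $\alphab$, and the quotient/sub exchange), each of which flips an inequality or a sign, and getting an even number of flips so that "$\alphab$-semistable" maps to "$\alphab^\vee$-semistable" (rather than to some notion of co-semistability) is the crux. I would organise this by proving once and for all the clean equivalence "$E$ is $\alphab$-semistable $\iff$ $\langle\alphab, F\rangle \le 0$ for all saturated nonzero proper $F\subseteq E$ $\iff$ $\langle \alphab, E/F\rangle \ge 0$ for all such $F$" and then applying $\SD$ to the quotient side, so that only one genuine sign computation — the identity $\langle \alphab^\vee, \SD(\gammab)\rangle = -\langle \gammab^\vee, \alphab\rangle$, a direct consequence of \eqref{eq: pairing and SD} — is ever invoked. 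A secondary minor point to dispatch is that reducing to saturated subsheaves is harmless: if $F \subseteq E$ is arbitrary with saturation $\bar F$, then $\rank \bar F = \rank F$ while $\deg_\alphab \bar F \ge \deg_\alphab F$ (the quotient $\bar F/F$ is torsion, and tensoring with a generating $E$ and pushing forward shows its $\alphab$-degree is $\ge 0$), so semistability need only be tested on saturated subsheaves; and symmetrically for $\SD(E)$.
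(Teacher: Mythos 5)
Your proposal is correct and follows essentially the same route as the paper: the whole content is the identity $\langle \alphab^\vee, F\rangle = -\langle \alphab, \SD(F)\rangle$ from \eqref{eq: pairing and SD} together with the fact that $\SD$ exchanges subsheaves of $\SD(E)$ with quotients of $E$ (the paper runs this on subsheaves of $\SD(E)$ and gets the converse by applying $\SD$ again; your version via quotients of $E$ is the mirror image, and your saturation remark addresses a point the paper leaves implicit). The only blemishes are the sign slips you already flag yourself: the last displayed inequality should read $\langle\alphab^\vee, \SD(E/F)\rangle \le 0$, and the ``clean identity'' should be $\langle\alphab^\vee, \SD(\gammab)\rangle = -\langle\alphab,\gammab\rangle$; both come out right once the bookkeeping is done as you describe.
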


\begin{proof}
    Assume that $E$ is $\alphab$-semistable.
    Given a subsheaf $F \subset \SD(E)$, we apply Serre duality to get the following equality:
    \[
    \langle \alphab^\vee , F \rangle = - \langle F , \alphab^\vee \otimes \omegab \rangle
    = - \langle \alphab , \SD(F) \rangle.
    \]
    But $\SD(F)$ is a quotient of the $\alphab$-semistable sheaf $E$, hence
    $- \langle \alphab , \SD(F) \rangle \leq 0$, as desired. For the converse, we just replace $E$ with $\SD(E)$ and $\alphab$ with $\SD(\alphab)$ and notice that $\SD(\SD(E)) \cong E$, hence the argument above applies.
\end{proof}

% ---
\section{Stacks of bundles on stacky curves}
% ---

In this section, we define stacks of vector bundles and coherent sheaves on stacky curves and state their main properties as described in \cite{Lisanne}. We then describe the main properties of stacks of semistable sheaves, and in particular show they are of finite type for semistability with respect to a generating numerical invariant.

% -
\subsection{Properties of stacks of bundles and sheaves on stacky curves}
% -

As always, let $\Cc$ be a tame stacky curve over a field $k$. We first introduce the stack of all vector bundles on $\Cc$ without any notion of semistability and state its basic properties. In fact, this is an open substack of the stack of all coherent sheaves on $\Cc$, and so we also introduce this stack of coherent sheaves, as it will play a role in proving the existence of good moduli spaces for stacks of semistable vector bundles.

	We denote by $\Coh$ the stack of coherent sheaves on $\Cc$, namely
	\[
		\Coh(S)=  \left\langle F
		\; \middle|
		\begin{array}{l}
			F \text{ is a coherent sheaf}\\ \text{on } \Cc \times S, 
			\text{flat over } S
		\end{array}
		\right\rangle .
	\]
Let $\Bun$ denote the substack of vector bundles on $\Cc$. For a fixed numerical invariant $\betab \in \Knum (\Cc)$, we define the corresponding open and closed substack $\Coh_\betab \subset \Coh$, or simply $\Coh_\betab \subset \Coh$ (resp.\ $\Bun_\betab \subset \Bun$), that consists of sheaves (resp.\ vector bundles) which fibrewise have invariant $\betab$.
If $\betaw = (\betab,L)$ denotes an algebraic invariant, then we denote the fibre of the map  $ \det \pi_* \colon \Bun_\betab \rightarrow \Pic(C)$ over $L$ by $\Bun_\betaw$, which is the stack of vector bundles with numerical invariant $\betab$ and whose pushforward has fixed determinant $L$. Note that $\Bun_\betaw$ has codimension $g_C$ inside $\Bun_\betab$.

By \cite{nironi:2009} the stack $\Coh$ is an algebraic stack locally of finite type over $k$ and it has affine diagonal.
The same holds for the open substacks $\Coh_{\betab}$, $\Bun$ and $\Bun_{\betab}$.
By {\cite[Theorem~2.0.8, Theorem~2.0.10]{Lisanne}}, $\Coh_\betab$ is smooth and irreducible, and thus connected. The proofs of these claims follow an inductive approach similar to \cite[Appendix A]{hoffmann:2010}; however the base case of the induction in the stacky case is nontrivial due to the more complicated geometry of stacks of torsion sheaves. Moreover $\Bun_{\betab}$ has dimension $- \langle \betab , \betab \rangle$, where this Euler pairing can be explicitly computed in terms of the rank, degree and multiplicities of $\betab$ as well as the genus, as in the proof of \cref{proposition:bound for Euler pairing}.

We note that the stack $\Bun_{\betab}$ of vector bundles on $\Cc$ with invariant $\betab$ is an iterated flag bundle over the stack of vector bundles on the coarse moduli space $C$ of $\Cc$ with invariants $\pi_*(\betab)$; however the same is not true for the stack $\Coh_{\betab}$.

\subsection{Stacks of semistable vector bundles on stacky curves}
\label{sec:Mstack-semistable-properties}

We define $\Mstackss:=\Mstacka_{\betab}$ to be the substack of $\Bun_\beta$ parametrising $\alphab$-semistable vector bundles with invariant $\betab$.

\begin{proposition}
\label{proposition: basic properties of Mstackss}
    The stack $\Mstackss$ is open inside $\Mstack$, hence it is algebraic, locally of finite type over $k$, smooth with affine diagonal, and is irreducible if it is non-empty.
\end{proposition}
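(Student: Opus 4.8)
The plan is to verify the three types of properties in turn: (1) algebraicity, finite-typeness, smoothness, and affineness of the diagonal, all of which are inherited from the ambient stack $\Mstack = \Bun_\betab$; (2) openness of the semistability locus; and (3) irreducibility. For (1), recall from the previous subsection that $\Bun_\betab$ is algebraic, locally of finite type over $k$, smooth with affine diagonal, and irreducible; since open substacks inherit all of these local properties, it suffices to establish (2). The claim that $\Mstackss$ is moreover \emph{of finite type} (not merely locally of finite type) is the one extra input beyond openness, and it relies on the hypothesis that $\alphab$ is a generating numerical invariant; I would defer the boundedness argument to the relevant statement established elsewhere in the paper (the introduction advertises that this condition ensures $\Mstackss$ is of finite type), or prove it here via a standard boundedness argument for semistable sheaves with bounded slope on $\Cc$, reducing via $\pi_*$ and a generating sheaf twist to boundedness of semistable sheaves on the coarse curve $C$.

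The heart of the proof is openness of $\Mstackss$ in $\Bun_\betab$. First I would fix the standard reformulation: for a vector bundle $E$ on $\Cc$ with invariant $\betab$, $\alphab$-semistability is equivalent to the nonexistence of a subsheaf $F \subset E$ with $\mu_\alphab(F) > \mu_\alphab(\betab)$, and by \cref{remark: slope from pairing} (after using \cref{lemma how to modify to orthogonal alpha} to reduce to the case $\langle\alphab,\betab\rangle = 0$, or more simply by working with the normalized slope directly) this is controlled by the Euler pairing. Since $\mu_\alphab$ takes values in a discrete set and the rank and $\alphab$-degree of any potential destabilizing subsheaf of a fixed $E$ are bounded (the rank is bounded by $\rank\betab$ and the $\alphab$-degree by the slope inequality together with the fact that $\deg_\alphab$ of a subsheaf is bounded above for it to destabilize), the destabilizing subsheaves that need to be tested live in a bounded family. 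One then runs the usual relative-Quot-scheme argument: over a test scheme $S$, the locus of $s \in S$ where $E_s$ is \emph{not} $\alphab$-semistable is the image of a proper morphism from (a union of) relative Quot schemes parametrizing quotients $E_s \twoheadrightarrow Q$ with the destabilizing numerical type, hence is closed; its complement is the semistable locus, which is therefore open. This is the standard openness-of-semistability argument (as in Maruyama, or Nironi \cite{nironi:2009} in the stacky setting), adapted using the $\alphab$-degree in place of the usual degree; the only stacky subtlety is that Quot schemes on $\Cc$ are still projective over the base, which is exactly what generating sheaves / the results of \cite{nironi:2009, olsson-starr:arxiv} guarantee.

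For irreducibility, the statement is conditional ("if it is non-empty"), so assuming $\Mstackss \neq \emptyset$: it is a non-empty open substack of $\Bun_\betab$, which is irreducible, and a non-empty open substack of an irreducible stack is irreducible. The main obstacle among these steps is really the boundedness needed for finite-typeness: openness itself is routine once one has the Quot-scheme machinery, but controlling that the destabilizing subobjects form a bounded family — and in particular that only finitely many numerical types of destabilizing quotient need to be considered — is where the generating-invariant hypothesis does genuine work, via the translation between $\deg_\alphab$ and ordinary degree on $C$ through $\pi_*$ and a twist by a generating sheaf. I would structure the write-up so that this boundedness is cited from the appropriate earlier or later statement and the rest is dispatched quickly by the "open substack of smooth irreducible is smooth irreducible" principle.
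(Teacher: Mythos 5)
Your proposal is correct and follows essentially the same route as the paper, which simply inherits algebraicity, local finite-typeness, smoothness and affine diagonal from the ambient irreducible stack $\Bun_\betab$ and cites the standard bounded-family-of-destabilizing-quotients/Quot-scheme argument (Huybrechts--Lehn, Nironi) for openness of semistability; your fleshed-out version of that argument is exactly what those references contain. The only superfluous part is your discussion of finite-typeness: the proposition claims only \emph{locally} of finite type, which is automatic for an open substack, and boundedness is indeed handled separately in \cref{proposition: Mstackss of stacky curve is finite type}.
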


The fact that semistability is an open property follows from standard arguments (for example, see \cite[Proposition 2.3.1]{huybrechts-lehn:2010} or \cite[Proposition 3.2.11]{alper-etal:2022}) applied to the case of stacky curves. This is pointed out in {\cite[Proposition 4.15, Corollary 4.16]{nironi:2009}}, but it is also claimed that stability is open, which does not hold in general if $k$ is not algebraically closed as explained in the following remark; however geometric stability is open.

\begin{remark}
\label{remark: example of stability not open}
Over the non-algebraically closed field $\RR$, stability is not an open condition, even for non-stacky curves.
Consider the family of curves $C \subset \PP^2_{\RR} \times (\AA^1_{\RR} \setminus \{0\})$ defined by the equation $x^2 + y^2 = t z^2 $, where $t$ is the coordinate on $\AA^1$.
	Let $H$ be the class of a hyperplane section of $\PP^2$
	and consider the vector bundle $E$ on $C$ defined as the non-trivial extension
	$0 \to \Oc(-H) \to E \to \Oc \to 0$.
	When $t$ is not a positive real number, the bundle $E_t$ is stable.
	However, when $t>0$, we have $C_t \cong \PP^1_{\RR}$ and $E_t = \Oc(-1) \oplus \Oc(-1)$.
	We now notice that the set $\{ t> 0\} \subset \AA^1_{\RR}$ is not constructible, and in particular, not open.
\end{remark}

We now wish to prove that the stack of semistable bundles with fixed invariant is of finite type.
Before we can do that, we cite a preliminary result about quot schemes for Deligne--Mumford stacks and prove that semistable vector bundles on stacky curves can be expressed as quotients.

\begin{theorem}{\cite[Theorem 1.5]{olsson-starr:2003}}
\label{theorem: OS quot stacks are projective varieties}
	Let $\Xc$ be a tame separated Deligne-Mumford stack of finite type over $k$. Assume that $\Xc$ is a global quotient and that its coarse moduli space $X$ is a projective variety.
	Let $E$ be a quasi-coherent sheaf on $\Xc$.
	Define the quot stack $Q$ to be the stack whose fibre over a base $B$ are groupoids of locally finitely presented quotients of $E$ that are flat and with proper support over $B$.
	Then the connected components of $Q$ are projective varieties.
\end{theorem}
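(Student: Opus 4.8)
The plan is to reduce the statement to Grothendieck's classical theorem that the connected components of the $\mathrm{Quot}$ scheme of a coherent sheaf on a projective $k$-scheme are projective. The bridge is the coarse moduli morphism $\pi\colon\Xc\to X$ together with a \emph{generating sheaf}: since $\Xc$ is a tame Deligne--Mumford stack that is a global quotient with projective coarse space, one may choose a locally free sheaf $\Gc$ on $\Xc$ for which the natural map $\pi^*\pi_*\localHom(\Gc,F)\otimes\Gc\to F$ is surjective for every quasi-coherent $F$ (for instance the sheaf associated to the regular representation in a presentation $\Xc=[Z/\mathrm{GL}_n]$). Put $F_{\Gc}(-)\coloneqq\pi_*\localHom(\Gc,-)$. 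Because $\Xc$ is tame, $\pi_*$ is exact, so $F_{\Gc}\colon\QCoh(\Xc)\to\QCoh(X)$ is exact; moreover it is faithful (if $F_{\Gc}(G)=0$ then $\pi^*F_{\Gc}(G)\otimes\Gc\to G$ is both surjective and zero, so $G=0$), it sends $B$-flat sheaves to $B$-flat sheaves (as $\Gc$ is locally free and, by tameness, $\pi_*$ is a direct summand of the identity on the relevant modules), and it sends sheaves with proper support over $B$ to such sheaves (as $\pi$ is proper).

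First I would build a morphism $\Phi\colon Q\to\mathrm{Quot}_X(F_{\Gc}(E))$ sending a $B$-flat quotient $q\colon E_B\twoheadrightarrow F$ with proper support to $F_{\Gc}(q)\colon F_{\Gc}(E)_B\twoheadrightarrow F_{\Gc}(F)$; the properties of $F_{\Gc}$ above make this a well-defined $B$-point of the Quot scheme of $X$, and faithfulness of $F_{\Gc}$ shows $\Phi$ is a monomorphism of algebraic spaces. The main obstacle is to prove that $\Phi$ is a \emph{locally closed immersion}, equivalently that the condition ``a given quotient of $F_{\Gc}(E)$ is $F_{\Gc}$ of a flat quotient of $E$'' is, Zariski-locally on $\mathrm{Quot}_X(F_{\Gc}(E))$, cut out by locally closed conditions, and that the quotient of $E$ is then recovered functorially. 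The recovery uses the counit $\pi^*F_{\Gc}(F)\otimes\Gc\twoheadrightarrow F$: over the universal quotient $F_{\Gc}(E)_{\mathrm{univ}}\twoheadrightarrow\mathcal{Q}$ on $X\times\mathrm{Quot}_X(F_{\Gc}(E))$ one pulls back to $\Xc$, tensors by $\Gc$, composes with $E_{\mathrm{univ}}\to\pi^*F_{\Gc}(E)_{\mathrm{univ}}\otimes\Gc$, and restricts to the locus where the image of $E_{\mathrm{univ}}\to\pi^*\mathcal{Q}\otimes\Gc$ is flat over the base with the prescribed invariants and reproduces $\mathcal{Q}$ after applying $F_{\Gc}$; there it is the universal quotient inverting $\Phi$. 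This is the technical core of \cite[Theorem~1.5]{olsson-starr:2003}, whose argument (or the parallel one in \cite{nironi:2009}) I would follow.

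Finally I would restrict to a connected component $Q_0$ of $Q$. The Hilbert polynomial of $F_{\Gc}(F)$ is locally constant in the flat family over $Q$, so $\Phi$ maps $Q_0$ into a single fixed-Hilbert-polynomial $\mathrm{Quot}$ scheme of $X$, which is projective by Grothendieck; with the previous step this exhibits $Q_0$ as a locally closed subscheme of a projective scheme, in particular of finite type over $k$. It then remains to see that $Q_0$ is proper over $k$, which I would establish by the classical saturation argument for the valuative criterion: given a discrete valuation ring $R$ with fraction field $K$ and a flat quotient $E_K\twoheadrightarrow F_K$ over $\Xc_K$, let $\widetilde{F}$ be the image of $E_R$ in $j_*F_K$ with $j\colon\Xc_K\hookrightarrow\Xc_R$ the inclusion; then $\widetilde{F}$ is $R$-torsion-free, hence $R$-flat, has support closed in the proper stack $\Xc_R$ and hence proper over $R$, and $E_R\twoheadrightarrow\widetilde{F}$ is the required extension, unique by separatedness of $\Xc$. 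A proper monomorphism into the projective scheme $\mathrm{Quot}_X(F_{\Gc}(E))$ is a closed immersion, so $Q_0$ is a closed subscheme of a projective $k$-scheme, hence a projective variety, as claimed.
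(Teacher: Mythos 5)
This statement is not proved in the paper at all: it is imported verbatim as \cite[Theorem 1.5]{olsson-starr:2003}, so there is no in-paper argument to compare against. Your sketch is a faithful reconstruction of the Olsson--Starr strategy: choose a generating sheaf $\Gc$ (which exists because $\Xc$ is a tame global quotient with projective coarse space), use that tameness makes $F_{\Gc}=\pi_*\localHom(\Gc,-)$ exact, faithful, flatness-preserving and support-proper, embed the stacky Quot functor into Grothendieck's $\mathrm{Quot}_X(F_{\Gc}(E))$, bound each connected component by the constancy of the modified Hilbert polynomial, and upgrade a proper monomorphism into a projective scheme to a closed immersion via the valuative criterion with the usual saturation $\widetilde{F}=\operatorname{im}(E_R\to j_*F_K)$. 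All of the individual claims you make about $F_{\Gc}$ check out (in particular the monomorphism claim follows from exactness plus faithfulness applied to $(K_1+K_2)/K_i$).

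Two caveats. First, your proposal is not self-contained at exactly the step you flag: that $\Phi$ is representable by locally closed immersions is the genuinely hard content of the theorem, and you defer it to the very reference being proved; as a standalone proof this is the hole, though as a reconstruction of the cited argument it is an accurate description of where the work lies. Second, a small attribution slip: uniqueness of the extension in the valuative criterion comes from $R$-flatness of the extended quotient (so that $F_i\hookrightarrow j_*F_K$ forces $F_i=\widetilde{F}$), i.e.\ from separatedness of the Quot functor, not from separatedness of $\Xc$ itself. Neither point affects the correctness of the overall outline.
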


\begin{lemma}
\label{lemma: preliminaries for boundedness of semistability}
	Let $\alphab \in \Knum (\Cc)$ be a generating numerical invariant, represented by a generating sheaf $E$. Let $\betab \in \Knum(\Cc)$ be another invariant.
	\begin{enumerate}[label=(\roman*)]
		\item
		\label{item: slopes of subs are bounded}
		For any sheaf $F$ on $\Cc$, there exists $\mu_{\alphab,\max}(F)$ such that for all subbundles
		$F' \subset F$, we have $\mu_\alphab (F') \leq \mu_{\alphab,\max}(F)$.
		\item
		\label{item: Ext1-vanishing for high enough slopes}
    	If $F$ is an $\alphab$-semistable sheaf with $\mu_\alphab (F) > \mu_{\alphab,\max}(E \otimes \omega_\Cc)$,
		then $\Ext^1 (E , F) = 0$.
		\item
		\label{item: global generation for high enough slopes}
		If $F$ is an $\alphab$-semistable sheaf with $\mu_\alphab (F) > \mu_{\alphab,\max}(E \otimes \omega_\Cc) + \frac{\rank (E)}{\rank (F)}$, then the map $\ev \colon \Hom(E , F) \otimes E \to F$ is surjective.
	\end{enumerate}
\end{lemma}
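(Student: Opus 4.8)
The plan is to prove the three parts in order, using part~(i) in the proof of part~(ii), and part~(ii) together with the generation property of $E$ in the proof of part~(iii).

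For part~(i) I would reduce to classical boundedness on the coarse curve $C$. For a subbundle $F'\subseteq F$, the $E$-degree formula (\cref{theorem: Edegreeformula}) gives
\[
\mu_\alphab(F')=\rank(E)\,\frac{\deg(\pi_*F')}{\rank(F')}+\rank(E)\sum_{p}\sum_{i=1}^{e_p-1}w_{p,i}(E)\,\frac{m_{p,i}(F')}{\rank(F')}.
\]
Since $E$ is generating the weights $w_{p,i}(E)$ are non-negative, and since $F'$ is locally free its multiplicities at each $p$ are non-negative and sum to $\rank(F')$, so the second summand is at most $\rank(E)\sum_p\max_i w_{p,i}(E)$, independently of $F'$. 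As $\pi_*$ is exact, $\pi_*F'$ is a subsheaf of $\pi_*F$ of rank $\rank(F')$, whence $\deg(\pi_*F')/\rank(F')\le\mu_{\max}(\pi_*F)$, the usual maximal slope of the coherent sheaf $\pi_*F$ on $C$. This produces the explicit bound $\mu_{\alphab,\max}(F)\coloneqq\rank(E)\,\mu_{\max}(\pi_*F)+\rank(E)\sum_p\max_i w_{p,i}(E)$, which moreover bounds $\mu_\alphab$ of arbitrary subsheaves, since saturating a subsheaf only increases its $\deg_\alphab$.

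For part~(ii), \cref{theorem: Serre duality} applied to $\localHom(E,F)=E^\vee\otimes F$ gives a natural isomorphism $\Ext^1(E,F)\cong\Hom(F,E\otimes\omega_\Cc)^\vee$. If $\psi\colon F\to E\otimes\omega_\Cc$ were nonzero, its image $I$ would be a nonzero quotient of the $\alphab$-semistable $F$, hence $\mu_\alphab(I)\ge\mu_\alphab(F)$, while $I\subseteq E\otimes\omega_\Cc$ forces $\mu_\alphab(I)\le\mu_{\alphab,\max}(E\otimes\omega_\Cc)$ by part~(i); this contradicts the hypothesis, so $\Hom(F,E\otimes\omega_\Cc)=0$ and $\Ext^1(E,F)=0$. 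For part~(iii), suppose the evaluation map is not surjective. By Nakayama it fails to be surjective at some closed point $y_0\in\Cc$, so the images of $E|_{y_0}$ under all $\phi\colon E\to F$ span a proper subspace $W\subsetneq F|_{y_0}$ (a proper graded subspace if $y_0$ is stacky). Choose a length-one skyscraper quotient $q\colon F\twoheadrightarrow T$ at $y_0$ factoring through $F|_{y_0}/W$ (and living in one graded degree if $y_0$ is stacky); then every $E\to F$ composes to zero with $q$, so the $\Hom(E,-)$ sequence of $0\to F_1\to F\to T\to 0$, with $F_1\coloneqq\ker q$, shows the connecting map $\Hom(E,T)\to\Ext^1(E,F_1)$ is injective. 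Since $E$ is generating, $\Hom(E,T)\ne 0$, so $\Ext^1(E,F_1)\ne 0$; and since part~(ii) gives $\Ext^1(E,F)=0$ while $\Ext^1(E,T)=0$, the same sequence yields $\Ext^1(E,F_1)\cong\Hom(E,T)$, so by Serre duality $\dim_k\Hom(F_1,E\otimes\omega_\Cc)=\dim_k\Hom(E,T)$. On the other hand $F_1$ is an elementary modification of $F$, so $\rank(F_1)=\rank(F)$ and $\mu_\alphab(F_1)=\mu_\alphab(F)-\deg_\alphab(T)/\rank(F)\ge\mu_\alphab(F)-\rank(E)/\rank(F)>\mu_{\alphab,\max}(E\otimes\omega_\Cc)$ by hypothesis (using $\deg_\alphab(T)\le\rank(E)$ at a reduced point). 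Were $F_1$ $\alphab$-semistable, the argument of part~(ii) applied to $F_1$ would give $\Hom(F_1,E\otimes\omega_\Cc)=0$, contradicting the above.

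The obstacle is that $F_1$ need not be $\alphab$-semistable, which happens when $F$ is only strictly semistable, so the pure slope argument is not enough; to finish I would argue, as in Esteves \cite{esteves:1999} and Esteves--Popa \cite{esteves-popa:2004}, by a dimension count. Any nonzero $\psi\colon F_1\to E\otimes\omega_\Cc$ factors as $F_1\twoheadrightarrow I\hookrightarrow E\otimes\omega_\Cc$ with $I=\im\psi$; as a quotient of $F_1$ and a subsheaf of $E\otimes\omega_\Cc$, $I$ has $\mu_\alphab(I)$ confined to the band between $\mu_{\alphab,\min}(F_1)$ (itself bounded below in terms of $\mu_\alphab(F)$ and $\rank(E)$ via the elementary modification) and $\mu_{\alphab,\max}(E\otimes\omega_\Cc)$, and $\rank(I)\le\rank(F)$; hence the possible numerical classes of $I$ form a bounded set, and estimating $\dim_k\Hom(F_1,I)$ and $\dim_k\Hom(I,E\otimes\omega_\Cc)$ uniformly over this set (via Quot-scheme and Schubert-variety bounds, as in loc.\ cit.) shows that the locus of such $\psi$ inside $\Hom(F_1,E\otimes\omega_\Cc)$ has dimension strictly smaller than $\dim_k\Hom(E,T)=\dim_k\Hom(F_1,E\otimes\omega_\Cc)$ --- impossible, since every element of $\Hom(F_1,E\otimes\omega_\Cc)$ lies in that locus. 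Carrying out this count, and tracking through it the precise constant $\rank(E)/\rank(F)$, is the main technical point.
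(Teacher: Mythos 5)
Parts (i) and (ii) are correct and match the paper's (much terser) argument: boundedness of $\deg\pi_*F'$ and of the multiplicity contribution for (i), and the image-of-a-nonzero-map argument plus Serre duality for (ii).

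Part (iii) is where you diverge, and there is a genuine gap. Your elementary modification $F_1=\ker(F\twoheadrightarrow T)$ destroys semistability, as you note, and the Esteves--Popa-style dimension count you invoke to repair this does not apply here: those counts bound the codimension of a bad locus inside a \emph{moduli stack} of bundles $V$, i.e.\ they show a \emph{generic} member of a family has a vanishing Hom-space. For your single fixed sheaf $F_1$ there is no parameter to vary --- the set of nonzero $\psi\in\Hom(F_1,E\otimes\omega_\Cc)$ is the complement of the origin in a fixed vector space, and stratifying it by the image of $\psi$ cannot force its dimension below $\dim\Hom(F_1,E\otimes\omega_\Cc)$, so no contradiction can come out of that count. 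The paper avoids the problem entirely by twisting instead of modifying: for a closed point $x$ of order $e_x$, the sheaf $F(-x)=F\otimes\Oc_\Cc(-x)$ (twist by the full, coarse point) has the \emph{same multiplicities} as $F$ and is again $\alphab$-semistable, with $\mu_\alphab(F(-x))=\mu_\alphab(F)-\rank(E)/\rank(F)$; part (ii) then gives $\Ext^1(E,F(-x))=0$ directly, no dimension count needed. From the surjection $\Ext^1(E,F(-x))\twoheadrightarrow\Ext^1(E,F(-\tfrac{1}{e_x}x))$ one gets $\Ext^1(E,F(-\tfrac{1}{e_x}x))=0$, hence $\Hom(E,F)\to\Hom(E,F_{\Gc_x})$ is surjective from the sequence $0\to F(-\tfrac{1}{e_x}x)\to F\to F_{\Gc_x}\to 0$, and the generating property of $E$ (surjectivity of $\pi^*\pi_*\localHom(E,\blank)\otimes E\to\blank$ on the residual gerbe) upgrades this to surjectivity of the evaluation map fibrewise at $x$. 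Your Nakayama reduction to points and your use of the generating hypothesis at the stacky fibre are the right ingredients; the missing idea is to realise the slope drop by a line-bundle twist (which preserves semistability) rather than by an elementary modification (which does not).
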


\begin{proof}
	Part \ref{item: slopes of subs are bounded} follows as $F'$ is a subobject of $F$, so the degree and multiplicities of $F'$ are bounded above, and the rank is non-negative.

	We now prove part \ref{item: Ext1-vanishing for high enough slopes}.
    By semistability of $F$ and the assumption on the slopes, it follows that $\Hom(F, E \otimes \omega_{\Cc}) = 0$.
    Then Serre duality implies that $\Ext^1 (E, F) = 0$.

	In order to prove part \ref{item: global generation for high enough slopes}, we will adapt a classical argument (for example, see \cite[Chapter 5]{newstead:tata}).  For any point $x \in \Cc$, let $e_x$ be the order of $x$ (which will be equal to 1 if $x$ is chosen to be non-stacky). Note that tensoring by $\Oc(-x)$ doesn't change the multiplicities, so $F(-x)$ is still semistable and $\mu_E (F(-x)) = \mu_E(F) - \frac{\rank (E)}{\rank (F)}$, hence by part \ref{item: Ext1-vanishing for high enough slopes}, we have $\Ext^1 (E, F(-x)) = 0$.
    The long exact sequence obtained from
    \[
    0 \longrightarrow F(-x) \longrightarrow F \left( -\frac{1}{e_x} x \right) \longrightarrow T \longrightarrow 0
    \]
    by applying $\Hom(E, \blank)$, where $T$ is the quotient torsion sheaf,
    implies that $\Ext^1 (E, F( -\frac{1}{e_x} x ))$ admits a surjection from $\Ext^1 (E, F(-x))$, hence it also vanishes.

	Let $\iota_x \colon \Gc_x \to \Cc$ denote the inclusion of the residual gerbe (possibly $\Gc_x = \Spec k$), and set $F_{\Gc_x} \coloneqq \iota_{x*} \iota_x^* F$.
	Applying $\Hom (E, \blank)$ to the short exact sequence

	\[
	0 \longrightarrow F \left( - \frac{1}{e_x} x \right) \longrightarrow F \longrightarrow F_{ \Gc_x } \longrightarrow 0
	\]
	yields an exact sequence
	\[
	\Hom(E, F) \longrightarrow \Hom (E, F_{ \Gc_x }) \longrightarrow \Ext^1\left( E, F \left( - \frac{1}{e_x} x \right) \right) .
	\]
	We have already proved that $\Ext^1(E, F(- \frac{1}{e_x} x)) = 0$, hence we have a surjection
	\[
	f \colon \Hom(E, F) \longrightarrow \Hom(E,F_{\Gc_x}) .
	\]
	We claim that the morphism obtained by adjunction is surjective as well:
	\[
	\ev_x \colon \Hom(E, F) \otimes E \longrightarrow F_{\Gc_x}  .
	\]
	Indeed, pick any vector $v \in F_{\Gc_x}$.
	By adjunction, we have
	\[
	\Hom_\Cc (E, F_x) = \Hom_{\mu_{e_x}} (\iota_x^* E , \iota_x^* F),
	\]
	and since $E$ is generating, there is a morphism of $\ZZ/e\ZZ$-graded vector spaces $g \colon i_x^* E \to i_x^* F$ such that $v = g(w)$ for some section $w$ in a neighborhood of $x$.
	Since $f$ is surjective, there is a morphism $h \colon E \to F$ such that $f(h) = g$.
	But now we observe that $v = \ev_x (h \otimes w)$, and we conclude that $\ev$ is surjective.
\end{proof}

\begin{proposition}
\label{proposition: Mstackss of stacky curve is finite type}
	If $\alphab$ is a generating numerical invariant, then $\Mstackss$ is of finite type.
\end{proposition}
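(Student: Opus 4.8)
The plan is to use the previous lemma to exhibit every $\alphab$-semistable vector bundle $F$ with invariant $\betab$ as a quotient of a single fixed sheaf, up to a twist, and then invoke the projectivity of Olsson--Starr quot stacks. First I would fix a generating sheaf $E$ representing $\alphab$. I then want to replace $\betab$ by a ``large'' twist so that the hypotheses of \cref{lemma: preliminaries for boundedness of semistability}\ref{item: global generation for high enough slopes} are satisfied uniformly. Concretely, pick a non-stacky point $q \in \Cc$ and let $\etab = [\Oc(q)]$; since tensoring by $\Oc(q)$ does not change the rank or multiplicities but raises $\deg_{\alphab}$ by a fixed positive amount (as in the proof of \cref{lemma how to modify to orthogonal alpha}), for $N \gg 0$ every $\alphab$-semistable $F$ with invariant $\betab$ has $F(Nq)$ again $\alphab$-semistable (semistability is preserved by tensoring by a pullback of a line bundle on $C$) with invariant $\betab' := \betab \otimes \etab^{\otimes N}$ satisfying
\[
\mu_\alphab(\betab') > \mu_{\alphab,\max}(E \otimes \omega_\Cc) + \frac{\rank E}{\rank \betab}.
\]
Note that this threshold depends only on $\alphab$, $\betab$ and $\Cc$, not on $F$.

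Next, by \cref{lemma: preliminaries for boundedness of semistability}\ref{item: global generation for high enough slopes}, the evaluation map $\Hom(E, F(Nq)) \otimes E \to F(Nq)$ is surjective, and by part \ref{item: Ext1-vanishing for high enough slopes} we have $\Ext^1(E, F(Nq)) = 0$, so by Riemann--Roch (\cref{theorem: stacky RR}) the dimension $h := \hom(E, F(Nq)) = \langle \alphab, \betab' \rangle$ is a fixed number independent of $F$. Hence $F(Nq)$ is a quotient of $E^{\oplus h}$, i.e. every $\alphab$-semistable $F$ with invariant $\betab$ (after the fixed twist) defines a $k$-point of the quot stack $Q$ of quotients of $E^{\oplus h}$ on $\Cc$ that are flat with proper (indeed full) support and have invariant $\betab'$. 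This $Q$ is a locally closed substack of the quot stack of \cref{theorem: OS quot stacks are projective varieties} cut out by fixing the Hilbert-type data (rank, degree, multiplicities): the stacky curve $\Cc$ is tame, separated, a global quotient with projective coarse space $C$, so that theorem applies and the relevant connected components of $Q$ are of finite type (in fact projective varieties). Since the invariant $\betab'$ is fixed, only finitely many connected components are involved, so the substack of $Q$ we care about is of finite type.

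Finally, there is a natural smooth surjection from (an open substack of) $Q$ onto $\Mstackss$: an $h$-dimensional space of sections of $E \otimes (-)$ together with the quotient $E^{\oplus h} \twoheadrightarrow F(Nq)$ recovers $F$ after untwisting, and the fibres are torsors under $\mathrm{GL}_h$ acting on the choice of basis of $\Hom(E, F(Nq))$, which is of finite type. Being a quotient (by a smooth atlas of finite type) of a finite-type stack, $\Mstackss$ is of finite type over $k$. The main obstacle is the bookkeeping in the first step: one must check that the twist $N$ and the resulting section dimension $h$ can genuinely be chosen uniformly over the whole family of semistable bundles, which is exactly what part \ref{item: slopes of subs are bounded} of the lemma guarantees by bounding $\mu_{\alphab,\max}(E\otimes\omega_\Cc)$ independently of $F$; the rest is assembling known algebraicity and finiteness statements.
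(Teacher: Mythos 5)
Your overall strategy is the same as the paper's: twist uniformly so that \cref{lemma: preliminaries for boundedness of semistability} applies to every semistable $F$ at once, use Ext-vanishing to fix the dimension $h$ of $\Hom(E,F(Nq))$, realise each $F(Nq)$ as a quotient of $E^{\oplus h}$, and conclude via \cref{theorem: OS quot stacks are projective varieties}. Those reductions are fine. The genuine gap is the sentence ``Since the invariant $\betab'$ is fixed, only finitely many connected components are involved.'' \cref{theorem: OS quot stacks are projective varieties} only says that \emph{each} connected component of $Q$ is a projective variety; it gives no control on how many components contain quotients with invariant $\betab'$. The locus of quotients with fixed class in $\Knum(\Cc)$ is indeed open and closed (the invariant is locally constant in flat families), hence a union of connected components, but the finiteness of that union is itself a boundedness assertion of exactly the same nature as the proposition you are proving, so stating it without proof begs the question. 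It is not covered by part (i) of the lemma either: that part bounds slopes of subsheaves of a fixed $F$, not the number of components of the Quot stack.

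The paper closes precisely this hole using \cref{proposition: basic properties of Mstackss}. Let $Q^{\circ}\subset Q$ be the open locus where the quotient is semistable, which surjects onto $\Mstackss$. All presentations $E^{\oplus h}\twoheadrightarrow F(Nq)$ of a fixed $F$ differ by a change of basis of $\Hom(E,F(Nq))$, so each fibre of $Q^{\circ}\to\Mstackss$ is connected; consequently the images of the intersections of $Q^{\circ}$ with the various connected components of $Q$ form a \emph{disjoint} open cover of $\Mstackss$. Since $\Mstackss$ is connected (it is irreducible by \cref{proposition: basic properties of Mstackss}), a single connected component $Q'$ of $Q$ already surjects onto $\Mstackss$, and that one component is projective by \cref{theorem: OS quot stacks are projective varieties}, whence boundedness. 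If you insert this step (or an independent boundedness argument, e.g.\ comparing with the projective Quot scheme of quotients of $\pi_*(E^{\oplus h})$ on the coarse curve $C$ via the exact functor $\pi_*$), your proof is complete; as written, the key finiteness claim is unjustified.
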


\begin{proof}
Fix an ample line bundle $\Oc_C(1)$ on the good moduli space $\pi \colon \Cc \to C$, and for an arbitrary sheaf $F$ on $\Cc$, denote by $F(n)$ the twist $F \otimes \pi^* \Oc_C(n)$.
	Pick a generating bundle $E$ of class $\alphab$.
	For a large enough $m \in \ZZ$, we have that
	\[
		\mu_E (F(m)(-x)) > \mu_{E,\max}(E \otimes \omega_\Cc)
	\]
	for every $F \in \Mstackss(k)$ and $x \in \Cc$. Therefore, by \cref{lemma: preliminaries for boundedness of semistability}, part \ref{item: global generation for high enough slopes}, we have that for every $F \in \Mstackss$, the following map is surjective:
	\[
	\Hom (E , F(m)) \otimes E \to F(m) .
	\]
	Combining the inequality
	\[
		\mu_E (F(m)) = \mu_E (F(m)(-x)) + \frac{\rank (E)}{\rank (F)} > \mu_{E,\max}(E \otimes \omega_\Cc),
	\]
	with \cref{lemma: preliminaries for boundedness of semistability}
	\ref{item: Ext1-vanishing for high enough slopes}, we deduce that the dimension of $\Hom (E , F(m))$ is independent of $F \in \Mstackss$; call this dimension $N$.
	Therefore, every $F \in \Mstackss$ can be written as a quotient
	\[
	E(-m) ^ {\oplus N} \to F ,
	\]
	or in other words, realised as an element of the quot scheme $Q$ of quotients of $E(-m) ^ {\oplus N}$ that have a fixed numerical invariant $\betab$.
	By openness of semistability (\cref{proposition: basic properties of Mstackss}), we find an open subscheme $Q^\circ \subset Q$ that surjects onto $\Mstackss$.
	Since $\Mstackss$ is connected by \cref{proposition: basic properties of Mstackss}, we find a connected component $Q'$ of $Q$ such that $Q' \cap Q^\circ$ still surjects on $\Mstackss$.
	But by \cref{theorem: OS quot stacks are projective varieties}, $Q'$ is a projective variety, and
	this means that $\Mstackss$ is bounded.
\end{proof}

% ---
\section{Existence and properties of good moduli space}
\label{sec:GMS}
% ---

In this section we apply the existence criterion of Alper, Halpern-Leistner and Heinloth \cite[Theorem A]{AHLH:existence} to prove that the stack $\Mstackss$  admits a good moduli space in the sense of Alper \cite{alper:2013}. In this section, we will assume that $\car(k)=0$, as we only apply the existence criterion in characteristic zero due to the difference in positive characteristic between linearly reductive and reductive stabilisers  (which requires a weaker notion of an adequate moduli space). In this section, $\alphab$ will denote a generating numerical invariant.

\subsection{Applying the existence theorem}
\label{sec:existence-gms}

It follows from \cref{proposition: basic properties of Mstackss} and \cref{proposition: Mstackss of stacky curve is finite type} that, if $\alphab$ is generating, then $\Mstackss$ is an algebraic stack of finite type over $k$ and with affine diagonal. Under these assumptions we are in the position to apply the following existence criterion for good moduli spaces. We will only state this criterion in characteristic zero, as we cannot verify the additional local reductivity assumption required in positive characteristic to obtain the \'{e}tale local quotient description as in \cite{alper-hall-rydh} when the stabilisers of closed points are linearly reductive (in characteristic zero, this is always the case, as S-completness implies these stabilisers are reductive).

\begin{theorem}[Existence criteria for stacks,  {\cite[Theorem A]{AHLH:existence}}]
Let $\mathfrak{X}$ be an algebraic stack of finite type over a characteristic zero field $k$ with affine diagonal. Then $\mathfrak{X}$ admits a separated good moduli space if and only if $\mathfrak{X}$ is $\Theta$-complete and S-complete.
\end{theorem}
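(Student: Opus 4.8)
This is the central result of \cite{AHLH:existence}, so I will only sketch the strategy of its proof. The plan is to establish the two implications separately, the reverse one (that $\Theta$-completeness together with S-completeness is sufficient) being by far the more substantial. For the forward implication, assume $\mathfrak{X}$ admits a separated good moduli space $q \colon \mathfrak{X} \to X$. One must extend maps from the punctured test stacks $\Theta_R \setminus \{0\}$ and $\overline{\mathrm{ST}}_R \setminus \{0\}$ (for a DVR $R$) across their closed points. Compute the good moduli spaces of $\Theta_R = [\AA^1_R/\Gm]$ and of $\overline{\mathrm{ST}}_R$ explicitly; given a map from the punctured test stack, push it forward along $q$, use separatedness and the explicit (proper) geometry of these good moduli spaces to extend the induced map on coarse spaces, and then lift the extension back to $\mathfrak{X}$ using that $q$ is universally closed together with the local structure of good moduli space morphisms. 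Uniqueness of the extension is forced by separatedness of $X$.

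For the reverse implication, the strategy is to build $X$ étale-locally and glue. First, S-completeness forces the stabiliser of every closed point of $\mathfrak{X}$ to be linearly reductive --- otherwise one produces a non-extendable map from $\overline{\mathrm{ST}}_R \setminus\{0\}$ --- which in characteristic zero is the same as reductive. One may then invoke the étale-local structure theorem for stacks with linearly reductive stabilisers \cite{alper-hall-rydh}: around a closed point $x$ with stabiliser $G_x$ there is an affine étale neighbourhood of the form $[\Spec A / G_x] \to \mathfrak{X}$. Each $[\Spec A/G_x]$ has the good moduli space $\Spec A^{G_x}$, giving candidate local good moduli spaces. The heart of the argument is to arrange these neighbourhoods to be \emph{saturated} with respect to the putative map $q$, i.e.\ to contain the closure of the unique closed orbit in the closure of each of their points; this is precisely where $\Theta$- and S-completeness are used, as they control the one-parameter degenerations, and pairs of degenerations with common limit, that govern orbit-closure behaviour. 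With saturated neighbourhoods in hand, the induced maps $\Spec A^{G_x} \to \Spec B^{G_y}$ on overlaps are étale, so the $\Spec A^{G_x}$ form an étale cover of an algebraic space $X$, and $q \colon \mathfrak{X}\to X$ is checked to be a good moduli space by verifying the defining properties étale-locally. Separatedness of $X$ then follows from S-completeness via the valuative criterion.

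The main obstacle is this saturation-and-descent step. An arbitrary étale neighbourhood $[\Spec A/G_x]\to \mathfrak{X}$ need not be saturated, and without saturation the maps on would-be good moduli spaces need not even be well defined, let alone étale; constructing saturated neighbourhoods and showing the resulting descent datum is effective is exactly what forces one to use both hypotheses in an essential way --- $\Theta$-completeness as the intrinsic analogue of Kempf's optimal-destabiliser and the Hilbert--Mumford criterion, and S-completeness as the separatedness input ensuring that two degenerations with the same limit are related by the torus action.
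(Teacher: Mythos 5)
This theorem is imported verbatim from \cite{AHLH:existence} (it is Theorem A there); the paper gives no proof and uses it as a black box, so there is no argument in the text to compare yours against. As a summary of the proof in the cited reference, your sketch is broadly faithful: the reverse implication does proceed by showing that S-completeness forces geometric reductivity (hence, in characteristic zero, linear reductivity) of stabilisers of closed points, invoking the \'etale local structure theorem of \cite{alper-hall-rydh}, refining the resulting charts $[\Spec A/G_x]$ so that they send closed points to closed points and induce \'etale maps on the local good moduli spaces $\Spec A^{G_x}$ --- the refinement step being where both completeness hypotheses are consumed --- and then gluing by \'etale descent, with separatedness supplied by S-completeness. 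Two imprecisions are worth flagging, neither fatal to a sketch. First, what this paper calls $\Theta$-completeness is called $\Theta$-reductivity in \cite{AHLH:existence}. Second, in the forward implication the good moduli spaces of $\Theta_R$ and of $\ST_R$ are both $\Spec R$, which is affine rather than proper, and the induced map $\Theta_R \setminus \{\bm{0}\} \to X$ extends for the essentially trivial reason that $\Theta_R \setminus \{\bm{0}\}$ and $\Theta_R$ have the same good moduli space $\Spec R$; the real content is lifting that extension back to $\mathfrak{X}$, which in \emph{loc.\ cit.} is a Hartogs-type extension across the codimension-two point $\bm{0}$ for stacks with affine good moduli space, not an application of universal closedness. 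With those caveats, your outline correctly identifies the architecture of the cited proof.
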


Let us give the definitions of the completeness conditions appearing here, which are valuative criteria involving verifying codimension 2 filling conditions.

\begin{definition}
A stack $\mathfrak{X}$ is $\Theta$-complete (resp.\ S-complete) if for every DVR $R$ with uniformiser $\pi$, every morphisms from $\mathcal{T}_R \setminus \{ \bm{0} \} \rightarrow \mathfrak{X}$ extends to $\mathcal{T}_R$ where
\[ \mathcal{T}_R = \Theta_R \coloneq [ \Spec (R[s])/\mathbb{G}_m] \quad \text{(resp.\ } \quad \mathcal{T}_R = \ST_R \coloneq [\Spec (R[s,t]/\pi - st)/\mathbb{G}_m ] \: \text{)}\]
and $\mathbb{G}_m$ acts on $s$ with weight $+1$ and $t$ with weight $-1$.
\end{definition}

By definition, $\Theta_R$ is the base change of $\Theta := [ \Spec (\ZZ[s])/\mathbb{G}_m]$ to $R$. For a detailed discussion of these conditions, we refer to \cite[$\S$6.8.2]{alper-moduli}. If $\mathfrak{X}$ is a moduli stack of objects in an abelian category, morphisms $\Theta_R \setminus \{ \bm{0} \} \rightarrow \mathfrak{X}$ can be viewed as a family over $R$ with a filtration over the generic fibre $K = \mathrm{Frac}(R)$ whose associated graded object lies in $\mathfrak{X}$, and such a morphism extends to $\Theta_R$ if the filtration and associated graded object extend to the special fibre $\kappa = R/\pi$. Similarly in this abelian setting, a morphism $\ST_R \setminus \{ \bm{0} \} \rightarrow \mathfrak{X}$ can be viewed as two families over $R$ whose generic fibres are isomorphic and this extends to $\ST_R$ if the special fibres admit opposite filtrations whose associated graded objects are isomorphic.

\begin{proposition} The stack $\Mstackss$ admits a separated good moduli space $\Mspacess$.
\end{proposition}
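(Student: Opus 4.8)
The plan is to verify the two valuative criteria of \cite[Theorem A]{AHLH:existence}, namely $\Theta$-completeness and S-completeness, for the stack $\Mstackss$. We already know from \cref{proposition: basic properties of Mstackss} and \cref{proposition: Mstackss of stacky curve is finite type} that $\Mstackss$ is an algebraic stack of finite type over $k$ with affine diagonal, so the existence criterion applies once these two conditions are checked. Both conditions are codimension-$2$ filling statements for maps out of $\Theta_R$ or $\ST_R$, and the standard strategy (as in \cite[\S 7]{AHLH:existence} or \cite[\S 3.4]{alper-etal:2022}) is to reduce them to statements about coherent sheaves on $\Cc \times \Spec R$ together with the fact that $\alphab$-semistability is preserved in limits.

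First I would handle $\Theta$-completeness. A map $\Theta_R \setminus \{\bm 0\} \to \Mstackss$ is the data of a vector bundle $E$ on $\Cc_K$ (where $K = \operatorname{Frac} R$) equipped with a $\ZZ$-indexed descending filtration, together with a vector bundle $\widetilde E$ on $\Cc_\kappa$ with a filtration, such that the generic fibre of $\widetilde E$'s associated graded agrees — more precisely, it is a filtered flat family over the punctured $\Theta_R$. Since $\Cc \times \Spec R$ is a regular scheme and $\Cc_\kappa \times \{0\}$ has codimension $2$, one extends the underlying filtered sheaf by taking the scheme-theoretic (saturated) closure: each step of the filtration on the generic fibre is a subsheaf, and its closure in the family over $R$ is flat over $R$ with the correct invariant $\betab$ on the special fibre. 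One then checks the associated graded pieces remain locally free (using that $\Cc$ is a curve, torsion-freeness over a regular $1$-dimensional base suffices) and, crucially, that each graded piece is still $\alphab$-semistable: this is exactly the openness of semistability combined with the fact that the limit of semistable bundles with invariant $\betab$ in a $1$-parameter family is semistable, which follows from \cref{lemma: preliminaries for boundedness of semistability}\ref{item: slopes of subs are bounded} (boundedness of slopes of subsheaves) and a standard specialisation argument. Hence the filtration and its associated graded extend over $\Theta_R$.

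For S-completeness, a map $\ST_R \setminus \{\bm 0\} \to \Mstackss$ amounts to two vector bundles $E_1, E_2$ on $\Cc_R$ with $\alphab$-semistable fibres and an isomorphism $E_1|_{\Cc_K} \cong E_2|_{\Cc_K}$ of the generic fibres; one must produce a common refinement, i.e.\ a filtration on $(E_1)_\kappa$ and an opposite filtration on $(E_2)_\kappa$ with isomorphic associated gradeds. This is the classical Langton-type argument: view the elementary modifications interpolating between $E_1$ and $E_2$ along the special fibre; these modifications are controlled by subsheaves of $(E_i)_\kappa$, and one builds the filtration from the successive modifications, terminating after finitely many steps because the invariants are discrete and the slopes of subsheaves are bounded. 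The associated graded on each side agrees because both compute the same object after saturating, and semistability of the graded pieces is again ensured by the same boundedness input. This is the content of the Langton-type result alluded to in the introduction.

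The main obstacle I expect is the second part of the $\Theta$-completeness check and the whole of S-completeness: proving that the limiting associated-graded bundle is still $\alphab$-semistable, rather than merely a vector bundle with the right numerical invariant. On a stacky curve this requires knowing that the $\alphab$-slope function behaves well under specialisation, which in turn relies on the fact that $\deg_\alphab$ descends to $\Knum(\Cc)$ (\cref{theorem: stacky RR} and the subsequent remark) and on the boundedness statement \cref{lemma: preliminaries for boundedness of semistability}\ref{item: slopes of subs are bounded}; the stacky structure introduces the multiplicity invariants into the slope, but since these are locally constant in flat families, the argument parallels the classical one. Once semistability of the limits is established, the codimension-$2$ extension of the sheaves themselves is routine regularity of $\Cc \times \Spec R$.
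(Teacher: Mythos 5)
Your overall plan --- verify $\Theta$- and S-completeness and reduce the sheaf-theoretic extension problem to the stack $\Coh$ of all coherent sheaves --- is the same as the paper's. The gap is in the step you yourself flag as the main obstacle: semistability of the filling. For $\Theta$-completeness you justify semistability of the limiting graded pieces by asserting that ``the limit of semistable bundles with invariant $\betab$ in a $1$-parameter family is semistable''. This is false in general (on $\PP^1$ the trivial rank-$2$ bundle degenerates to $\Oc(1)\oplus\Oc(-1)$), and it is precisely why Langton's theorem is non-trivial; openness of semistability and \cref{lemma: preliminaries for boundedness of semistability}\ref{item: slopes of subs are bounded} only bound slopes of subsheaves, they do not force the special fibre of a family of semistables to be semistable. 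The correct argument needs no such principle: since $\Spec R \subset \Theta_R\setminus\{\bm 0\}$, the special fibre $F_\kappa$ of the given family is $\alphab$-semistable \emph{by hypothesis}; the extended filtration steps $F^\ell_\kappa$ are subsheaves of $F_\kappa$ with the same $\alphab$-slope (flatness over $R$ preserves the invariants), hence are semistable, and their graded pieces are then semistable because semistable bundles of fixed slope form an abelian subcategory.

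For S-completeness the same issue recurs and is compounded by a conflation: the ``classical Langton argument'' (elementary modifications repairing a non-semistable special fibre) is what the paper uses for \emph{properness} of the good moduli space in \cref{cor:langton}, not for S-completeness. In the S-completeness check the sheaves $F_\ell$ are already uniquely determined by the S-completeness of $\Coh$, and the real content is showing $\langle\alphab,F_\ell\rangle=0$ for every $\ell$. The paper gets this from a two-sided monotonicity argument: the filtration \eqref{eq:filtration} of $F_\infty|_\kappa$ together with semistability gives $\langle\alphab,F_{\ell-1}\rangle\le\langle\alphab,F_\ell\rangle$, the symmetric filtration of $F_{-\infty}|_\kappa$ gives the reverse inequality, and the resulting equalities make each $F_\ell$ an equal-slope subbundle of the semistable $F_\infty$, hence semistable, so that the graded pieces are semistable and therefore torsion-free. ``The same boundedness input'' does not yield any of this; without an argument of this kind you cannot conclude that the associated graded of the filling is semistable, or even locally free.
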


\begin{proof} By the above existence criterion \cite[Theorem A]{AHLH:existence}, it suffices to prove that $\Mstackss$ is $\Theta$-complete and S-complete. Let $R$ be a discrete valuation ring with residue field $\kappa$, and denote by $\pi$ its uniformiser and by $K$ its fraction field.

We will prove these valuative criteria for $\Mstackss$ by first extending our map $\mathcal{T}_R \setminus \{ \bm{0} \} \rightarrow \Mstackss$ to the stack of coherent sheaves $\Coh$ on $\Cc$. If $\Ac$ is the category of quasi-coherent sheaves on $\Cc$, the stack of coherent sheaves $\Coh$ coincides with the stack  $\Mc_{\Ac}$ introduced in \cite[$\S$7]{AHLH:existence} (see Example 7.1 and Definition 7.8 in \textit{loc.\ cit.}), thus $\Coh$ is S-complete and $\Theta$-complete by Lemmas 7.16 and 7.17 in \textit{loc.\ cit}. Alternatively, one can use the properness of the Quot scheme of sheaves on $\Cc$ (see \cite[Theorem 1.1]{olsson-starr:2003}) to prove that $\Coh$ is $\Theta$-complete.

For $\Theta$-completeness, we consider the stack $\Theta_R := [ \Spec (R[s])/\mathbb{G}_m]$. We identify $\Theta_R \setminus \{ \bm{0} \}$ with $\Spec(R) \underset{\Spec(K)}{\sqcup} \Theta_K$, so a morphism $\Theta_R \setminus \{ \bm{0} \} \rightarrow \Mstackss$ corresponds to a semistable vector bundle $F$ over $\Cc_R$ with a filtration
\[0 = F_{K}^{-m} \subset \cdots \subset F_{K}^{\ell -1} \subset F_{K}^{\ell} \subset F_{K}^{\ell +1} \subset \cdots \subset F_K^{n} = F_K\]
of the generic fibre whose associated graded object $\mathrm{gr}(F_K^\bullet) = \oplus_{\ell} F_{K}^{\ell}/ F_{K}^{\ell-1}$ lies in $ \Mstackss$. In particular, we must have $\mu_{\alphab}(F_{K}^{\ell}) = \mu{\alphab}(F_K)$ and all these sheaves are $\alphab$-semistable. This morphism extends to $\Theta_R$ if the above filtration and associated graded object extends over the special fibre $\kappa$ of $R$ in $ \Mstackss$.
By the discussion above, $\Coh$ is $\Theta$-complete and so we can extend the above morphism to $\Theta_R \rightarrow \Coh$ which gives a filtration  $0 = F^{-M} \subset \cdots \subset F^{\ell -1} \subset F^{\ell} \subset F^{\ell +1} \subset \cdots \subset F^{N} = F$ of coherent sheaves on $\Cc_R$ that restricts to the above filtration of $\alphab$-semistable vector bundles over $\Cc_K$.
Since the subsheaves $F^{\ell}$ are flat over $R$, they have the same $\alphab$-slope as the generic fibre.
Hence we also have $\mu_{\alphab}(F_{\kappa}^{\ell}) = \mu_{\alphab}(F_\kappa)$ over the special fibre and deduce each $F_{\kappa}^{\ell}$ is $\alphab$-semistable from the semistability of $F_\kappa$ using that $F_{\kappa}^{\ell} \subset F_\kappa$ have the same $\alphab$-slope.
As the category of $\alphab$-semistable vector bundles of fixed slope is abelian with the same cokernels and kernels as $\Ac$ (for example, see the appendix of Nori in \cite{Seshadri:VB}), we deduce that $\mathrm{gr}(F^\bullet_\kappa)$ is also $\alphab$-semistable. This proves the image of $\phi$ is contained in $\Mstackss$.

For S-completeness, we consider the stack $\ST_R:= [\Spec(R[s,t]/\pi - st)/\Gm]$. We identify $\ST_R\setminus \{ \bm{0} \}$ with $\Spec(R) \underset{\Spec(K)}{\sqcup}\Spec(R)$, so a morphism $\ST_R \setminus \{ \bm{0} \} \rightarrow \Mstackss$ corresponds to two semistable vector bundles $F_{-\infty}$ and $F_{\infty}$ over $\Cc_R$ with a fixed isomorphism over $\Cc_K$. This extends to $\ST_R$ if
we can find a system of vector bundles $(F_\ell)_{\ell \in \ZZ}$ which fit in a diagram
\begin{equation} \label{eq:S-completeness} \xymatrix{
\dots \ar@/^/[r]^{s_{\ell-2}} & F_{\ell-2} \ar@/^/[r]^{s_{\ell - 1}} \ar@/^/[l]^{t_{\ell - 3}} & F_{\ell-1} \ar@/^/[r]^{s_\ell} \ar@/^/[l]^{t_{\ell - 2}} & F_{\ell} \ar@/^/[r]^{s_{\ell + 1}} \ar@/^/[l]^{t_{\ell - 1}} & F_{\ell+1} \ar@/^/[r]^{s_{\ell + 2}} \ar@/^/[l]^{t_\ell} & F_{\ell+2} \ar@/^/[r]^{s_{\ell + 3}} \ar@/^/[l]^{t_{\ell + 1}} & \dots \ar@/^/[l]^{t_{\ell + 2}},
}
\end{equation} where  \begin{enumerate}[label=(S\arabic*)]
\item \label{it:S1} the maps $s_i$ and $t_i$ are injections such that $s_i \circ t_{i - 1}$ and $t_i \circ s_{i + 1}$ are given by multiplication by $\pi$ (occasionally we will omit the subscripts and denote these maps by $s$ and $t$);
\item \label{it:S2} there exists an $N  \in \ZZ$ such that for every $n \geq N$ one has isomorphisms $F_n \cong F_\infty$ and $F_{-n} \cong F_{-\infty}$ commuting with the morphisms $s_{n + 1} \colon F_n \to F_{n+1}$ and $t_{-n - 1} \colon F_{-n} \to F_{-n-1}$, respectively;
in particular, $s_n$ and $t_{-n}$ are isomorphisms for $n > N$;
\item \label{it:S4} the map $s$ induces an injection $F_{\ell-1}/t(F_\ell) \to F_{\ell}/t(F_{\ell+1})$, and analogously the map $t$ induces an injection $F_{\ell+1}/s(F_{\ell}) \to F_{\ell}/s(F_{\ell-1})$;
\item \label{it:S5} the sheaf
\[ \mathrm{gr}(F) \coloneqq \bigoplus_{ \ell \in \ZZ} \dfrac{F_{\ell}/t(F_{\ell+1})}{s(F_{\ell-1}/t(F_\ell))} \cong  \bigoplus_{ \ell \in \ZZ} \dfrac{F_{\ell}/s(F_{\ell-1})}{t(F_{\ell+1}/s(F_{\ell}))} \cong
 \bigoplus_{ \ell \in \ZZ} \dfrac{F_{\ell}}{s(F_{\ell-1}) + t(F_{\ell+1})}
\] over $\Cc_\kappa$ is an $\alphab$-semistable vector bundle.
\end{enumerate}

Since $\Coh$ is S-complete, we can uniquely find a system of {coherent} sheaves $(F_\ell)_{\ell_\in \ZZ}$ as in \eqref{eq:S-completeness} satisfying conditions \ref{it:S1}, \ref{it:S2} and \ref{it:S4}.
Since the maps $s$ and $t$ are injective, this implies that $F_\ell$ is a vector bundle for every $\ell$ and thus we are left to show that \ref{it:S5} holds. Note that conditions \ref{it:S1}--\ref{it:S4} tell us that
\begin{equation} \label{eq:filtration}
	0 =\dfrac{F_{-\infty}}{F_{-\infty}}= \dfrac{F_{-N-1}}{t(F_{-N})}  \subset \dfrac{F_{-N}}{t(F_{-N+1})} \subset \dots \subset \dfrac{F_{N-1}}{t(F_{N})} \subset \dfrac{F_{N}}{t(F_{N+1})} = \dfrac{F_\infty}{t(F_\infty)} = {F_\infty}|_\kappa
\end{equation}
is a finite filtration of ${F_\infty}|_\kappa$ (and similarly for $F_{-\infty}|_\kappa$). Recall that  $F_\infty$ has numerical invariant $\betab$ and it is $\alphab$-semistable, with $\langle \alphab, \betab\rangle =0$. Combining semistability (as in \cref{remark: slope from pairing}) together with \eqref{eq:filtration}, we obtain
\[ 0 = \langle \alphab, F_\infty \rangle  \geq \left\langle \alphab,  \dfrac{F_{\ell-1}}{t(F_\ell)}\right\rangle = \langle \alphab, F_{\ell-1} \rangle -  \langle \alphab, t(F_\ell) \rangle = \langle \alphab, F_{\ell-1} \rangle -  \langle \alphab, F_\ell \rangle,
\] where the last equality follows from the fact that $t$ is injective.
Thus we have that $\langle \alphab, F_{\ell-1} \rangle \leq  \langle \alphab, F_\ell \rangle$.
Repeating the argument with $F_{-\infty}$ we obtain the reverse inequality $\langle \alphab, F_{\ell-1} \rangle \geq \langle \alphab, F_\ell \rangle$ which forces $\langle \alphab, F_{\ell-1} \rangle=\langle \alphab, F_{\ell} \rangle$ for every $\ell$, and thus $\langle \alphab, F_\ell \rangle =0$.
Since $F_\ell \subset F_\infty$ is a subbundle of the same $\alphab$-slope and $F_\infty$ is $\alphab$-semistable, we conclude $F_\ell$ is also $\alphab$-semistable.
Again, as the category of $\alphab$-semistable vector bundles of fixed slope is abelian, we deduce that ${F_{\ell-1}}/{t(F_\ell)}$ and
\[\mathrm{gr}(F)_\ell \coloneqq \dfrac{F_{\ell}/t(F_{\ell+1})}{s(F_{\ell-1}/t(F_\ell))}\] are $\alphab$-semistable. By semistability of $\mathrm{gr}(F)_\ell$, this sheaf is torsion free, and thus a vector bundle, which completes the proof.
\end{proof}

\begin{remark}
    This does not prove that the stack $\Bun$ is S-complete (or $\Theta$-complete); indeed it is not in general, as the cokernel of an inclusion of locally free sheaves may not be locally free (see \cite[Proposition 6.8.31 and Remark 6.8.33]{alper-moduli}).
\end{remark}

\begin{corollary}\label{cor:langton}
    The good moduli space $\Mspacess$ is a normal
    and proper algebraic space of finite type over $\Spec(k)$, which is irreducible if it is non-empty.
\end{corollary}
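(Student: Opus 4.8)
The plan is to deduce each stated property of $\Mspacess$ from the corresponding property of the stack $\Mstackss$ already established, using the general formalism of good moduli spaces. First, the existence of the separated good moduli space $\Mspacess$ is the content of the preceding proposition, so we may take $\phi\colon \Mstackss \to \Mspacess$ as given. Recall from \cref{proposition: basic properties of Mstackss} that $\Mstackss$ is smooth (hence normal and reduced) and irreducible when non-empty, and from \cref{proposition: Mstackss of stacky curve is finite type} that it is of finite type over $k$. Since good moduli space morphisms are surjective and universally closed, and since $\Mstackss$ is of finite type, $\Mspacess$ is of finite type and irreducible (when non-empty), being the image of an irreducible stack. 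For normality, I would use that $\phi_*\Oc_{\Mstackss} = \Oc_{\Mspacess}$ together with the fact that $\Mstackss$ is normal: normality of the good moduli space follows because the structure sheaf of $\Mspacess$ is a subsheaf of a normal ring (more precisely, $\Oc_{\Mspacess}\to \phi_*\Oc_{\Mstackss}$ is an isomorphism and the target is a sheaf of normal domains on the smooth stack, so integral closedness descends); this is a standard property, e.g.\ as in \cite[\S 4]{alper:2013} or \cite[\S 6.3]{alper-moduli}.

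The substantive point is \emph{properness}. Since $\Mspacess$ is already known to be separated and of finite type over $k$, it remains to show universal closedness, and for this I would apply the valuative criterion via a Langton-type argument. Concretely: given a DVR $R$ with fraction field $K$ and a map $\Spec K \to \Mspacess$, after possibly an extension of $R$ this lifts to $\Spec K \to \Mstackss$, i.e.\ an $\alphab$-semistable vector bundle $F_K$ on $\Cc_K$. We must extend it (up to modification of the special fibre) to an $\alphab$-semistable bundle over $\Cc_R$, which then yields $\Spec R \to \Mspacess$. First spread $F_K$ out to \emph{some} coherent sheaf on $\Cc_R$, flat over $R$ (using that $\Coh$ is of finite type and has the appropriate completeness, or directly using properness of the Quot scheme from \cite{olsson-starr:2003}); since torsion-freeness is generic we may assume it is a vector bundle after removing torsion. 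If the special fibre is not $\alphab$-semistable, replace it by an elementary modification along the maximal destabilising subsheaf of the special fibre, exactly as in Langton's original argument, adapted to the $\alphab$-slope using \cref{theorem: Edegreeformula} and boundedness of $\alphab$-slopes of subsheaves (\cref{lemma: preliminaries for boundedness of semistability}\ref{item: slopes of subs are bounded}). The standard decreasing-invariant argument shows this process terminates, producing an $\alphab$-semistable extension over $\Cc_R$.

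I expect the termination step of the Langton argument to be the main technical obstacle, since one must check that the relevant numerical quantity (typically the length of the quotient by the saturation of the maximal destabilising subsheaf, or a suitable $\alphab$-adapted analogue) strictly decreases and is bounded below; the presence of stacky points and the twisted slope $\mu_\alphab$ means one has to be careful that the elementary modifications interact correctly with the multiplicities $m_{p,i}$, using \cref{theorem: Edegreeformula} to track how $\deg_E$ changes. Everything else is formal: separatedness and finite type were recorded already, normality and irreducibility pass to the good moduli space by the general theory of \cite{alper:2013}, and surjectivity of $\phi$ gives that $\Mspacess$ is non-empty exactly when $\Mstackss$ is.
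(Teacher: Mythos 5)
Your proposal is correct and follows essentially the same route as the paper: irreducibility, normality and finite type descend from the stack to the good moduli space via the general theory of Alper, and properness reduces (given separatedness from the existence theorem) to the existence part of the valuative criterion, verified by a Langton-type semistable-reduction argument. The only difference is that the paper disposes of the Langton step by citing the extension of Langton's theorem to stacky curves (Huang), whereas you sketch the elementary-modification argument yourself and correctly flag its termination as the one point requiring real work.
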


\begin{proof}
The stack $\Mstackss$ is irreducible and smooth (see \cref{proposition: basic properties of Mstackss}). 
	By \cite[Theorem 4.16]{alper:2013}, the irreducibility and normality of $\Mstackss$ descend to its good moduli space $\Mspacess$.
	We are left to prove properness which, in view of \cite[Theorem A]{AHLH:existence}, amounts to showing that the stack $\Mstackss$ satisfies the existence part of the valuative criterion of properness.
	For this, we can assume that $k$ is algebraically closed. For a non-stacky curve, this is a classical result of Langton \cite[Theorem at page 99]{Langton:1975} which was extended to the case of stacky curves in  \cite[Theorem 1.1]{huang:2023:langton}.
\end{proof}

The remainder of the paper is devoted to proving that the good moduli space is projective, and thus in particular is a scheme rather than just an algebraic space. Our first step is to construct the line bundle from which we will obtain a projective embedding.

% ---
\section{Determinantal line bundles}
\label{sec:det-line-bdles}
% ---

In this section we construct the determinantal line bundle $\Lc_V $ over $\Mstack$ which is naturally associated to a vector bundle $V$ on $\Cc$.
We will see that when $\langle [V] , \betab \rangle =0$, this line bundle has a global section. 
The properties of this line bundle will be crucial to proving the projectivity of $\Mspacess$ in \cref{sec:proj-of-gms}.

% -
\subsection{Definition and main properties of determinantal line bundles}
% -

Consider the diagram
\[  \begin{tikzcd}
	&
	\Uc_\betab  \ar[no head, dotted, d]
	\\
	V \ar[no head, dotted, d]
	&
	\Cc \times \Mstack \ar[rd, "p"] \ar[ld, "q"']
	&
	\\
	\Cc && \Mstack
\end{tikzcd}
\]
where $\Uc_\betab$ is the universal vector bundle on $\Cc \times \Mstack$ and $V$ is a vector bundle on $\Cc$. Then we define
\begin{equation}
	\label{eq: definition of determinantal line bundle}
	\Lc_V \coloneqq \det \left( \Rb p_*\localHom(q^*V, \Uc_\betab)  \right)^{\vee}
\end{equation}
and we call this bundle the \textbf{determinantal line bundle on $\Mstack$ associated to $V$}. Concretely, by base change \cite[Corollary 4.13]{hall-rydh:2017:perfect-complexes}, at a point $E \in \Mstack(k)$ the fibre is given by
\[
\Lc_V|_E = \det \Ext^0(V,E)^{\vee} \otimes \det \Ext^1(V,E).
\]
The complex $\Rb p_*\localHom(q^*V, \Uc_\betab)$ is locally represented by a complex of vector bundles $K^0 \rightarrow K^1$ on $\Mstack$. To see this, let $d \geq 0$ be an integer and consider the open substack $\Xc_d \subset \Mstack$ consisting of $F$ such that $\Ext^1(V(-d), F) = 0$. By base change, this means that the fibres of $\Rb^1 p_* \localHom (q^* V(-d) , \Uc_\beta )_{| \Xc_d}$ are zero, hence this sheaf vanishes. It is clear that these subtacks $\Xc_d$ cover $\Mstack$.
Now consider the short exact sequence of coherent sheaves on $\Cc \times \Xc_d$
\[
0 \rightarrow \localHom(q^*V, \Uc_\betab)|_{\Xc_d} \rightarrow \localHom(q^*V(-d), \Uc_\betab)|_{\Xc_d} \rightarrow Q_d \rightarrow 0.
\]
Applying $\Rb p_*$ to the short exact sequence we get a long exact sequence
\begin{align*}
0 &\rightarrow \Rb^0p_*\localHom(q^*V, \Uc_\betab)|_{\Xc_d} \rightarrow \Rb^0 p_* \localHom(q^*V(-d), \Uc_\betab)|_{\Xc_d} \rightarrow \\
 &\rightarrow \Rb^0 p_* Q_d \rightarrow \Rb^1 p_* \localHom(q^*V, \Uc_\betab)|_{\Xc_d} \rightarrow 0 \rightarrow \Rb^1 p_* Q_d \rightarrow 0.
\end{align*}

Notice that $Q_d$ is the tensor product of a vector bundle $\localHom(q^*V , \Uc_\betab)|_{\Xc_d}$ with $q^* \Oc_D(d)$, where $D$ is a divisor on $\Cc$ corresponding to the embedding $\Oc_\Cc \to \Oc_\Cc (d)$. Since $q^* \Oc_D(d)$ is flat over $\Xc_d$, it follows that $Q_d$ is too. By the cohomology and base change theorem \cite[Theorem~A]{cohomology-basechange} it follows that 
$\Rb^0 p_* \localHom(q^*V(-d), \Uc_\betab)|_{\Xc_d} = p_* \localHom(q^*V(-d), \Uc_\betab)|_{\Xc_d}$ and $\Rb^0 p_* Q_d = p_* Q_d$ and they are vector bundles. In particular, we have a quasi-isomorphism of complexes 
\[ \Rb p_*\localHom(q^*V, \Uc_\betab)|_{\Xc_d} \simeq \left[p_* \localHom(q^*V(-d), \Uc_\betab)|_{\Xc_d} \overset{\delta_d}{\longrightarrow} p_* Q_d \right] \]
where the latter is a two term complex of vector bundles.

Let $V$ be a vector bundle such that $\langle V, \betab\rangle = 0$, then from the local picture we can see that $\Lc_V$ comes with a natural section. Namely, we take $\det(\delta_d) \in H^0(\Xc_d, \Lc_V|_{\Xc_d})$, and these sections glue together to a global section $\sigma_V$. Note that on the locus $\Xc_0$ the complex is given by the unique map $\delta_0 \colon 0_{\Xc_0} \rightarrow 0_{\Xc_0}$ between the zero vector bundles, so $\Lc_V$ trivialises on $\Xc_0$ via the canonical section $\det(\delta_0) = 1$. Hence for $E \in \Mstack(k)$, we have
\begin{equation} \label{prop:sigma} \sigma_V|_E \neq 0 \, \text{ if and only if } \,  \Hom(V,E) =\Ext^1(V, E) = 0. \end{equation}

Given an exact sequence of vector bundles $0 \rightarrow V' \rightarrow V \rightarrow V'' \rightarrow 0$, we have by construction $\Lc_V \cong \Lc_{V'} \otimes \Lc_{V''}$. It follows that $\Lc_{V}$ only depends on the class $[V]$ of $V$ in the Grothendieck ring  $K_0(\Cc)$. However, the section $\sigma_V$ \emph{does} depend on $V$ and we will leverage this fact to construct many different sections of $\Lc_{[V]}$ using vector bundles $W$ with the same class as $[V]$. If $\alphaw := [V]$, then we will write $\Lc_{\alphaw}$ instead of $\Lc_{[V]}$.

Lastly, we point out that the determinantal line bundle may vary as the algebraic invariant $[V]$ varies in the same numerical invariant class, and provide a formula for the dependence.
This formula is a generalisation of \cite[Fact on p. 6]{marian-oprea:2007} (see also \cite{drezet-narasimhan:1989}), as well as a reformulation in a choice-independent way.
For this, we use the following notation: given an integer combination $Z = \sum_i x_i - \sum_j y_j$ of points on $\Cc$ and a vector bundle $\Uc$ on $\Cc \times \Mstackss$, we define the following complex of sheaves on $\Mstackss$:
\[\Uc_Z = \bigoplus_i \Uc|_{\{x_i\} \times \Mstackss} [0] \oplus \bigoplus_j \Uc|_{\{y_j\} \times \Mstackss} [1].
\]

\begin{proposition}\label{proposition:determinantal line bundles dependence on V}
	Let $V$ and $V'$ be two vector bundles on $\Cc$ with the same numerical invariants $\alphab$.
	Denote by $D$ and $D'$ the divisors of $\det \pi_* V$ and $\det \pi_* V'$, respectively,
	and view them as combinations of points of $\Cc$ via the identification $\pi \colon |\Cc| \to |C|$.
	Then we have the following isomorphism over $\Mstackss$:
	\[ \Lc_V = \Lc_{V'} \otimes \det (\Uc_{\betab, D-D'})
	. \]
\end{proposition}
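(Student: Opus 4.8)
The plan is to reduce to the case where $V$ and $V'$ differ by a simple modification at a single point, and then compute directly. First I would use the structure of the Grothendieck group: since $[V]$ and $[V']$ have the same numerical invariant $\alphab$, the same rank and the same multiplicities at every stacky point, so by the description of $\Knot(\Cc)$ in terms of $(\rk, \det\pi_*, (m_{p,i}))$ the classes $[V]$ and $[V']$ differ exactly by an element of $\pic{C}$, namely by $[\pi^*(\det\pi_*V)] - [\pi^*(\det\pi_*V')]$. Writing $\det\pi_* V \otimes (\det\pi_* V')^\vee = \Oc_C(D - D')$ as a difference of (non-stacky) points, it suffices to treat the case $[V] = [V'] + [\Oc_\Cc] - [\Oc_\Cc(-x)]$ for a single non-stacky point $x$, since $\Lc_{(-)}$ and the right-hand side are both additive in short exact sequences / in the class, and the general case follows by iterating and taking inverses.

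The key step is then the following: from the short exact sequence $0 \to \Oc_\Cc(-x) \to \Oc_\Cc \to \Oc_x \to 0$ on $\Cc$, tensoring by $V'$ (or by a bundle with class close to $[V]$) and applying $\localHom(-, \Uc_\betab)$ followed by $\Rb p_*$, I would obtain a distinguished triangle relating $\Rb p_* \localHom(q^*V, \Uc_\betab)$, $\Rb p_* \localHom(q^*V', \Uc_\betab)$, and $\Rb p_*$ of the skyscraper contribution. The skyscraper term computes to $\Uc_\betab|_{\{x\}\times\Mstackss}$ up to a shift (using that $\pi$ is a good moduli space and $x$ is non-stacky, so $\Rb p_* \localHom(q^*\Oc_x, \Uc_\betab) = \Uc_\betab|_{\{x\}\times\Mstackss}[0]$ after identifying $\Oc_x$ with its structure sheaf). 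Taking determinants of the triangle and using that $\det$ is multiplicative on distinguished triangles and turns shifts into inverses, I get $\Lc_V = \Lc_{V'} \otimes \det(\Uc_\betab|_{\{x\}\times\Mstackss})^{\pm 1}$ with the sign matching the orientation of $x$ in $D - D'$; this is exactly $\det(\Uc_{\betab, D - D'})$ in the single-point case with the notation set up before the statement.

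I would then assemble: by induction on the number of points in $D - D'$ and by the additivity isomorphism $\Lc_{(-)}$ and the definition of $\Uc_{\betab, Z}$ (which is itself additive in $Z$, sending $x$ to $\Uc|_{\{x\}}[0]$ and $-y$ to $\Uc|_{\{y\}}[1]$, hence contributing $\det(\Uc|_{\{x\}})$ and $\det(\Uc|_{\{y\}})^\vee$ respectively), the single-point computations glue to the claimed isomorphism $\Lc_V = \Lc_{V'} \otimes \det(\Uc_{\betab, D - D'})$.

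The main obstacle I anticipate is book-keeping the signs and shifts: one must be careful that a point appearing in $D$ with positive multiplicity contributes $\det(\Uc|_{\{x\}})$ to the \emph{correct} side (so that the dual in the definition $\Lc_V = \det(\Rb p_* \cdots)^\vee$ is accounted for), and that the distinguished triangle is oriented so $\Oc_x$ (versus $\Oc_x[1]$) lands where the definition of $\Uc_{\betab, Z}$ predicts. A secondary technical point is justifying the base-change identification $\Rb p_* \localHom(q^*\Oc_x, \Uc_\betab) \simeq \Uc_\betab|_{\{x\}\times\Mstackss}$: this uses that $x$ is non-stacky together with cohomology and base change \cite[Theorem A]{cohomology-basechange}, exactly as in the local trivialisation argument preceding the statement, so it should go through without difficulty.
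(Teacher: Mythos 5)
Your proposal is correct in substance and arrives at the right formula, but the key computation is done by a genuinely different route from the paper's. Both proofs share the same first reduction: $V \mapsto \Lc_V$ factors through $\Knot(\Cc)$, and $[V]-[V'] = [\det\pi_*V]-[\det\pi_*V']$ is an integer combination of skyscraper classes $[\Oc_x]$, so everything comes down to identifying $\Lc_{\Oc_x}$ with $\det \Uc_{\betab,x}$. The paper does this in \cref{lemma: determinantal line bundle from a skyscraper} by a Grothendieck--Serre duality computation, for which it develops $p^\times$, the projection formula and tor-independent base change for concentrated morphisms of stacks in \cref{appendix: GD}. Your route instead applies $\localRHom(-,\Uc_\betab)$ and $\Rb p_*$ to the Koszul sequence $0\to\Oc_\Cc(-x)\to\Oc_\Cc\to\Oc_x\to 0$ and uses multiplicativity of $\det$ on the resulting triangle; this is more elementary and avoids the appendix machinery entirely, at the cost of not producing the general duality statement the paper records for its own sake.

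Two points need tightening. First, ``tensoring by $V'$'' is a red herring: $[V']-[V'(-x)] = [V'|_x]$ contributes $\rank(\betab)$ copies of the point $x$, not one, so $V$ cannot be obtained from $V'$ this way; the correct reduction is purely $K_0$-linear, namely $\Lc_V\otimes\Lc_{V'}^{-1} = \Lc_{[V]-[V']} = \bigotimes_i \Lc_{\Oc_{x_i}}^{\pm 1}$, which you also state and which suffices. Second, the skyscraper term must be computed with the \emph{derived} $\localRHom$: the underived $\localHom(q^*\Oc_x,\Uc_\betab)$ is zero because $\Uc_\betab$ is locally free. The triangle gives $\localRHom(q^*\Oc_x,\Uc_\betab)\simeq \Uc_\betab(x)|_{\{x\}\times\Mstackss}[-1]$, so $\Rb p_*$ of it sits in degree $1$, its determinant is $\det(\Uc_\betab(x)|_{\{x\}\times\Mstackss})^{-1}\cong\det(\Uc_{\betab,x})^{-1}$ (the twist by the fixed one-dimensional space $\Oc(x)|_x$ contributes only a trivial line bundle on $\Mstackss$), and the outer dual in the definition of $\Lc$ restores the sign, giving $\Lc_{\Oc_x}\cong\det\Uc_{\betab,x}$ as required. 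With these shifts tracked, your induction over the points of $D-D'$ goes through and matches the sign convention built into the definition of $\Uc_{\betab,Z}$.
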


The proof relies on the formula $\Lc_{\Oc_x} = \det \Uc_{\betab, x}$, which is obtained by a direct calculation using the formulas from \cref{appendix: GD}. 
Since this result will not be used in this paper, we postpone the full details of the proof to  \cref{appendix: proof of det line bundle formula}.

\section{Vanishing results}
\label{sec:vanishing}

We consider $\alphab ,\betab \in {\Knum}(\Cc)$ such that $\langle \alphab , \betab \rangle=0$. As seen in the previous section, to specify the determinantal line bundle we also need to fix a determinant. Thus we will fix an algebraic invariant $\alphaw = (\alphab,L)$ and work with the line bundle $\Lc_{\alphaw}$ on $\Mstack$ and produce sections $\sigma_V$ of $\Lc_{\alphaw}^{\otimes m}$ using vector bundles $V \in \Bun_{m\alphaw}$ with numerical invariant $m \alphab$ and fixed determinant $L^{\otimes m}$ as in the previous section. First, we show for $m \gg 0$ (and in fact we give an effective bound) and for any $\alphab$-semistable vector bundle $E$ with invariant $\betab$, we can find a vector bundle $V  \in \Bun_{m\alphaw}(k)$ such that $\Hom(V,E) = 0$, or equivalently $\sigma_V(E) \neq 0$, which will allow us to prove that the restriction of $\Lc_{\alphaw}$ to $\Mstackss$ is semiample in \cref{theorem:semiample} below. Throughout this section, we assume $k=\bar{k}$ to have the existence of $k$-points of $\Bun_{m\alphaw}$.

\subsection{Hom-vanishing}
\label{sec:hom-vanishing}

We will describe the codimension of loci where Hom-vanishing fails by using \textbf{stacks of short exact sequences} which we now introduce.
For any numerical invariants $\betab_1, \betab_2$, there is a stack $\Extstack_{\betab_2,\betab_1}$ whose objects over $S$ are
\[
	\Extstack_{\betab_2, \betab_1}(S)=  \left\langle
	0 \rightarrow E_1 \rightarrow F \rightarrow E_2 \rightarrow 0
	\; \middle|
	\begin{array}{l}
		\text{short exact sequence,} \\
		E_i \in \Mstack[\betab_i](S) 
	\end{array}
	\right\rangle ,
\]
and whose morphisms are isomorphisms of short exact sequences, i.e. triples $\psi_1 \colon E_1 \to E'_1$, $\phi \colon F \to F'$, $\psi_2 \colon E_2 \to E'_2$ that make the two squares commute.

This stack admits natural forgetful maps
\[
	\xymatrix@C=1em{  & \Extstack_{\betab_2,\betab_1} \ar[ld]_{\pi_{13}} \ar[rd]^{\pi_2} & & & &  E_1 \hookrightarrow F \twoheadrightarrow E_2 \ar@{|->}[ld] \ar@{|->}[rd] &  \\ \Mstack[\betab_2] \times \Mstack[\betab_1] & & \Mstack[\betab_2 +\betab_1] & & (E_2,E_1)  & & F.}
\]
The morphism $\pi_{13}$ is a vector bundle stack and thus smooth (\cite[Theorem 2.0.4]{Lisanne}): the fibre over $(E_2, E_1)$ is isomorphic to $[\Ext^1(E_2, E_1)/\Ext^0(E_2,E_1)]$ and so the relative dimension of $\pi_{13}$ is equal to $-\langle \betab_2, \betab_1 \rangle$.
Hence $\Extstack_{\betab_2,\betab_1}$ is smooth of dimension  $-\langle \betab_1, \betab_1 \rangle -\langle \betab_2, \betab_2 \rangle -\langle \betab_2, \betab_1 \rangle$.
The morphism $\pi_2$ is representable which can be seen in two different ways: the fibres are Quot schemes, or the corresponding functor is faithful, as a morphism of short exact sequences which is the identity on $F$ must also be the identity on $E$ and $G$.

The following proposition can be seen as an extension of \cite[Lemma 3.5.8]{alper-etal:2022}; however we actually simplify the proof by doing a dimension count on the stack of vector bundles (with fixed invariants and determinant) and using the Euler pairing to simplify computations.

\begin{proposition}
\label{proposition:Hom-vanishing}
Let $\alphaw \in \Knot (\Cc)$ be a positive algebraic invariant, and let $\betab$ be a positive numerical invariant such that $\langle \alphaw, \betab\rangle = 0$. 
Let $\etaw \in \Knot(\Cc)$ be an effective algebraic invariant. Then there exists a constant $\kappa = \kappa_{\alphaw, \betab, \etaw}$ such that for any $m > \kappa$ and any $E \in \Mstack[\betab](k)$, a general vector bundle $V \in \Mstack[m \alphaw + \etaw] (k)$ satisfies the following conditions.
\begin{enumerate}[label=(\roman*)]
\item \label{it:hom-van-1}Any non-zero morphism $f \colon V \rightarrow E$ satisfies $\langle \alphaw, \im(f)\rangle \geq 0$.
\item \label{it:hom-van-2} If we assume $E$ is $\alphaw$-stable, then every non-zero map $f \colon V \rightarrow E$ is surjective.
\item \label{it:hom-van-3} If $E$ is $\alphaw$-stable and $\langle \etaw, \betab\rangle \leq 0$, then $\Hom(V, E) = 0$.
\end{enumerate}
\end{proposition}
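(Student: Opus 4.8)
The plan is to prove the three statements in order, using a dimension count on the stacks of short exact sequences introduced just before the proposition, together with the Euler pairing.

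\textbf{Setup and the key dimension estimate.} The idea is that a non-zero map $f\colon V\to E$ factors as $V\twoheadrightarrow \im(f)\hookrightarrow E$, so the ``bad'' maps are controlled by the space of possible images. Fix a numerical invariant $\gammab$ which is a potential value of $[\im(f)]$: it must satisfy $0 < \rank\gammab \le \rank\betab$ and the multiplicities of $\gammab$ are pinched between $0$ and those of $\betab$, so there are only finitely many such $\gammab$. For each such $\gammab$, consider the stack $\Extstack_{\betab - \gammab, \gammab}$ of short exact sequences $0\to \im \hookrightarrow E \twoheadrightarrow E/\im \to 0$ with the middle term having invariant $\betab$; its dimension was computed above to be $-\langle\gammab,\gammab\rangle - \langle\betab-\gammab,\betab-\gammab\rangle - \langle\betab-\gammab,\gammab\rangle$. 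Since we are fixing $E$, the relevant locus is the fibre of $\pi_2$ over $E$, i.e.\ the Quot-type space parametrising quotients $E\twoheadrightarrow E/\im$; its dimension is at most $\dim\Extstack_{\betab-\gammab,\gammab} - \dim\Mstack[\betab]$ (roughly $-\langle\betab-\gammab,\gammab\rangle$ after cancellation, via additivity of $\langle\,,\,\rangle$). On the other side, the space of surjections $V\twoheadrightarrow \im$ with $[\im]=\gammab$ has dimension at most $\dim\Mstack[\gammab] + \hom(V,\im) = \dim\Mstack[\gammab] + \langle m\alphaw+\etaw,\gammab\rangle + \ext^1(V,\im)$, and for $m$ large and $\im$ varying in a bounded family the $\Ext^1$ term vanishes (generic semistability arguments, or just boundedness plus upper-semicontinuity). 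Then I count: the locus of $V\in\Mstack[m\alphaw+\etaw]$ admitting a non-zero map to $E$ with image of class $\gammab$ has dimension at most (dimension of the incidence variety of pairs $(V,f)$) minus $\hom$-fibre adjustments, and I want this to be strictly less than $\dim\Mstack[m\alphaw+\etaw]$. The crucial point is that $\dim\Mstack[m\alphaw+\etaw] = -\langle m\alphaw+\etaw, m\alphaw+\etaw\rangle$ grows like $m^2$, while the ``extra'' contributions from $\Hom(V,\im)$ grow only like $m$ (they are $\langle m\alphaw+\etaw,\gammab\rangle = m\langle\alphaw,\gammab\rangle + \langle\etaw,\gammab\rangle$, linear in $m$). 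So for $m$ past an explicit threshold $\kappa$ depending only on $\alphaw,\betab,\etaw$, the bad locus is a proper closed substack, proving that a general $V$ avoids it — \emph{unless} $\langle\alphaw,\gammab\rangle$ can be negative, in which case $\hom(V,\im)$ could be small or zero and there is nothing to fix. This dichotomy is exactly what gives \ref{it:hom-van-1}: either $\langle\alphaw,\gammab\rangle\ge 0$ and we are done for that $\gammab$ automatically, or $\langle\alphaw,\gammab\rangle<0$ and then $\hom(V,\im(f))$ is forced to be $0$ for general $V$ (the incidence variety has too small dimension), so no such $f$ exists. Running over the finitely many $\gammab$ gives the uniform $\kappa$.

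\textbf{Deducing \ref{it:hom-van-2} and \ref{it:hom-van-3}.} For \ref{it:hom-van-2}, assume $E$ is $\alphaw$-stable and $f\colon V\to E$ is non-zero with image $\im(f)\subsetneq E$ a proper subsheaf. Since $\langle\alphaw,\betab\rangle=0$, stability of $E$ (via \cref{remark: slope from pairing}) forces $\langle\alphaw,\im(f)\rangle < 0$ strictly for every proper nonzero subsheaf. But \ref{it:hom-van-1} says a general $V$ has $\langle\alphaw,\im(f)\rangle\ge 0$ for all its maps — contradiction. Hence $\im(f)=E$, i.e.\ $f$ is surjective. For \ref{it:hom-van-3}, suppose in addition $\langle\etaw,\betab\rangle\le 0$ and that $\Hom(V,E)\ne 0$; by \ref{it:hom-van-2} there is a surjection $f\colon V\twoheadrightarrow E$. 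Then $\langle m\alphaw+\etaw, \betab\rangle = \langle m\alphaw+\etaw, E\rangle \ge \littleext{0}{V}{E} - \littleext{1}{V}{E}$, and more usefully: the surjection gives $[V] = [E] + [\ker f]$ in $\Knum(\Cc)$ with $\ker f$ effective, so $\langle\alphaw,[\ker f]\rangle$ is controlled; compute $\langle\alphaw, m\alphaw+\etaw\rangle = \langle\alphaw,\betab\rangle + \langle\alphaw,[\ker f]\rangle = \langle\alphaw,[\ker f]\rangle$ using $\langle\alphaw,\betab\rangle=0$, while the left side equals $m\langle\alphaw,\alphaw\rangle + \langle\alphaw,\etaw\rangle$. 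By \cref{proposition:bound for Euler pairing} (applied to the positive invariant $\alphab$), $\langle\alphaw,\alphaw\rangle < 0$, so the left side is very negative for $m$ large, forcing $\langle\alphaw,[\ker f]\rangle$ very negative; but $\ker f$ is a subsheaf of the $\alphaw$-semistable $V$ (after possibly enlarging $\kappa$ so that $V$ can be taken $\alphaw$-semistable — or arguing directly with the general $V$), which bounds $\langle\alphaw,[\ker f]\rangle$ from below, a contradiction once $m>\kappa$. An alternative and cleaner route: combine surjectivity from \ref{it:hom-van-2} with the numerical constraint $\langle\etaw,\betab\rangle\le 0$ to show $\langle m\alphaw+\etaw,\betab\rangle \le \langle m\alphaw,\betab\rangle = 0$, whence $\hom(V,E) \le \ext^1(V,E) + \langle m\alphaw+\etaw,\betab\rangle \le \ext^1(V,E)$, and then the dimension count already used for \ref{it:hom-van-1} shows a general such $V$ in fact has $\Hom(V,E)=0$ because a surjection would force the incidence locus to have too-large dimension.

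\textbf{Main obstacle.} The technical heart is the dimension count in \ref{it:hom-van-1}: carefully bounding the dimension of the locus in $\Mstack[m\alphaw+\etaw]$ of bundles $V$ admitting a non-zero map to the \emph{fixed} bundle $E$ with prescribed image class $\gammab$, and checking that the quadratic-in-$m$ growth of $\dim\Mstack[m\alphaw+\etaw]$ strictly beats the linear-in-$m$ contributions from the $\Hom$-spaces, \emph{uniformly in $E$}. This requires (a) a boundedness statement so that $\ext^1(V,\im)$ is uniformly controlled or vanishes for the relevant $\im$, which follows from finiteness of the $\gammab$'s and properness-type arguments as in \cref{lemma: preliminaries for boundedness of semistability}; (b) keeping track that the Euler pairing is additive and can be computed purely numerically (\cref{theorem: stacky RR}), so all the $\langle\,,\,\rangle$ manipulations descend to $\Knum(\Cc)$; and (c) extracting an \emph{effective} $\kappa$ from the inequality $-\langle m\alphaw+\etaw,m\alphaw+\etaw\rangle > (\text{linear in }m\text{ terms})$, which is where \cref{proposition:bound for Euler pairing} supplies $-\langle\alphaw,\alphaw\rangle \ge (g_C-1)(\rank\alphab)^2 > 0$ as the leading coefficient. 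Modelling the argument on \cite[Lemma 3.5.8]{alper-etal:2022} but replacing cohomology with $\Ext$-groups and a direct dimension count on $\Bun_{m\alphaw}$ should streamline it, as the authors indicate.
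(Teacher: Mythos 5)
Your overall architecture (stratify the bad locus by the class $\gammab$ of $\im(f)$, bound the strata by a dimension count on stacks of short exact sequences, deduce (ii) from (i) via stability) matches the paper's, but three steps as written do not hold up. First, finiteness of the possible $\gammab$: bounding the rank and multiplicities by those of $\betab$ leaves the degree of $\im(f)$ unbounded \emph{below}, so your list of strata is a priori infinite and the thresholds need not be uniform. The paper closes this by first arranging that a general $V$ has $\pi_*V$ $\ell$-regular for an $\ell$ independent of $m$; then $\pi_*\im(f)$, being a quotient, is also $\ell$-regular, giving $\deg\pi_*\im(f)\ge -\ell\rank\im(f)$, while the condition $\langle\alphab,\gammab\rangle<0$ supplies the upper bound. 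Second, your dimension count rests on two false assertions: that $\Ext^1(V,\im(f))$ vanishes for $m$ large (by Riemann--Roch it grows linearly in $m$ exactly when $\langle\alphab,\gammab\rangle<0$, since $\hom-\operatorname{ext}^1=m\langle\alphab,\gammab\rangle+\langle\etab,\gammab\rangle\to-\infty$ while $\hom\ge 0$), and that the codimension of the bad locus grows quadratically in $m$ (``quadratic beats linear''). In fact the stratum $B_\gammab$ is swept out by kernels $K$ of class $m\alphab+\etab-\gammab$ together with extension data, so its dimension is also quadratic in $m$ with the same leading term $-m^2\langle\alphab,\alphab\rangle$; the correct count gives $\operatorname{codim}B_\gammab\ge\langle\gammab,\gammab\rangle-m\langle\alphab,\gammab\rangle-\langle\etab,\gammab\rangle$, which is only \emph{linear} in $m$ and is positive for large $m$ precisely because $\langle\alphab,\gammab\rangle<0$. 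You do identify the right dichotomy on the sign of $\langle\alphab,\gammab\rangle$, but the mechanism you give for why the count closes is not the one that works.

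Third, neither of your routes to (iii) is valid. In the first, the claim that semistability of $V$ bounds $\langle\alphab,[\ker f]\rangle$ from below is backwards: semistability gives $\mu_{\alphab}(\ker f)\le\mu_{\alphab}(V)$, an \emph{upper} bound, and indeed $\langle\alphab,[\ker f]\rangle=\langle\alphab,[V]\rangle$ is forced to be very negative with no contradiction --- a surjection $V\twoheadrightarrow E$ cannot be excluded by Euler-characteristic bookkeeping alone (consider $V=E\oplus K$). The second route is circular: the count from (i) yields nothing when $\gammab=\betab$, because the linear term $-m\langle\alphab,\betab\rangle$ vanishes. The paper's proof of (iii) is a genuinely different count, on the substack of extensions $0\to K\to V\to E\to 0$ whose quotient is \emph{abstractly isomorphic to the fixed} $E$; fixing $E$ gains $\dim\Aut(E)\ge 1$, and the resulting codimension is $-\langle\etab,\betab\rangle+1$, positive exactly under the hypothesis $\langle\etab,\betab\rangle\le 0$. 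These are genuine gaps rather than expository ones: without the $\ell$-regularity step the stratification is infinite, and without the corrected counts neither the conclusion nor the effective threshold $\kappa$ can be extracted.
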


\begin{proof}
We first show that the general vector bundle $V \in \Mstack[m \alphaw + \etaw](k)$ one has that $\pi_*V$ is $\ell$-regular for some $\ell$ independent of $m$.
Clearly this condition is open, so we just have to show there exists such a bundle.
Let $E_1 \in \Mstack[\alphaw](k)$ be such that $\pi_*E_1$ is $\ell_1$-regular and $E_2 \in \Mstack[\alphaw + \etaw](k)$ such that $\pi_*E_2$ is $\ell_2$-regular, for some $\ell_1$ and $\ell_2$.
Then $V = E_1^{\oplus m - 1} \oplus E_2 \in \Mstack[m\alphaw + \etaw](k)$ is such that $\pi_*V$ is $\ell$-regular for $\ell:=\max(\ell_1, \ell_2)$, which is independent of $m$.

For \ref{it:hom-van-1}, we will show that the locus inside $\Mstack[m \alphaw + \etaw]$ where the desired Hom-vanishing condition fails has positive codimension.
We will stratify this locus by the possible algebraic invariants of the image of the non-zero maps $f \colon V \to E$ such that $\langle \alphaw, \im(f)\rangle < 0$ and $\pi_* V$ is $\ell$-regular.
Let $\gammab$ be the numerical invariant of $G \coloneqq \im(f)$, 
and recall that $\alphab, \etab$ are the numerical invariants of $\alphaw, \etaw$, respectively.
We claim that there are only finitely many values of $\gammab$ that can appear.
Since $G \subset E$, we have $1 \leq \rank(\gammab) \leq \rank(\betab)$ and the multiplicities of $G$ are bounded by those of $E$, so it remains to bound the degree of $G$.
In fact, we will bound the $\alphab$-degree of $G$ and see that our bounds are independent of $m$.
Since $\pi_*V$ is $\ell$-regular, $\pi_* G$ is $\ell$-regular as well, and we have $\deg \pi_* G(\ell) \geq 0$, so $\deg(\pi_* G) \geq -\ell \rank G$.
On the other hand, by our assumption on $f$, we have $\deg_\alphab \gammab + \rank(\gammab) \cdot \langle \alphab , \Oc \rangle = \langle \alphab, \gammab\rangle < 0$, so combining this with \cref{theorem: Edegreeformula} as well as the inequality we already obtained from $\ell$-regularity, we get
\begin{equation}\label{bound on deg gamma}
-  \ell \rank (\gammab) \leq \deg \pi_*\gammab \leq \deg_\alphab \gammab < -\rank (\gammab) \cdot \langle \alphab, \Oc\rangle.
\end{equation}
Hence there are finitely many possibilities for $\gammab$.
For each of these finitely many $\gammab$ with $\langle \alphab,\gammab \rangle < 0$, we let $B_\gammab$ be the locus of $V \in \Mstack[m \alphaw + \etaw]$ where there is a non-zero morphism $f \colon V \rightarrow E$ whose image $G$ has invariant $\gammab$.

Now consider the diagram of vector bundles, whose invariants are displayed nearby in blue.
\begin{equation*}
\begin{tikzcd}[column sep={20mm,between origins}]
 & |[font=\small]| \textcolor{blue!50!cyan!60!black}{m\alphab +\etab- \gammab} & |[font=\small]|  \textcolor{blue!50!cyan!60!black}{m\alphab +\etab} &  |[font=\small]|  \textcolor{blue!50!cyan!60!black}{\gammab} & \\ [-7mm]
    0 \ar[r] & K \ar[r] & V \ar[r] \ar[rd, "f"'] & G \ar[r] \ar[d, hook]& 0 \\
    &&& E\\ [-7mm]     &&& |[font=\small]|  \textcolor{blue!50!cyan!60!black}{\betab}
\end{tikzcd}
\end{equation*}
Let $\Ec_\gammab$ be the substack of  $\Extstack_{\gammab , m\alphab +\etab- \gammab}$, given by short exact sequences where the determinant of the pushforward of the middle term is $\det \pi_* (m\alphaw + \etaw)$.
Notice that $\Ec_\gammab$ is the pullback of $\Extstack_{\gammab , m\alphab +\etab- \gammab} \to \Pic(C)$, sending $(E_1 \to F \to E_2) \mapsto \det \pi_* F$,
along $\det \pi_* (m \alphaw + \etaw ) : B\Gm \hookrightarrow \Pic (C)$;
hence $\dim \Ec_\gammab = \dim \Extstack_{\gammab , m\alphab +\etab- \gammab} - g_C$.
Since the middle projection $\mathcal{E}_\gammab \rightarrow  \Mstack[m\alphaw + \etaw]$ is representable and its image contains the locus $B_\gammab$, it follows that 
{\small
\begin{align*}
\operatorname{codim} B_{\gammab} &\geq \dim \Mstack[m\alphaw + \etaw] - \dim \Ec_\gammab\\
&= -\langle m\alphab + \etab, m\alphab + \etab\rangle - g_C - \left(\dim \Mstack[m\alphab + \etab - \gammab] + \dim \Mstack[\gammab] - \langle \gammab, m\alphab + \etab - \gammab \rangle - g_C\right)\\
&= -\langle m\alphab + \etab, m\alphab + \etab\rangle + \langle m\alphab + \etab - \gammab, m\alphab + \etab - \gammab\rangle + \langle \gammab, \gammab\rangle + \langle \gammab, m\alphab + \etab - \gammab \rangle \\
&= \langle \gammab, \gammab \rangle - m\langle \alphab, \gammab\rangle - \langle \etab, \gammab\rangle. \addtocounter{equation}{1}\tag{\theequation} \label{codimension}
\end{align*}
}%
Since $\langle\alphab, \gammab\rangle < 0$ by assumption, this codimension is positive for sufficiently large $m$, namely for 
$m > \frac{\langle \gammab - \etab, \gammab \rangle}{\langle \alphab , \gammab \rangle}$.

For statement \ref{it:hom-van-2}, if $\im(f)$ is a proper subbundle, then by $\alphab$-stability of $E$ we conclude $\langle \alphab, \im(f)\rangle < 0$ as in \cref{remark: slope from pairing},
which contradicts \ref{it:hom-van-1}. Hence $\im(f)$ is either $0$ or $E$.

Finally to prove statement \ref{it:hom-van-3}, let $f \colon V \rightarrow E$ be a non-zero map.
By \ref{it:hom-van-2} we may assume that $f$ is surjective, so we get an exact sequence
\begin{equation*} \label{eq:diagram-F} \begin{tikzcd}[column sep={20mm,between origins}]
 & |[font=\small]| \textcolor{blue!50!cyan!60!black}{m\alphab +\etab - \betab} & |[font=\small]|  \textcolor{blue!50!cyan!60!black}{m\alphab +\etab} &  |[font=\small]|  \textcolor{blue!50!cyan!60!black}{\betab} & \\ [-7mm]
    0 \ar[r] & K \ar[r] & V \ar[r, "a"] & E \ar[r] & 0,
\end{tikzcd}
\end{equation*}
where now the right side is our fixed bundle $E$.
Let $\Ec$ be the substack of $\Extstack_{\betab, m\alphab +\etab - \betab}$ where the final term is abstractly isomorphic to $E$ and the determinant of the pushfoward of the middle term is $\det \pi_*(m\alphaw + \etaw)$.
Then $\Ec$ has dimension $\dim \Mstack[m\alphab + \etab - \betab] - \langle \betab, m\alphab + \etab - \betab \rangle - \dim \Aut(E) - g$, and the morphism $\mathcal{E} \rightarrow \Mstack[m\alphaw + \etaw]$ is representable and its image contains the locus $B \subset \Mstack[m\alphaw + \etaw]$ consisting of $V$ such that $\Hom(V,E) \neq 0$.
Hence, using that $\dim \Aut(E) \geq 1$,
{\small
\begin{align*}
\operatorname{codim} B &\geq \dim \Mstack[m\alphaw + \etaw] - \dim \Ec \\
&\geq -\langle m\alphab + \etab, m\alphab + \etab\rangle - g_C + \langle m\alphab + \etab - \betab, m\alphab + \etab - \betab\rangle + \langle \betab, m\alphab + \etab - \betab \rangle + 1 + g_C\\
&= -\langle m\alphab + \etab, \betab\rangle + 1
= - \langle \etab, \betab\rangle + 1,
\end{align*}
}%
which is positive precisely when $\langle \etab, \betab\rangle \leq 0$.
\end{proof}

To prove semiampleness, we only need to apply the above proposition to the case $\etaw = 0$.
However in \cref{sec:ext-vanishing}, in order to be able to separate points using the sections $\sigma_V$, we will use this proposition when $\etaw = \pm \deltaw$, where $\deltaw$ is the algebraic invariant of a degree $1$ torsion sheaf supported at a non-stacky point

\begin{proposition}\label{prop:eff bdd semiample}
Consider the situation of  \cref{proposition:Hom-vanishing}, and assume in addition that $\langle \etab, \gammab \rangle \leq 0$ for every positive numerical invariant $\gammab$.
Then the constant $\kappa$ from \cref{proposition:Hom-vanishing} can be chosen to be 
\[
\kappa_\betab \coloneq \max((g_\Cc - 1)(\rank \betab)^2, 0).
\]
Note that the condition on $\etab$ is satisfied for $\etab = 0$ and $\etab = \deltab$ is the numerical class of a skyscraper sheaf at a non-stacky point.
When $\etab = -\deltab$, we can choose the bound
\[
\kappa_\betab^+ \coloneq \max\left(\left(g_\Cc - 1 + \frac{1}{\rank{\betab}}\right)(\rank \betab)^2, 0\right).
\]

\end{proposition}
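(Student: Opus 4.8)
The plan is to trace through the proof of \cref{proposition:Hom-vanishing} and extract, in each of the relevant cases, the sharpest value of $m$ for which all the codimension estimates become strictly positive, then take the maximum (and round up to $0$ when the bound is vacuous). The key observation is that the constant $\kappa$ in \cref{proposition:Hom-vanishing} arose from two places: the bound $m > \frac{\langle \gammab - \etab, \gammab\rangle}{\langle \alphab, \gammab\rangle}$ needed in part~\ref{it:hom-van-1} to make \eqref{codimension} positive, and the condition $\langle \etab, \betab\rangle \leq 0$ in part~\ref{it:hom-van-3}, which imposes no lower bound on $m$ at all. So the entire content is to optimise the first bound over the finitely many admissible $\gammab$.

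First I would rewrite the inequality: since $\langle \alphab, \gammab\rangle < 0$, the condition $m > \frac{\langle \gammab - \etab, \gammab\rangle}{\langle \alphab, \gammab\rangle}$ is equivalent to $m\langle \alphab, \gammab\rangle + \langle \etab,\gammab\rangle < \langle \gammab,\gammab\rangle$, i.e. $\langle \gammab,\gammab\rangle - \langle m\alphab + \etab, \gammab\rangle > 0$, which is exactly the right-hand side of \eqref{codimension}. Now $\gammab$ is a positive numerical invariant with $1 \le \rank\gammab \le \rank\betab$ (it is the class of a subbundle $G \subseteq E$), so by \cref{proposition:bound for Euler pairing} we have $-\langle\gammab,\gammab\rangle \le (g_\Cc - 1)(\rank\gammab)^2 \le (g_\Cc-1)(\rank\betab)^2$, hence $\langle\gammab,\gammab\rangle \ge -(g_\Cc-1)(\rank\betab)^2$. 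By the hypothesis $\langle\etab,\gammab\rangle \le 0$ we have $-\langle\etab,\gammab\rangle \ge 0$. And since $\langle\alphab,\gammab\rangle$ is a negative integer, $-\langle\alphab,\gammab\rangle \ge 1$, so $-m\langle\alphab,\gammab\rangle \ge m$. Adding these up,
\[
\langle\gammab,\gammab\rangle - \langle m\alphab + \etab,\gammab\rangle \;\ge\; m - (g_\Cc - 1)(\rank\betab)^2,
\]
so taking $m > (g_\Cc - 1)(\rank\betab)^2$ makes \eqref{codimension} strictly positive for every admissible $\gammab$ simultaneously; together with the fact that $\operatorname{codim} B_\gammab$ is automatically positive when $(g_\Cc-1)(\rank\betab)^2 < 0$ is impossible (one just needs $m \geq 1$), this gives $\kappa_\betab = \max((g_\Cc - 1)(\rank\betab)^2, 0)$.

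For the case $\etab = -\deltab$, where $\deltab$ is the class of a skyscraper sheaf of length $1$ at a non-stacky point, the hypothesis $\langle\etab,\gammab\rangle \le 0$ for all positive $\gammab$ fails, so I would instead bound $-\langle\etab,\gammab\rangle = \langle\deltab,\gammab\rangle$ from above. One computes directly from Riemann--Roch (\cref{theorem: stacky RR}, \cref{theorem: Edegreeformula}) that $\langle \Oc_x, G\rangle = \rank G$ for a non-stacky point $x$, so $\langle\deltab,\gammab\rangle = \rank\gammab \le \rank\betab$. Feeding $-\langle\etab,\gammab\rangle = -\langle\deltab,\gammab\rangle = -\rank\gammab \geq -\rank\betab$ into the same estimate gives
\[
\langle\gammab,\gammab\rangle - \langle m\alphab + \etab,\gammab\rangle \;\ge\; m - (g_\Cc - 1)(\rank\betab)^2 - \rank\betab,
\]
so the bound $m > \big(g_\Cc - 1 + \tfrac{1}{\rank\betab}\big)(\rank\betab)^2 = (g_\Cc-1)(\rank\betab)^2 + \rank\betab$ suffices; rounding as before yields $\kappa_\betab^+$. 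The main (minor) obstacle is just the bookkeeping: one must double-check that nothing in parts~\ref{it:hom-van-2} and~\ref{it:hom-van-3} of \cref{proposition:Hom-vanishing} imposes a further lower bound on $m$ — part~\ref{it:hom-van-2} is purely a stability argument with no $m$-dependence, and part~\ref{it:hom-van-3}'s codimension estimate $-\langle\etab,\betab\rangle + 1$ is independent of $m$, so indeed the only constraint comes from part~\ref{it:hom-van-1} as analysed above. Verifying $\langle\deltab,\gammab\rangle = \rank\gammab$ and $\langle\deltab,\betab\rangle = \rank\betab \geq 0$ (so that the hypothesis $\langle\etab,\betab\rangle \le 0$ of part~\ref{it:hom-van-3} needs the $\etab = \deltab$ sign, matching the statement) is the one place a small explicit calculation is unavoidable.
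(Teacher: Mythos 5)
Your proposal reproduces the paper's argument: you bound the quantity \eqref{codimension} from below using $\langle\alphab,\gammab\rangle\leq -1$, the hypothesis on $\langle\etab,\gammab\rangle$, and the estimate $-\langle\gammab,\gammab\rangle\leq (g_\Cc-1)(\rank\gammab)^2$ from \cref{proposition:bound for Euler pairing}, and you correctly note that parts \ref{it:hom-van-2} and \ref{it:hom-van-3} of \cref{proposition:Hom-vanishing} impose no further lower bound on $m$. The displayed estimates and the resulting constants $\kappa_\betab$ and $\kappa_\betab^+$ all come out right.

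There is, however, a concrete sign slip in your treatment of $\etab=-\deltab$. You assert $\langle \Oc_x, G\rangle = \rank G$, but the correct value is $\langle\deltab,\gammab\rangle = -\rank\gammab$: one has $\Hom(\Oc_x,G)=0$ (torsion into torsion-free) and $\dim\Ext^1(\Oc_x,G)=\rank G$ by Serre duality, as the paper records in \cref{lemma:Ext vanishing}(a). Note that your sign would contradict the statement's own claim that $\etab=\deltab$ satisfies $\langle\etab,\gammab\rangle\leq 0$. You then write $-\langle\etab,\gammab\rangle=-\langle\deltab,\gammab\rangle$, whereas $\etab=-\deltab$ gives $-\langle\etab,\gammab\rangle=+\langle\deltab,\gammab\rangle$. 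These two errors cancel, so your final value $-\langle\etab,\gammab\rangle=-\rank\gammab\geq-\rank\betab$ and hence the bound $\kappa_\betab^+$ are correct, but the intermediate claims as written are false and should be fixed.
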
 

\begin{proof}
We need to ensure that the quantity of \eqref{codimension} is positive. It suffices to take \[ \kappa \geq \max_\gammab\left(\frac{\langle \gammab, \gammab\rangle - \langle \etab, \gammab\rangle}{\langle\alphab, \gammab\rangle}\right),\] where $\gammab$ runs over the finite list of numerical invariants $\gammab$ of subbundles of $E$, satisfying $\langle \alphab, \gammab\rangle < 0$. Since $\langle \alphab, \gammab\rangle \leq -1$ and by assumption $\langle \etab, \gammab\rangle \leq 0$, it suffices to take 
\[
\kappa \geq \max_\gammab (- \langle \gammab, \gammab\rangle),
\]
where $\gammab$ runs over the invariants of subbundles of $E$. By \cref{proposition:bound for Euler pairing}, we have
\[
-\langle \gammab, \gammab\rangle \leq (g_\Cc - 1)(\rank \gammab)^2
\]
and as we need $\kappa \geq 0$, we conclude the claimed bound. For the case $\etab = -\deltab$ we follow the same argument, but note that $\langle \etab, \gammab\rangle = \rank \gammab$.
\end{proof}

\begin{corollary}\label{corollary:hom-vanishing for semistables}
Let $\alphaw \in \Knot (\Cc)$ be a positive algebraic invariant,
and let $\betab$ be a positive numerical invariant such that $\langle \alphaw, \betab\rangle = 0$. Let $\etaw \in \Knot(\Cc)$ be an effective algebraic invariant. Then there exists $m \gg  0$ such that for any $E \in \Mstackss(k)$ satisfying $\langle \etab, E_i\rangle \geq 0$  for every stable subquotient $E_i$ of $E$, a generic vector bundle $V$ with algebraic invariant $m \alphaw + \etaw$ satisfies
\[ \Hom(V,E) =  0.\]
\end{corollary}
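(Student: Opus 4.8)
The plan is to deduce this from the $\alphab$-stable case already proved in \cref{proposition:Hom-vanishing}\ref{it:hom-van-3}, by a Jordan--Hölder argument carried out in the abelian category of $\alphab$-semistable vector bundles of $\alphab$-slope $\mu_{\alphab}(\betab)$ (which is of finite length and has kernels and cokernels computed in $\Coh$, a fact already used in the construction of the good moduli space). Concretely, every $E\in\Mstackss(k)$ admits a filtration $0=E^{0}\subset E^{1}\subset\cdots\subset E^{\ell}=E$ whose subquotients $E_i\coloneqq E^{i}/E^{i-1}$ are $\alphab$-stable of $\alphab$-slope $\mu_{\alphab}(\betab)$. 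Since $\langle\alphab,\betab\rangle=0$, \cref{remark: slope from pairing} gives $\mu_{\alphab}(\betab)=-\langle\alphab,\Oc_\Cc\rangle$, and hence $\langle\alphab,E_i\rangle=0$ for every $i$; as the Euler pairing descends to $\Knum(\Cc)$, this gives $\langle\alphaw,[E_i]\rangle=0$ as well.

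First I would observe that only finitely many numerical invariants can occur as classes $[E_i]$ as $E$ ranges over $\Mstackss(k)$. Indeed each $E_i$ is a subsheaf of a quotient of $E$, hence a vector bundle (torsion-free sheaves on a smooth stacky curve are locally free), with $1\le\rank E_i\le\rank\betab$ and with multiplicities squeezed between $0$ and those of $\betab$; since $\mu_{\alphab}(E_i)=\mu_{\alphab}(\betab)$ is fixed, \cref{theorem: Edegreeformula} then bounds $\deg\pi_*E_i$. So the classes $[E_i]$ lie in a finite set $\{\betab_1,\dots,\betab_N\}$ depending only on $\alphab$ and $\betab$; each $\betab_j$ is a positive numerical invariant with $\langle\alphaw,\betab_j\rangle=0$, and — because every $\betab_j$ is realised by a stable subquotient of some such $E$ — the invariant $\etab$ pairs with each $\betab_j$ in the way required to invoke \cref{proposition:Hom-vanishing}\ref{it:hom-van-3}.

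The reduction itself is short: given a non-zero $f\colon V\to E$, let $j$ be minimal with $f(V)\subseteq E^{j}$; then the composite $V\to E^{j}\twoheadrightarrow E^{j}/E^{j-1}=E_j$ is non-zero, so $\Hom(V,E)\ne0$ forces $\Hom(V,E_i)\ne0$ for some stable subquotient $E_i$ of $E$. Applying \cref{proposition:Hom-vanishing}\ref{it:hom-van-3} to each $\betab_j$ produces constants $\kappa_j\coloneqq\kappa_{\alphaw,\betab_j,\etaw}$; set $\kappa\coloneqq\max_j\kappa_j$, which depends only on $\alphaw,\betab,\etaw$. For $m>\kappa$ and a fixed $E\in\Mstackss(k)$ satisfying the hypothesis, each of its finitely many Jordan--Hölder factors $E_i$ has class in $\{\betab_1,\dots,\betab_N\}$, so \cref{proposition:Hom-vanishing}\ref{it:hom-van-3} yields a dense open substack of $\Mstack[m\alphaw+\etaw]$ over which $\Hom(\,\cdot\,,E_i)=0$; intersecting these finitely many dense opens, a general $V\in\Mstack[m\alphaw+\etaw](k)$ satisfies $\Hom(V,E_i)=0$ for all $i$ simultaneously, and hence $\Hom(V,E)=0$ by the reduction.

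The one delicate point — and the only real obstacle — is uniformity: \cref{proposition:Hom-vanishing} produces, for each target bundle separately, only a \emph{general} $V$, so to obtain a single $V$ that kills $\Hom(V,E)$ one must intersect the general loci attached to all Jordan--Hölder factors of $E$. This is legitimate precisely because these factors are finite in number and, more importantly, because their classes range over a fixed finite set independent of $E$, which is simultaneously what makes the threshold $\kappa$ uniform in $E$. If one wants an explicit value of $m$, one substitutes the effective bound of \cref{prop:eff bdd semiample} for each $\kappa_j$ (valid when $\etab=0$ or $\etab$ is the class of a skyscraper sheaf at a non-stacky point), obtaining $\kappa=\max_j\kappa_{\betab_j}$ in terms of $g_\Cc$ and the ranks of the $\betab_j$.
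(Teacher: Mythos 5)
Your proof is correct and follows essentially the same route as the paper: pass to a Jordan--Hölder filtration of $E$, apply \cref{proposition:Hom-vanishing}\ref{it:hom-van-3} to each stable factor, and intersect the finitely many general loci (the paper phrases the last step as an induction on the short exact sequences $0 \to E^{(i-1)} \to E^{(i)} \to E_i \to 0$, which is equivalent to your ``minimal $j$'' reduction). Your explicit justification that the classes of the stable factors range over a finite set independent of $E$, making the threshold $m$ uniform, is a point the paper only addresses implicitly via the effective bound of \cref{prop:eff bdd semiample} in \cref{rmk: eff bdd semiample}.
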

\begin{proof}
The proof inductively considers the Jordan-H\"{o}lder filtration $0 \subsetneq E^{(1)} \subset \cdots \subsetneq E^{(r)} = E$ of $E$ whose subquotients $E_i = E^{(i)}/E^{(i-1)}$ are $\alphab$-stable. By applying \cref{proposition:Hom-vanishing} to each $E_i$ we deduce for $m \gg 0$ that a general vector bundle $V$ with algebraic invariant $m \alphaw + \etaw$ satisfies  $\Hom(V,E_i) = 0$ for each $ i = 1,\dots, r$.
By inductively applying $\Hom(V,-)$ to the exact sequences $0 \rightarrow E^{(i-1)} \rightarrow E^{(i)} \rightarrow E_i \rightarrow 0$, we obtain $\Hom(V,E) = 0$.
\end{proof}

\begin{remark}\label{rmk: eff bdd semiample}
In fact, the same effective bound for $m$ giving Hom-vanishing for stables given in \cref{prop:eff bdd semiample} also work for the Hom-vanishing of semistable vector bundles, as the proof of \cref{corollary:hom-vanishing for semistables} involves applying \cref{proposition:Hom-vanishing} to subinvariants.
\end{remark}

Thus semistability can be characterised in terms of a Hom-vanishing condition as follows.

\begin{proposition}\label{proposition:semistability equivalent to Hom-vanishing}
Let $\alphaw \in \Knot (\Cc)$ be a positive algebraic invariant,
and let $\betab$ be a positive numerical invariant such that $\langle \alphaw, \betab\rangle = 0$. 
Then $E \in \Mstack(k)$ is $\alphab$-semistable if and only if there is a vector bundle $V$ with algebraic invariant $m \alphaw$ for some $m > 0$ such that
\[ \Hom(V,E) = \Ext^1(V,E) = 0.\]
\end{proposition}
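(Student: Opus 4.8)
The plan is to establish the two implications separately: the ``only if'' direction is essentially \cref{corollary:hom-vanishing for semistables}, while the ``if'' direction is a short homological contradiction.

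For the ``only if'' direction, assume $E$ is $\alphab$-semistable. I would apply \cref{corollary:hom-vanishing for semistables} with $\etaw = 0$: since then $\etab = 0$, the hypothesis $\langle \etab, E_i\rangle \ge 0$ on the $\alphab$-stable subquotients $E_i$ of $E$ is automatic, so there is some $m > 0$ and a vector bundle $V$ with algebraic invariant $m\alphaw + 0 = m\alphaw$ such that $\Hom(V,E) = 0$. To upgrade this to $\Ext^1(V,E) = 0$, note that $[V]$ has numerical class $m\alphab$ and $\langle \alphab, \betab\rangle = 0$, hence
\[
\littleext{0}{V}{E} - \littleext{1}{V}{E} = \langle [V], [E]\rangle = m\langle \alphab, \betab\rangle = 0 ,
\]
and since the first term vanishes so does the second.

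For the ``if'' direction, suppose such a $V$ exists and, for contradiction, that $E$ is not $\alphab$-semistable. Then there is a proper nonzero subsheaf $F \subsetneq E$ with $\mu_\alphab(F) > \mu_\alphab(E)$, and $\rank F > 0$ since $E$ is locally free. By \cref{remark: slope from pairing}, using $\langle \alphab, \betab\rangle = 0$, this strict inequality of $\alphab$-slopes is equivalent to $\langle \alphab, [F]\rangle > 0$; multiplying by $m$ gives $\langle [V], [F]\rangle > 0$. On the other hand, applying $\Hom(V,-)$ to $0 \to F \to E$ produces an inclusion $\Hom(V,F) \hookrightarrow \Hom(V,E) = 0$, so $\Hom(V,F) = 0$ and therefore $\langle [V],[F]\rangle = -\littleext{1}{V}{F} \le 0$, contradicting the previous inequality. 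Hence $E$ is $\alphab$-semistable.

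There is no real obstacle here: all of the substance is already contained in \cref{corollary:hom-vanishing for semistables}, and ultimately in the dimension count of \cref{proposition:Hom-vanishing}. The only point requiring care is the bookkeeping in the converse — namely that a failure of semistability yields a destabilising subsheaf of strictly positive rank, so that the translation between $\alphab$-slopes and the Euler pairing of \cref{remark: slope from pairing} applies and the sign of $\langle [V],[F]\rangle$ forced by $\Hom(V,F)=0$ directly contradicts destabilisation; no passage to the saturated subbundle is needed.
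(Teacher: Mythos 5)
Your proposal is correct and follows essentially the same route as the paper: the forward direction is exactly \cref{corollary:hom-vanishing for semistables} together with the observation that $\langle m\alphaw,\betab\rangle=0$ makes $\Hom$-vanishing and $\Ext^1$-vanishing equivalent, and the converse is the same left-exactness argument $\Hom(V,F)\hookrightarrow\Hom(V,E)=0$ giving $\langle m\alphab,[F]\rangle=-\littleext{1}{V}{F}\le 0$ via \cref{remark: slope from pairing}. The only difference is that the paper phrases the converse as a direct verification for every subbundle rather than a proof by contradiction, which is purely cosmetic.
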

\begin{proof}
Note that the assumption $\langle \alphaw ,\betab \rangle=0$ gives $\dim \Hom(V,E) = \dim \Ext^1(V,E)$, so the latter statement is equivalent to simply requiring $\Hom(V,E) = 0$. The forward direction is \cref{corollary:hom-vanishing for semistables}. Conversely suppose $V$ has invariant $m\alphaw$ and satisfies $\Hom(V,E) =  0$. To show $E$ is $\alphab$-semistable we consider a subbundle $E' \subset  E$ with quotient $E''$ and apply $\Hom(V,-)$ to the exact sequence $0 \rightarrow E' \rightarrow E \rightarrow E'' \rightarrow 0$ to deduce $\Hom(V,E') = 0$. This implies $\langle m \alphab , [E'] \rangle  = - \dim \Ext^1(V,E') \leq 0 = \langle \alphab ,\betab \rangle$, from which we obtain $\mu_{\alphab}(E') \leq \mu_{\alphab}(E)$.
\end{proof}

\subsection{Ext-vanishing and separating stable bundles}
\label{sec:ext-vanishing}

Here we prove the key results that enable us to deduce ampleness of the determinantal line bundle. First, we use the fact that Serre duality sends semistable vector bundles to semistable vector bundles (see \cref{theorem:SD-stability}) to translate Hom-vanishing results into Ext-vanishing results. Throughout $\deltaw=(\deltab,\Oc_{\Cc}(x))$ denotes the numerical invariant of a degree $1$ torsion sheaf supported at a non-stacky point $x$, that is $\deltab=(0,1,\underline{0})$.

\begin{lemma}\label{lemma:Ext vanishing} Let $\alphaw \in \Knot (\Cc)$ be a positive algebraic invariant, and let $\betab$ be a positive numerical invariant such that $\langle \alphaw, \betab\rangle = 0$. Then for every $m > \kappa_{\betab}$ and for every $E \in \Mstackss(k)$, a general vector bundle $V$ with invariant $m\alphaw - \deltaw$ satisfies \[\Ext^1(V,E)=0.\]
\end{lemma}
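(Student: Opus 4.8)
The plan is to reduce the statement to the Hom-vanishing result of \cref{corollary:hom-vanishing for semistables} via Serre duality, using \cref{theorem:SD-stability}. First I would rewrite the desired Ext-vanishing in terms of Hom-vanishing on the Serre-dual side: by \cref{theorem: Serre duality}, $\Ext^1(V,E) \cong \Hom(E, V \otimes \omega_\Cc)^\vee = \Hom(E, \SD(V^\vee))^\vee$. Setting $W := \SD(V^\vee) = V^\vee \otimes \omega_\Cc$ (equivalently $W = \SD(V)^\vee \otimes \omega_\Cc^{\otimes 2}$, but it is cleaner to think of $W$ as having numerical invariant $\SD(\alphab^\vee)$-type class), the vanishing $\Ext^1(V,E) = 0$ becomes $\Hom(E, W) = 0$. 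So the roles of source and target get swapped, and I want to apply the Hom-vanishing corollary with $E$ in the ``$V$'' slot and $W$ in the ``$E$'' slot — but that is backwards, since in \cref{corollary:hom-vanishing for semistables} it is the \emph{target} $E$ that is fixed and semistable while the \emph{source} $V$ varies.

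The correct way around is to instead dualize the statement of \cref{corollary:hom-vanishing for semistables} directly, i.e. to prove an ``Ext-vanishing'' analogue of \cref{proposition:Hom-vanishing} where the varying bundle $V$ appears in the first argument of $\Ext^1$. Concretely: $\Ext^1(V,E) = 0$ iff $\Hom(\SD(E^\vee), \SD(V^\vee)^\vee \otimes \cdots)$... — rather than juggle duals, the slicker route (and the one the section heading ``Ext-vanishing'' suggests the authors intend) is: apply Serre duality to get $\Ext^1(V,E)^\vee \cong \Hom(E, V\otimes\omega_\Cc)$, then observe that $V \otimes \omega_\Cc$ is a \emph{general} vector bundle with its numerical/algebraic invariant as $V$ ranges over general bundles with invariant $m\alphaw - \deltaw$ (tensoring by a fixed line bundle is an isomorphism of moduli stacks). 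Now fix $E \in \Mstackss(k)$; it is $\alphab$-semistable, with $\langle \alphab, \betab\rangle = 0$. Its Serre dual $\SD(E) = E^\vee \otimes \omega_\Cc$ is $\alphab^\vee$-semistable by \cref{theorem:SD-stability} — but actually I want $E$ itself, not its dual, in the source slot of a Hom.

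Let me restart the reduction cleanly. We have $\Ext^1(V,E) \cong \Hom(E, V \otimes \omega_\Cc)^\vee$. I will apply \cref{corollary:hom-vanishing for semistables} with: the fixed semistable bundle taken to be $E$ (numerical invariant $\betab$, stability $\alphab$), the varying bundle taken to be $W := V \otimes \omega_\Cc$. For this to match the corollary, $W$ must have algebraic invariant of the form $m\alphaw' + \etaw'$ for a positive $\alphaw'$ with $\langle \alphaw', \betab\rangle = 0$ and an effective $\etaw'$; here $W$ has algebraic invariant $(m\alphaw - \deltaw)\otimes\omegab = m(\alphaw \otimes \omegab) - \deltaw\otimes\omegab$. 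Since $\alphab\otimes\omegab$ is again positive (multiplicities are unchanged under tensoring by a line bundle, rank is unchanged) — and we may need to re-orthogonalize, but $\langle \alphab \otimes \omegab, \betab \rangle = -\langle \betab, \alphab \otimes \omegab \otimes \omegab\rangle$ is not obviously zero, so this is where a small adjustment via \cref{lemma how to modify to orthogonal alpha} or \eqref{eq: pairing and SD} is needed. Note $\deltaw \otimes \omegab$ is still the class of a torsion sheaf of length $1$ at a (possibly stacky — careful: $x$ is non-stacky, so $\omega_\Cc$ is trivial there) point, so $\etaw' = -\deltaw$ still, or more precisely $\etaw' = -(\deltaw \otimes \omegab) = -\deltaw$ since $x$ is non-stacky and $\omega_\Cc|_x$ is trivial. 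The side condition ``$\langle \etab', E_i\rangle \geq 0$ for every stable subquotient $E_i$ of $E$'' becomes $\langle -\deltab, E_i\rangle = -\rank E_i \geq 0$, which fails! So in fact the naive application gives the wrong sign, confirming that one must go through \cref{proposition:Hom-vanishing}\ref{it:hom-van-1}–\ref{it:hom-van-2} and the explicit bound $\kappa_\betab$ of \cref{prop:eff bdd semiample} in the $\etab = -\deltab$ case ($\kappa_\betab^+$), rather than \cref{corollary:hom-vanishing for semistables} directly.

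So the actual plan: (1) By \cref{theorem: Serre duality}, reduce $\Ext^1(V,E) = 0$ to $\Hom(E, V \otimes \omega_\Cc) = 0$. (2) By \cref{theorem:SD-stability}, the $\alphab$-semistability of $E$ with $\langle \alphab, E\rangle = 0$ is equivalent to $\alphab^\vee$-semistability of $\SD(E)$; apply this to put things in the form where the \emph{fixed} bundle is $\SD(E)$ and the \emph{varying} bundle $V$ (up to dualizing/twisting) is in the first argument — i.e. $\Hom(E, V\otimes\omega_\Cc)^\vee \cong \Hom(\SD(V\otimes\omega_\Cc)^\vee \cdots$; the upshot is that $\Ext^1(V, E) = 0 \iff \Hom(V', \SD(E)^\vee\otimes\omegab)$ for a suitable general $V'$ with algebraic invariant $m(\SD(\alphaw^\vee)) + \deltaw'$... (3) Track invariants so that the varying source bundle $V'$ has invariant $m\alphaw'' + \deltaw$ with $\etaw'' = +\deltaw$ (not $-\deltaw$) — the two dualizations should turn the $-\deltaw$ into $+\deltaw$ — so that the positivity hypothesis $\langle \deltab, E_i\rangle = \rank E_i \geq 0$ of \cref{corollary:hom-vanishing for semistables} is satisfied. (4) Apply \cref{corollary:hom-vanishing for semistables} with $\etaw = \deltaw$, whose effective bound is $\kappa_\betab$ by \cref{prop:eff bdd semiample} and \cref{rmk: eff bdd semiample}, noting $\SD$ preserves ranks so $\rank$ and hence the genus-dependent bound $\kappa_\betab$ are unchanged. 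The main obstacle is bookkeeping the algebraic (not just numerical) invariants through the two applications of Serre duality: one must check the determinant twists match up (the $L$-component), that $\alphab^\vee$ is still positive and can be re-orthogonalized to $\betab$ without changing stability, and that the single $-\deltaw$ on the $V$-side becomes a $+\deltaw$ on the Hom-side after dualizing — this sign flip is exactly what makes the side condition of \cref{corollary:hom-vanishing for semistables} hold automatically, so getting it right is essential. Once the invariants are matched, the proof is a two-line citation of \cref{corollary:hom-vanishing for semistables} (or \cref{proposition:Hom-vanishing}) plus \cref{theorem:SD-stability} and \cref{theorem: Serre duality}.
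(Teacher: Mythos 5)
Your final plan coincides with the paper's proof (Serre duality plus \cref{theorem:SD-stability}, then the Hom-vanishing corollary with $\etaw=\deltaw$ and the bound $\kappa_\betab$), but the write-up never quite lands on the clean form of the reduction, and it contains a sign error on which your verification of the key hypothesis depends. The identity to use is
\[
\Ext^1(V,E)\;\cong\;\Hom(V^\vee,\SD(E))^\vee ,
\]
which is your first-paragraph observation $\Ext^1(V,E)\cong\Hom(E,V\otimes\omega_\Cc)^\vee$ followed by one more flip, since $\Hom(E,V\otimes\omega_\Cc)=H^0(E^\vee\otimes V\otimes\omega_\Cc)=\Hom(V^\vee,E^\vee\otimes\omega_\Cc)$. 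You discard this as ``backwards'', and in your step (2) you write the target as $\SD(E)^\vee\otimes\omegab$, which simplifies back to $E$ and is circular. With the correct identity everything is immediate: $V^\vee$ has algebraic invariant $m\alphaw^\vee+\deltaw$ (as $\deltaw^\vee=-\deltaw$), $\SD(E)$ is $\alphab^\vee$-semistable by \cref{theorem:SD-stability}, the orthogonality $\langle\alphab^\vee,\SD(\betab)\rangle=0$ follows directly from \eqref{eq: pairing and SD} with no re-orthogonalisation (so the ``main obstacle'' you defer is not an obstacle), and \cref{corollary:hom-vanishing for semistables} with $\etaw=\deltaw$, together with the bound $\kappa_\betab$ from \cref{prop:eff bdd semiample} and \cref{rmk: eff bdd semiample} (ranks are preserved by $\SD$), gives the vanishing.

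The concrete error: $\langle\deltab,G\rangle=-\rank G$, not $+\rank G$, for any vector bundle $G$ (one has $\Hom(\Oc_x,G)=0$ while $\Ext^1(\Oc_x,G)\cong\Hom(G,\Oc_x)^\vee$ has dimension $\rank G$). The condition actually needed to run \cref{proposition:Hom-vanishing}\ref{it:hom-van-3} on each stable subquotient is $\langle\etab,\cdot\rangle\le 0$ --- the inequality ``$\ge 0$'' printed in \cref{corollary:hom-vanishing for semistables} is a sign slip, as its own proof and \cref{prop:eff bdd semiample} show --- and it is $-\rank\le 0$ that makes $\etab=\deltab$ admissible. As written, your step (3) certifies the hypothesis via $\langle\deltab,E_i\rangle=+\rank E_i\ge 0$, i.e.\ via two sign errors cancelling. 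Correct the pairing, state the condition as $\le 0$, and carry out the one line of bookkeeping you postponed, and your argument becomes exactly the paper's.
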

\begin{proof} Note that $\Ext^1(V,E) = \Hom(V^\vee,\SD(E))^*$ and that $V^\vee$ has invariant $m\alphaw^\vee + \deltaw$. Note that: \begin{enumerate}[label=(\alph*)] \item  $\langle \deltab,  G \rangle = -\rank(G)  \leq 0$ for every bundle $G$.
    \item if $E$ is $\alphab$-semistable, then $\SD(E)$ is $\alpha^\vee$-semistable (see \cref{theorem:SD-stability});
    \item if $\langle \alphab, \betab \rangle =0$, then also $\langle \alphab^\vee, \SD(\betab) \rangle=0$ (see \eqref{eq: pairing and SD}).
\end{enumerate}
These three conditions ensure that we can apply \cref{corollary:hom-vanishing for semistables} (and \cref{rmk: eff bdd semiample}) to the $\alphab^\vee$-semistable sheaf $\SD(E)$ and conclude the argument.
\end{proof}

Before we can show that the determinantal line bundle has enough sections to separate most points, we first need a lemma which is a step towards producing the vector bundle defining the section we want: rather than constructing a vector bundle with algebraic invariant $m\alphaw$ we construct a vector bundle with invariant $m \alphaw - \deltaw$. We will later extend $V$ to construct the section needed to separate points.

\begin{lemma}\label{lemma:construction of V}
Let $E_0, \dots , E_\ell$ be $\alphab $-stable bundles whose numerical invariants $\betab_i$ satisfy $\langle \alphaw ,\betab_i \rangle =0$ and such that $E_0 \not\cong E_i$ for every $i=1,\dots, \ell$. Then for $m > \max_i \kappa^+_{2\betab_i}$, a generic  vector bundle $V$ with invariant $m \alphaw - \deltaw$ has the following properties:
\begin{enumerate}[label=(\roman*)]
\item \label{it:lem-i} $\Ext^1(V,E_i) = 0$ for all $i = 0,\dots, \ell$;
\item \label{it:lem-ii} for all $i = 0,\dots, \ell$ any non-zero homomorphism $V \rightarrow E_i$ is surjective;
\item \label{it:lem-iii} for all $i = 1,\dots , \ell$ and non-zero homomorphisms $f_0 \colon V \rightarrow E_0$ and $f_i \colon V \rightarrow E_i$, the homomorphism $f=(f_0,f_i) \colon V \rightarrow E_0 \oplus E_i$ is surjective.
\end{enumerate}
\end{lemma}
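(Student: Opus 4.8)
The plan is to establish each of \ref{it:lem-i}--\ref{it:lem-iii} as a dense-open condition on the irreducible smooth stack $\Mstack[m\alphaw - \deltaw]$, and then intersect the finitely many resulting dense opens. Conditions \ref{it:lem-i} and \ref{it:lem-ii} should be immediate from what has already been proved: \ref{it:lem-i} is \cref{lemma:Ext vanishing} applied to each of $E_0,\dots,E_\ell$ (valid once $m > \kappa_{\betab_i}$, and $\kappa_{\betab_i} \le \kappa^+_{2\betab_i}$ by \cref{prop:eff bdd semiample} and \cref{proposition:bound for Euler pairing}), while \ref{it:lem-ii} is the surjectivity clause \cref{proposition:Hom-vanishing}\ref{it:hom-van-2} applied to each $\alphab$-stable $E_i$ with $\etaw = -\deltaw$, using the effective bound $\kappa^+_{\betab_i} \le \kappa^+_{2\betab_i}$ furnished by the $\etab = -\deltab$ case of \cref{prop:eff bdd semiample}. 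So the real work is \ref{it:lem-iii}.

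For \ref{it:lem-iii}, fix $i \in \{1,\dots,\ell\}$ and let $B \subset \Mstack[m\alphaw-\deltaw]$ be the locus of those $V$ for which there exist non-zero $f_0 \colon V \to E_0$ and $f_i \colon V \to E_i$ with $f = (f_0,f_i)$ not surjective; the goal is $\operatorname{codim} B > 0$. Shrinking to the dense open from \ref{it:lem-ii}, one may assume $f_0$ and $f_i$ are surjective, so $G \coloneqq \im(f)$ is a locally free subsheaf of $E_0 \oplus E_i$ that maps onto each factor and is proper. The key point — and the only place the hypothesis $E_0 \not\cong E_i$ is used — is that $\gammab \coloneqq [G]$ then satisfies $\langle \alphab , \gammab \rangle \le -1$. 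Indeed, projecting to $E_0$ identifies $G/(G\cap E_i) \cong E_0$, so $\langle\alphab,\gammab\rangle = \langle\alphab,\betab_0\rangle + \langle\alphab,[G\cap E_i]\rangle = \langle\alphab,[G\cap E_i]\rangle$; if $G\cap E_i = 0$ then $G \cong E_0$ surjects onto $E_i$, but a non-zero morphism between $\alphab$-stable bundles of equal $\alphab$-slope is an isomorphism, contradicting $E_0 \not\cong E_i$; and $G \cap E_i = E_i$ would force $G = E_0 \oplus E_i$, which is excluded; hence $G\cap E_i$ is a non-zero proper subsheaf of the $\alphab$-stable $E_i$, and \cref{remark: slope from pairing} gives $\langle\alphab,[G\cap E_i]\rangle < 0$.

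Next comes boundedness: only finitely many $\gammab$ can occur. As in the opening of the proof of \cref{proposition:Hom-vanishing}, a general $V \in \Mstack[m\alphaw - \deltaw]$ has $\pi_*V$ that is $\ell$-regular for some $\ell$ independent of $m$; since $\pi_*$ is exact and $V$ surjects onto $G$, the sheaf $\pi_*G$ is a quotient of $\pi_*V$, hence also $\ell$-regular (a quotient of an $\ell$-regular sheaf on a curve is $\ell$-regular), which bounds $\deg\pi_*G$ below by a quantity linear in $\rank\gammab$ and independent of $m$. Together with $\rank\gammab \le \rank\betab_0 + \rank\betab_i$, the multiplicities of $G$ being dominated by those of $E_0\oplus E_i$, and $\deg\pi_*G$ being bounded above because $\langle\alphab,\gammab\rangle < 0$ (via \cref{theorem: Edegreeformula}), this gives finiteness. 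For each such $\gammab$, the locus $B_\gammab \subset B$ on which $\im(f)$ has invariant $\gammab$ lies in the image of the representable middle-term map $\Ec_\gammab \to \Mstack[m\alphaw - \deltaw]$, where $\Ec_\gammab \subset \Extstack_{\gammab, m\alphab - \deltab - \gammab}$ is the substack on which the pushforward determinant of the middle term equals $\det\pi_*(m\alphaw - \deltaw)$; the dimension estimate is then literally the computation \eqref{codimension} with $\etab = -\deltab$, namely
\[
\operatorname{codim} B_\gammab \;\ge\; \langle \gammab, \gammab \rangle - m\langle \alphab, \gammab \rangle - \rank\gammab \;\ge\; m - \bigl( -\langle \gammab, \gammab \rangle + \rank\gammab \bigr),
\]
using $\langle\alphab,\gammab\rangle \le -1$. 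By \cref{proposition:bound for Euler pairing}, $-\langle\gammab,\gammab\rangle \le (g_\Cc - 1)\rank(\gammab)^2$, and since $\rank\gammab \le \rank\betab_0 + \rank\betab_i \le 2\max(\rank\betab_0,\rank\betab_i)$ one checks $-\langle\gammab,\gammab\rangle + \rank\gammab \le \kappa^+_{2\betab_j}$ for whichever of $j \in \{0,i\}$ has the larger rank; hence $m > \max_{j \in \{0,\dots,\ell\}} \kappa^+_{2\betab_j}$ forces $\operatorname{codim} B_\gammab > 0$. Taking the finite union over $\gammab$, then over $i \in \{1,\dots,\ell\}$, and intersecting with the dense opens from \ref{it:lem-i} and \ref{it:lem-ii} completes the plan. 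I expect the main obstacle to be exactly the sign analysis in \ref{it:lem-iii}: ensuring $\langle\alphab,\gammab\rangle \le -1$ — which is where $E_0 \not\cong E_i$ is essential, since otherwise $G$ could be (isomorphic to) a graph with $\langle\alphab,\gammab\rangle = 0$ and the codimension estimate would collapse — and checking that the resulting numerics match the stated bound $\kappa^+_{2\betab_i}$; the remainder runs parallel to \cref{proposition:Hom-vanishing}.
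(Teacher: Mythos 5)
Your proposal is correct, and parts \ref{it:lem-i} and \ref{it:lem-ii} are handled exactly as in the paper (via \cref{lemma:Ext vanishing} and \cref{proposition:Hom-vanishing}\ref{it:hom-van-2} respectively). For part \ref{it:lem-iii}, however, you take a genuinely different route. The paper applies \cref{proposition:Hom-vanishing}\ref{it:hom-van-1} to the single bundle $E_0 \oplus E_i$ as a black box: for generic $V$ the image $G$ of $f$ satisfies $\langle \alphab, G\rangle \geq 0$, hence $G$ is a same-slope subsheaf of the polystable bundle $E_0 \oplus E_i$; since the category of semistables of fixed slope is abelian with the stables as simple objects and $E_0 \not\cong E_i$, such a subsheaf is $0$, $E_0$, $E_i$ or everything, and surjectivity of both projections rules out the proper non-zero options. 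You instead inline a fresh codimension count on the bad locus for \ref{it:lem-iii}, replacing the semisimplicity argument with the elementary observation that $G \cap E_i$ is a non-zero proper subsheaf of the stable bundle $E_i$, which forces $\langle \alphab, \gammab\rangle \leq -1$ and makes the estimate \eqref{codimension} strictly positive for $m > \kappa^+_{2\betab_i}$. What the paper's approach buys is brevity — no new dimension count is needed, only a short structural argument — at the cost of invoking the abelian-category structure of fixed-slope semistables. What your approach buys is self-containedness at the level of sheaf theory (only stability of $E_i$ is used, not semisimplicity of the polystable category) and a direct verification that the effective bound $\kappa^+_{2\betab_i}$ suffices, which the paper only records afterwards in \cref{rmk effective bound for finite map to proj space}; the price is that you essentially re-prove the relevant case of \cref{proposition:Hom-vanishing}\ref{it:hom-van-1} including the boundedness of the possible $\gammab$. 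Both arguments are sound and the numerics agree.
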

\begin{proof}
Part \ref{it:lem-i} follows from \cref{lemma:Ext vanishing}. For Part \ref{it:lem-ii}, we can apply \cref{proposition:Hom-vanishing} \ref{it:hom-van-2} to the $\alphab$-stable bundles $E_0, \dots, E_\ell$ to deduce that, for a $m > \kappa_{\betab}$ a generic vector bundle $V$ with invariant $m \alphaw - \deltaw$ the only non zero maps $V \to E_i$ are necessarily surjective.

For Part \ref{it:lem-iii}, we apply \cref{proposition:Hom-vanishing} \ref{it:hom-van-1} to $E_0 \oplus E_i$ to deduce that the image of $f$, which we denote $G_i$, has necessarily the same $\alphab$-slope of $E_0 \oplus E_i$. Note that, by \eqref{equation:slope and euler char} and the fact that $\langle \alphab, \beta_0 \rangle = 0 = \langle \alphab, \betab_i \rangle$, this slope coincide with $\mu_\alphab(E_0)=\mu_\alpha(E_i)$.
Since the map $f$ is non zero, this implies that if $f$ is not surjective, then $G_i$ is either isomorphic to either $E_0$ or $E_i$. Since both $f_0$ and $f_i$ are surjective, it follows that also the projections $G_i \to E_0$ and $G_i \to E_i$ are surjective. Since $E_0 \not\cong E_i$, the conditions $G_i \cong E_i$ or $G_i \cong E_0$ are impossible to achieve, thus the map $f$ is surjective.
\end{proof}

\begin{remark}\label{rmk effective bound for finite map to proj space}
Note the bound $m> \max_{i}\kappa_{2\betab_i}^+$, where this constant is given in \cref{prop:eff bdd semiample}. The factor $2$ arises, as we apply \cref{proposition:Hom-vanishing} to $E_0 \oplus E_i$, which has invariant $\betab_0 + \betab_i \leq \max_{i}2\betab_i$.
\end{remark}

To separate certain polystable vector bundles, we now construct a vector bundle $H$ with algebraic invariant $m \alphaw $ as a Hecke extension of a skyscraper sheaf at a non-stacky point $x \in \Cc$ by a vector bundle $V$ with algebraic invariant $m\alphaw - \deltaw$ as described before \cref{lemma:construction of V}.

\begin{proposition} \label{prop:separating polystables}
Let $E = E_1 \oplus \cdots \oplus E_\ell$ and $F \in F_1 \oplus \cdots \oplus F_{\ell'}$ be $\alphab $-polystable vector bundles on $\Cc$ with numerical invariant $\betab$. If $\langle \alphab ,\betab \rangle = 0$ and none of the stable summands $E_i$ are isomorphic to the stable summand $F_1$, then for $m > \kappa_{2\betab}^+$ there exists a vector bundle $H$ with algebraic invariant $m \alphaw $ such that
\begin{equation}\label{eq:separating polystables} \Hom(H,E) = 0 \quad \text{and} \quad \Hom(H,F) \neq  0.
\end{equation}
Hence there is a section of (a power of) the determinantal line bundle separating $E$ and $F$.
\end{proposition}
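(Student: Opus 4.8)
The plan is to realise $H$ as a Hecke extension
\[0 \longrightarrow V \longrightarrow H \longrightarrow \Oc_x \longrightarrow 0\]
of the length-one skyscraper $\Oc_x$ at the non-stacky point $x$ by a sufficiently general vector bundle $V$ with algebraic invariant $m\alphaw - \deltaw$. Since $[\Oc_x] = \deltaw$, this gives $[H] = m\alphaw$ and hence $\Lc_H = \Lc_\alphaw^{\otimes m}$, so by \eqref{prop:sigma} it suffices to arrange $\Hom(H,E) = 0$ and $\Hom(H,F) \neq 0$: indeed $\langle m\alphab, \betab\rangle = 0$ forces $\dim\Ext^1(H,-) = \dim\Hom(H,-)$ on bundles of invariant $\betab$, so then $\sigma_H(E) \neq 0$ and $\sigma_H(F) = 0$. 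I would obtain $V$ by applying \cref{lemma:construction of V} to the $\alphab$-stable bundles $E_0 := F_1$ and $E_1, \dots, E_\ell$, a list of the pairwise non-isomorphic stable summands of $E$; the hypothesis that no summand of $E$ is isomorphic to $F_1$ is exactly the requirement $E_0 \not\cong E_i$, and as each $\betab_i$ is dominated by $\betab$ the bound $m > \kappa_{2\betab}^+$ implies $m > \max_i \kappa^+_{2\betab_i}$. Observe also that, $E$ and $F$ being $\alphab$-polystable of invariant $\betab$ with $\langle\alphab,\betab\rangle = 0$, every stable summand $G$ satisfies $\langle\alphab,[G]\rangle = 0$ by \cref{remark: slope from pairing}. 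Thus for general $V$ we get $\Ext^1(V,E_i) = \Ext^1(V,F_1) = 0$, surjectivity of every non-zero morphism from $V$ into one of these bundles, and surjectivity of every non-zero pair $(f_0,f_i)\colon V \to F_1 \oplus E_i$.

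Extensions $0 \to V \to H \to \Oc_x \to 0$ are classified by $\Ext^1(\Oc_x,V)$, which, $V$ being locally free and $x$ non-stacky, is canonically identified up to a fixed one-dimensional twist with the fibre $V|_x$, and the resulting $H = H_v$ is a vector bundle exactly when the corresponding vector $v \in V|_x$ is non-zero. For any bundle $W$ with $\Ext^1(V,W) = 0$, applying $\Hom(-,W)$, using $\Hom(\Oc_x,W) = 0$ and that $\Cc$ has homological dimension one, yields an exact sequence
\[0 \longrightarrow \Hom(H_v,W) \longrightarrow \Hom(V,W) \xrightarrow{\ \partial_W\ } \Ext^1(\Oc_x,W) \longrightarrow \Ext^1(H_v,W) \longrightarrow 0,\]
and by naturality of the connecting homomorphism $\partial_W$ is identified with $g \mapsto g|_x(v)\colon \Hom(V,W) \to W|_x$. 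If moreover $\langle\alphab,[W]\rangle = 0$ then $\langle m\alphab - \deltab,[W]\rangle = \rank W$ (as $\langle\deltab,[W]\rangle = -\rank W$), so $\Hom(V,W)$ and $W|_x$ both have dimension $\rank W$; hence $\partial_W$ is injective iff an isomorphism, and $\Hom(H_v,W) = 0$ iff $\Ext^1(H_v,W) = 0$. Applying this with $W = E_i$ and $W = F_1$ reduces everything to choosing $v \neq 0$ in $V|_x$ for which $\partial_{E_i}$ is an isomorphism for every $i$, while $\partial_{F_1}$ fails to be injective.

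For the $F_1$-condition, fix any non-zero $g_0\colon V \to F_1$; it is surjective, so $\ker(g_0|_x) \subset V|_x$ has dimension $\rank V - \rank F_1 \geq 1$ for $m$ as in the statement, and any $0 \neq v \in \ker(g_0|_x)$ puts $g_0$ in $\ker\partial_{F_1} = \Hom(H_v,F_1)$, so $\Hom(H_v,F) \neq 0$. It then remains to choose such a $v$ so that no non-zero $g\colon V \to E_i$ has $v \in \ker(g|_x)$. For fixed $i$, the bad locus in $\PP\ker(g_0|_x)$ is the image of the incidence variety $B_i \subset \PP\Hom(V,E_i) \times \PP\ker(g_0|_x)$ cut out by $g|_x(v) = 0$, whose fibre over $[g]$ is $\PP\ker\bigl((g_0,g)|_x\bigr)$; since $g_0$ and $g$ are non-zero, hence surjective, and $F_1 \not\cong E_i$, \cref{lemma:construction of V} \ref{it:lem-iii} makes $(g_0,g)|_x$ surjective, so this fibre has dimension at most $\rank V - \rank F_1 - \rank E_i - 1$, giving $\dim B_i \leq \rank V - \rank F_1 - 2 < \dim\PP\ker(g_0|_x)$. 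Hence $\mathrm{pr}(B_i)$ is a proper closed subset, and as there are finitely many $i$ a general $[v]$ lies in none of them; for the corresponding $H := H_v$ we get $\Hom(H,E_i) = 0$ for all $i$, so $\Hom(H,E) = \Ext^1(H,E) = 0$ and $\sigma_H(E) \neq 0$, while $\sigma_H(F) = 0$, as required. I expect the main obstacle to be precisely this last dimension estimate: it is the joint surjectivity of \cref{lemma:construction of V} \ref{it:lem-iii} that keeps each $B_i$ below the critical dimension $\dim\PP\ker(g_0|_x)$, and without it the bad loci could cover the whole projective space; the rest is formal bookkeeping with the long exact sequence and the Euler-pairing identities.
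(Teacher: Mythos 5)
Your proposal is correct and follows essentially the same route as the paper: a Hecke extension $0 \to V \to H \to \Oc_x \to 0$ of a generic $V$ of invariant $m\alphaw - \deltaw$ supplied by \cref{lemma:construction of V}, with the extension class chosen in $K_x = \ker(g_0|_x)$ by a dimension count that hinges on the joint surjectivity of $(g_0,g)$. The only differences are presentational — you compute the connecting map $\partial_W(g)=g|_x(v)$ explicitly and run the dimension count on an incidence variety in $\PP\Hom(V,E_i)\times\PP K_x$, where the paper phrases the same count via Schubert varieties in $\mathrm{Gr}(K_x,r_i)$ and Bertini--Kleiman.
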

\begin{proof}
Let $\betab_i$ be the numerical invariants of $E_i$ and $r_i$ be the rank of $E_i$. Since up to a constant, the $\alphab $-slope of any vector bundle $G$ with invariant $\gammab$ is $\langle \alphab ,\gammab \rangle /\rk(\gammab)$ and we assumed $\langle \alphab ,\betab \rangle = 0$, we conclude $\langle \alphab ,\betab_i \rangle = 0$ for all $i$.

We start by applying \cref{lemma:construction of V} to the $\alphab $-stable vector bundles $E_0:= F_1$ and $E_1,\dots, E_\ell$ to deduce that a generic vector bundle $V$ with invariant $m\alphaw - \deltaw$ has the properties stated in this lemma. Indeed as $\deltaw$ is the invariant of a skyscraper sheaf supported at a non-stacky point, we have $\langle \deltab ,\betab_i \rangle = - \rank(\betab_i) = - r_i < 0$ which yields the necessary inequality to apply \cref{lemma:construction of V}.

Fix a non-zero surjection $\phi \colon V \twoheadrightarrow E_0$ and let $K$ denote the kernel, so that
\[ 0 \longrightarrow K \longrightarrow V \overset{\phi}{\longrightarrow} E_0 \longrightarrow 0
\] is an exact sequence. The idea of the proof is to enlarge $V$ to a vector bundle $H$ which is an extension of a skyscraper sheaf $\Oc_x$ at a non-stacky point by $V$ such that $\phi$ extends to a map $H \to E_0$, but for $i> 0$ no non-zero map $\psi_i \colon V \rightarrow E_i$ extends to a map $H \to E_i$. This would prove
\[ \Hom(H,E_0) \neq 0 \quad \mathrm{but} \quad \Hom(H,E_i) = 0 \:  \:\mathrm{for } \: i = 1,\dots, \ell \]
from which we deduce the claim \eqref{eq:separating polystables}. By Serre duality thus such an extension $H$ is determined by a line $L$ in the fibre $V_x$. We will find such a line $L$ by considering Grassmannians of quotients of the fibre $K_x$ of the kernel of $\phi$ at $x$.

More precisely, for $i = 1,\dots, \ell$, let $\mathrm{Gr}(K_x,r_i)$ be the Grassmannian of $r_i$-dimensional quotients of $K_x$, where $x$ is a fixed non-stacky point of $\Cc$. For each $i = 1,\dots, \ell$, we next construct a morphism
\[ q_i \colon  \mathbb{P}(\Hom(V,E_i)) \rightarrow \mathrm{Gr}(K_x,r_i). \]
For every non-zero morphism $\psi_i \colon V \rightarrow E_i$, we construct the commutative diagram
\[ \xymatrix{
0 \ar[r]  & K \ar@{..>}[d]^{\psi_i'} \ar[r]^\iota & V\ar[r]^\phi \ar[d]^{(\phi, \psi_i)} & E_0\ar[r] \ar[d]^= & 0\\
0 \ar[r] & E_i  \ar[r] & E_0 \oplus E_1\ar[r]  & E_0 \ar[r] & 0
}\]
where, by \cref{lemma:construction of V} the central map $(\phi, \psi_i)$ is surjective and so it induces the surjection $\psi'_i \colon K \to E_i$ by the snake lemma. In fact, we have an exact sequence
\[ 0 \longrightarrow K_i \longrightarrow K \longrightarrow E_i \longrightarrow 0, \]
where $K_i$ denotes the kernel of $(\phi,\psi_i) \colon V \rightarrow E_0 \oplus E_i$.  The map $q_i$ is thus defined by restricting $\psi'_i$ to $x$, that is
\[ q_i (\psi_i) \coloneqq  (\psi_i')_x \colon K_x \to (E_i)_x.
\]
The image of $q_i$ has dimension at most $\dim \Hom(V,E_i) - 1$. Since $\Ext^1(V,E_i) = 0$ by assumption and $\langle \alphab ,\betab_i \rangle =0$, we have
\[ \dim \Hom(V,E_i) = \langle m \alphab - \deltab,\betab_i \rangle = - \langle \deltab, \betab_i \rangle = r_i. \]
For a line $L$ in $K_x$, we define the Schubert variety $S_{L,i}$ by
\[S_{L,i} \coloneqq \{ f \colon K_x \twoheadrightarrow W \text{ such that } f(L) =0 \} \subset \mathrm{Gr}(K_x,r_i).\] 
The codimension of $S_{L,i} \subset \mathrm{Gr}(K_x,r_i)$ is $r_i$, whereas $\dim \mathrm{Im}(q_i) \leq r_i - 1$. Thus by the  Bertini--Kleiman Theorem, for a general line $L \subset K_x$, the image $\mathrm{Im}(q_i)$ is disjoint from the Schubert variety $S_{L,i}$ for $ i = 1,\dots, \ell$. Let us fix such a general line $L \subset K_x \subset V_x$. Observe that
\[ K_x = \Hom(K,\Oc_x)^\vee \cong \Ext^1(\Oc_x, K) \qquad \text{ and similarly} \qquad   V_x = \Hom(V,\Oc_x)^\vee \cong \Ext^1(\Oc_x, V)
\] where the isomorphisms are given by Serre duality. Thus, the line $L_x$ naturally defines two non-trivial extensions which fit in the diagram
\begin{equation} \label{eq:diagExt}  \xymatrix{
0 \ar[r]  & K \ar@{_{(}->}[d]^{\iota} \ar[r]  & G\ar[r] \ar@{_(->}[d]^{a} & \Oc_x \ar[r] \ar@{=}[d] & 0\\
0 \ar[r] & V  \ar[r]  & H \ar[r]  & \Oc_x \ar[r] & 0
}.\end{equation}
We note that $H$ is a vector bundle, as this extension is non-split and $H$ is an extension of the torsion sheaf $\mathcal{O}_x$ by $V$. We first of all see from \eqref{eq:diagExt} that the cokernels of the maps $\iota$ and $a$ are naturally isomorphic. Since by definition $K = \ker(\phi)$, we deduce that $\coker(\iota)=E_0$, and thus the map $\phi$ extends to $H$.

It remains to show that none of the maps $\psi_i$ extend to $H$.
Since $L$ avoids $S_{L,i}$, then the image of $L$ in $(E_i)_x$ via the map $\psi_i$ is non-zero, and thus, as done earlier, it defines a non-trivial extension $H_i$ that fits in the diagram:
\begin{equation*}\xymatrix{
0 \ar[r]  & V \ar@{->>}[d]^{\psi_i} \ar[r]  & H\ar[r] \ar@{->>}[d] & \Oc_x \ar[r] \ar@{=}[d] & 0\\
0 \ar[r] & E_i  \ar[r]  & H_i \ar[r]  & \Oc_x \ar[r] & 0
}.\end{equation*} If the map $\psi_i$ extended to $H$, then it would follow that the extension given by $H_i$ were split, which is a contradiction.
\end{proof}

\section{Ampleness of the determinantal line bundle and projectivity}
\label{sec:proj-of-gms}

We finally combine the previous two sections to prove \cref{ThmA} and \cref{ThmB}.

Throughout this section, we fix numerical invariants $\alphab$ and $\betab$ satisfying $\langle \alphab, \betab \rangle = 0$ and assume $\alphab$ is a generating numerical invariant. We recall that this vanishing of the Euler pairing can be arranged by applying \cref{lemma how to modify to orthogonal alpha}.  In order to uniquely (up to isomorphism) specify a determinantal line bundle we need to fix a line bundle $N$ on $C$ such that $\deg \pi_*N = \deg \pi_* \alphab$; then the determinantal line bundle $\mathcal{L}_V$ on $\Mstackss$ associated to a vector bundle $V$ with algebraic invariant $ \alphaw = (\alphab, N)$ is, up to isomorphism, independent of $V$ by \cref{proposition:determinantal line bundles dependence on V}. We denote the restriction of this determinantal line bundle to $\Mstackss$ by $\mathcal{L}_\alphaw$ (to emphasise that it depends on the algebraic class $\alphaw$ rather than the numerical class $\alphab$). Note that if $V$ is a vector bundle with algebraic invariant $m \alphaw$, then $\mathcal{L}_V \cong \mathcal{L}_\alphaw^{\otimes m}$.

\subsection{Global generation}

The following result gives a slight refinement of \cref{ThmA}.

\begin{theorem}\label{theorem:semiample}
Let $k$ be an arbitrary field, and assume $\langle \alphaw, \betab \rangle = 0$ with $\alphaw$ a generating algebraic invariant. Then the line bundle $\mathcal{L}_\alphaw$ on the stack $\Mstackss$ is semiample. More precisely, for every positive integer $m$ with
\[ m > (g_{\Cc} - 1)(\rank \betab)^2, \]
$\mathcal{L}_{\alphaw}^{\otimes m}$ is basepoint-free. If additionally $k$ has characteristic zero, then the line bundle $\mathcal{L}_{\alphaw}$ descends to a semiample line bundle $L_{\alphaw}$ on the good moduli space $\Mspacess$.
\end{theorem}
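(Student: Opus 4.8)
The plan is to deduce everything from \cref{corollary:hom-vanishing for semistables} applied with $\etaw = 0$, using the explicit threshold of \cref{prop:eff bdd semiample} (see also \cref{rmk: eff bdd semiample}), together with the section $\sigma_V$ of \eqref{prop:sigma} and the identification $\mathcal{L}_V \cong \mathcal{L}_\alphaw^{\otimes m}$ whenever $V$ has algebraic invariant $m\alphaw$.

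First I would fix a positive integer $m$ with $m > (g_{\Cc} - 1)(\rank \betab)^2$; since $\kappa_\betab = \max((g_{\Cc} - 1)(\rank \betab)^2, 0)$, this is precisely the condition $m > \kappa_\betab$ (the two bounds agree for positive integers $m$ regardless of the sign of $(g_{\Cc} - 1)(\rank \betab)^2$). To prove that $\mathcal{L}_\alphaw^{\otimes m}$ is basepoint-free it suffices to show that the evaluation map $H^0(\Mstackss, \mathcal{L}_\alphaw^{\otimes m}) \otimes_k \Oc_{\Mstackss} \to \mathcal{L}_\alphaw^{\otimes m}$ is surjective, and as $\Mstackss$ is of finite type over $k$ (so that $H^0$ commutes with the flat base change $k \to k'$) this may be checked on geometric points. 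So I would take $E \in \Mstackss(k')$ with $k'$ algebraically closed and apply \cref{corollary:hom-vanishing for semistables} with $\etaw = 0$ (its hypothesis $\langle \etab, E_i \rangle \geq 0$ being then automatically satisfied, while $\alphaw$ is generating hence positive and $\betab$ is positive) to produce a vector bundle $V$ over $k'$ with algebraic invariant $m\alphaw$ and $\Hom(V, E) = 0$. Since $\langle m\alphab, \betab \rangle = m \langle \alphab, \betab \rangle = 0$ and this pairing equals $\dim \Hom(V, E) - \dim \Ext^1(V, E)$, we also get $\Ext^1(V, E) = 0$, so by \eqref{prop:sigma} the section $\sigma_V$ of $\mathcal{L}_V \cong \mathcal{L}_\alphaw^{\otimes m}$ is non-zero at $E$. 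This gives basepoint-freeness of $\mathcal{L}_\alphaw^{\otimes m}$, hence semiampleness of $\mathcal{L}_\alphaw$.

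For the descent statement I would assume $\car k = 0$, so that \cref{cor:langton} provides the good moduli space $\phi \colon \Mstackss \to \Mspacess$, whose closed points have reductive (hence, in characteristic zero, linearly reductive) stabilisers. The key point is that $\Aut(E)$ acts trivially on the fibre $\mathcal{L}_\alphaw|_E$ for every geometric point $E$ of $\Mstackss$: choosing two consecutive integers $m, m+1$, both exceeding $(g_{\Cc} - 1)(\rank \betab)^2$, the previous paragraph produces vector bundles $V, V'$ of algebraic invariants $m\alphaw$ and $(m+1)\alphaw$ with $\sigma_V(E) \neq 0$ and $\sigma_{V'}(E) \neq 0$; since $\sigma_V$ is a global section of a line bundle on the stack, $\sigma_V|_E$ is a non-zero $\Aut(E)$-invariant vector of $\mathcal{L}_\alphaw^{\otimes m}|_E$, so the character $\chi$ through which $\Aut(E)$ acts on $\mathcal{L}_\alphaw|_E$ satisfies $\chi^m = 1$, and likewise $\chi^{m+1} = 1$, whence $\chi = 1$. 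The descent criterion for good moduli space morphisms (\cite[Theorem 10.3]{alper:2013}; cf.\ \cite{alper-etal:2022}) then yields a line bundle $L_\alphaw$ on $\Mspacess$ with $\phi^* L_\alphaw \cong \mathcal{L}_\alphaw$; finally $\phi_* \Oc_{\Mstackss} = \Oc_{\Mspacess}$ identifies $H^0(\Mspacess, L_\alphaw^{\otimes m})$ with $H^0(\Mstackss, \mathcal{L}_\alphaw^{\otimes m})$ compatibly with evaluation, and surjectivity of $\phi$ then transports basepoint-freeness of $\mathcal{L}_\alphaw^{\otimes m}$ to $L_\alphaw^{\otimes m}$, so $L_\alphaw$ is semiample.

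I expect the main obstacle to be organisational rather than mathematical: the genuinely substantial content, namely the codimension estimate guaranteeing a Hom-vanishing $V$, is already \cref{proposition:Hom-vanishing}, so what remains is reducing basepoint-freeness and the character computation to geometric points, checking that the stated numerical bound coincides with $\kappa_\betab$ for all positive integers $m$, and correctly invoking the descent criterion along $\phi$.
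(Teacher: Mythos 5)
Your proof is correct, and the basepoint-freeness part is essentially identical to the paper's: fix $m$ above the effective bound, use \cref{corollary:hom-vanishing for semistables} (with $\etaw = 0$ and the bound from \cref{prop:eff bdd semiample} and \cref{rmk: eff bdd semiample}) to produce for each semistable $E$ a bundle $V$ of invariant $m\alphaw$ with $\Hom(V,E)=\Ext^1(V,E)=0$, and conclude via \eqref{prop:sigma}. The paper phrases the reduction as base-changing the whole situation to $\bar{k}$ and then covering the quasi-compact stack by finitely many non-vanishing loci, whereas you check surjectivity of the evaluation map on geometric points; these are interchangeable.

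The descent step is where you genuinely diverge. The paper takes finitely many generating sections of $\mathcal{L}_\alphaw^{\otimes m}$, obtains a morphism $\phi_m\colon \Mstackss \to \PP^n$, factors it through the good moduli space using that $f\colon \Mstackss \to \Mspacess$ is initial among maps to schemes, and sets $L_\alphaw = L_{m+1}\otimes L_m^{-1}$; no descent criterion for line bundles is invoked. You instead verify the hypothesis of Alper's descent theorem directly: at each (closed) point the non-vanishing invariant vectors $\sigma_V|_E$ and $\sigma_{V'}|_E$ force the character of $\Aut(E)$ on $\mathcal{L}_\alphaw|_E$ to satisfy $\chi^m=\chi^{m+1}=1$, hence $\chi=1$. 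Both arguments ultimately rest on the same "two consecutive powers descend" observation, just implemented at different levels (line bundles on the space versus characters of stabilisers). Your route has the minor advantage of not needing finitely many sections at this stage and of making the descent obstruction explicit; the paper's route avoids quoting the descent criterion and gets the maps $\varphi_m$ to projective space for free, which it reuses in the ampleness proof. Your final transfer of basepoint-freeness to $L_\alphaw$ via $\phi_*\Oc_{\Mstackss}=\Oc_{\Mspacess}$ and surjectivity of $\phi$ is sound.
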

\begin{proof}
We can assume without loss of generality that $k$ is algebraically closed, as it suffices to know that the base change to an algebraic closure is semiample (see \cite[Exercise 19.2.I]{rising-sea}).

Fix a positive natural number $m$ such that $m > (g_{\Cc} - 1)(\rank \betab)^2$; this gives an effective bound for Hom-vanishing by \cref{prop:eff bdd semiample} and \cref{rmk: eff bdd semiample}. For a point of $\Mstackss$ corresponding to an $\alphab$-semistable vector bundle $F$ on $\Cc$ with numerical invariants $\betab$, we know that a general bundle $V$ with algebraic invariant $m \alphaw$  satisfies $\Hom(V,F) = 0$ by \cref{corollary:hom-vanishing for semistables}. In particular, we can find such a vector bundle $V$ so that the associated section $\sigma_V$ of $\mathcal{L}_V \cong \mathcal{L}_\alphaw^{\otimes m}$ is nonzero at this point by \eqref{prop:sigma}. Since $\Mstackss$ is quasi-compact, the non-vanishing loci of finitely many such sections cover $\Mstackss$ and so $\mathcal{L}_\alphaw^{\otimes m}$ is basepoint-free.

For the final claim, we let $\sigma_0,\dots, \sigma_n$ be global sections that generate $\mathcal{L}_\alphaw^{\otimes m}$ and thus induce a morphism $\phi_m \colon \Mstackss \rightarrow \PP^n$ such that $\mathcal{L}_\alphaw^{\otimes m} \cong \phi^* \Oc_{\PP^n}(1)$. Since the good moduli space map $f \colon \Mstackss \rightarrow \Mspacess$ is initial amongst morphisms to schemes, $\phi_m$ must factor via $f$ and so there is an induced morphism $\varphi_m \colon \Mspacess \rightarrow \PP^n$ such that $L_m:=\varphi_m^* \Oc_{\PP^n}(1)$ pulls back along $f$ to $\mathcal{L}_\alphaw^{\otimes m}$. Then ${L}_\alphaw:=L_{m+1} \otimes L_m^{-1}$ pulls back along $f$ to $\mathcal{L}_\alphaw$.
\end{proof}

\subsection{Ampleness and projectivity}

The following theorem together with \cref{cor:langton} proves \cref{ThmB}. 

\begin{theorem}\label{theorem: det is ample}
Let $k$ be a field of characteristic zero and assume $\langle \alphaw, \betab \rangle = 0$ where $\alphaw$ is a generating algebraic invariant. Then the line bundle $L_\alphaw$ on $\Mspacess$ is ample and $\Mspacess$ is projective.
\end{theorem}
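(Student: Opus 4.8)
The strategy is the standard one for proving ampleness on a proper algebraic space: combine semiampleness with a separation statement, then invoke a Nakai–Moishezon or Zariski–Riemann type ampleness criterion, or more directly apply the criterion that a semiample line bundle on a proper scheme/algebraic space is ample as soon as the associated morphism is finite. First I would recall from \cref{theorem:semiample} that $\Lc_\alphaw^{\otimes m}$ is basepoint-free on $\Mstackss$ for $m > (g_\Cc - 1)(\rank\betab)^2$, and that this descends to a semiample line bundle $L_\alphaw$ on the proper algebraic space $\Mspacess$ (\cref{cor:langton}). It then suffices to show that the morphism $\varphi_N \colon \Mspacess \to \PP^n$ associated to a sufficiently high power $L_\alphaw^{\otimes N}$ is \emph{finite} — equivalently quasi-finite, since it is already proper. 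Because $\Mspacess$ is the good moduli space of $\Mstackss$ and $k$ has characteristic zero, the $k$-points of $\Mspacess$ are exactly the $\alphab$-polystable bundles of invariant $\betab$ (the closed points of $\Mstackss$), so quasi-finiteness amounts to the statement: for any two non-isomorphic $\alphab$-polystable bundles $E, F$ with invariant $\betab$, some power of $L_\alphaw$ has a section vanishing at $[E]$ but not at $[F]$ — equivalently, there is a vector bundle $H$ with algebraic invariant $m\alphaw$ and $\Hom(H,E)=0$ but $\Hom(H,F)\neq 0$.

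The key geometric input is exactly \cref{prop:separating polystables}: writing $E = \bigoplus_i E_i$ and $F = \bigoplus_j F_j$ as sums of $\alphab$-stable bundles, since $E \not\cong F$ there is some stable summand $F_1$ of $F$ not isomorphic to any $E_i$, and then for $m > \kappa_{2\betab}^+$ there is $H$ with algebraic invariant $m\alphaw$ satisfying $\Hom(H,E) = 0$ and $\Hom(H,F) \neq 0$; by the discussion of determinantal line bundles in \cref{sec:det-line-bdles} and \eqref{prop:sigma}, this yields a section $\sigma_H$ of $\Lc_\alphaw^{\otimes m}$ (hence, after adjusting by a fixed basepoint-free power to match the descended $L_\alphaw$) with $\sigma_H([E]) \neq 0$ and $\sigma_H([F]) = 0$ — I must be a little careful here, since $\Hom(H,E)=0$ means $\sigma_H$ is nonzero at $[E]$, so in fact I get a section \emph{vanishing} at $[F]$ but not at $[E]$, which is what separation of points requires (the roles of $E$ and $F$ are symmetric up to swapping, so this is harmless). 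Thus for any point $[F] \in \Mspacess$, the fiber of $\varphi_N$ through $[F]$ (for $N$ a fixed large power) contains no other closed point, so $\varphi_N$ is injective on closed points; since it is also proper and, by a similar argument on tangent/deformation directions, unramified — or more simply, since a proper morphism that is injective on closed points of a finite-type scheme over an algebraically closed field is finite after restricting to the reduced image — we conclude $\varphi_N$ is finite. Hence $L_\alphaw^{\otimes N} = \varphi_N^*\Oc_{\PP^n}(1)$ is ample, so $L_\alphaw$ is ample and $\Mspacess$ is projective.

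The main obstacle I anticipate is the passage from \emph{separation of closed points} to \emph{finiteness} of $\varphi_N$, and in particular dealing with the difference between separating $k$-points and separating scheme-theoretic points or infinitesimal directions. One clean way around this: rather than proving $\varphi_N$ is unramified directly, I would argue that $\varphi_N$ is quasi-finite (injective on closed points plus properness gives finite fibers over closed points, hence quasi-finite), and a proper quasi-finite morphism of algebraic spaces is finite (\cite[Tag 02OG]{stacks} / standard); then a finite morphism pulling back $\Oc(1)$ makes $L_\alphaw^{\otimes N}$ ample by the finite-pullback criterion for ampleness (\cite[Tag 0B5V]{stacks}). This avoids any tangent-space computation entirely. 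The one subtlety is ensuring the separating sections $\sigma_H$ descend from $\Mstackss$ to $\Mspacess$: since $\sigma_H$ is $\Aut$-invariant up to scalar — because $\Hom(H,-)$ is functorial and the determinant construction is canonical — it descends to a section of the appropriate power of $L_\alphaw$ on $\Mspacess$, which is where we apply the semiampleness of \cref{theorem:semiample} to replace $\Lc_\alphaw$ by its descended power. With this in hand the projectivity of $\Mspacess$ follows, since it is a proper scheme (being finite over $\PP^n$) with an ample line bundle.
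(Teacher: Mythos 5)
Your overall strategy (semiampleness plus a finiteness statement for the induced map to $\PP^n$) is the same as the paper's, but there is a genuine gap in the separation step. You claim that for two non-isomorphic $\alphab$-polystable bundles $E=\bigoplus_i E_i$ and $F=\bigoplus_j F_j$ with the same invariant $\betab$, some stable summand of $F$ is not isomorphic to any $E_i$. This is false. Take two non-isomorphic $\alphab$-stable bundles $A\not\cong B$ with the same class in $\Knum(\Cc)$ (e.g.\ two non-isomorphic line bundles of the same degree and multiplicities, of the correct slope); then $E=A\oplus A\oplus B$ and $F=A\oplus B\oplus B$ are non-isomorphic polystable bundles with invariant $\betab=3[A]$, yet every stable summand of each appears in the other, so \cref{prop:separating polystables} gives you nothing for this pair. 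Consequently the map $\varphi_N$ is genuinely \emph{not} injective on closed points in general, and your route ``injective on closed points $\Rightarrow$ quasi-finite $\Rightarrow$ finite'' breaks at the first step. (There is also a secondary issue you gloss over: even when a separating section of $\Lc_{\alphaw}^{\otimes m}$ exists, two points are only forced apart by $\varphi_N$ if that section lies in the linear system actually used to define $\varphi_N$.)

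The paper avoids both problems by arguing that no curve is contracted rather than that points are separated: if $\varphi$ had a positive-dimensional fibre, one could find a non-constant map $\gamma\colon X\to\Mspacess$ from a proper curve with $\varphi\circ\gamma$ constant, so that $\gamma^*L_{\alphaw}^{\otimes m}$ is trivial and all its sections are constant. Since $\gamma(X)(k)$ is infinite and, for a fixed polystable bundle, only finitely many polystable bundles with invariant $\betab$ have all their stable summands drawn from its summands, one finds \emph{two} points $E,F$ in the image with a summand of $F$ absent from $E$; now \cref{prop:separating polystables} does apply, the resulting section descends (as $\mathcal{L}_{\alphaw}^{\otimes m}=f^*L_{\alphaw}^{\otimes m}$ and $f_*\Oc_{\Mstackss}=\Oc_{\Mspacess}$) and pulls back to a non-constant section on $X$, a contradiction. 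Your argument could be repaired in the same spirit by showing directly that each fibre of $\varphi$ is finite (all points of a common fibre must share their sets of stable summands, and there are only finitely many such polystable bundles), but as written the injectivity claim, and the lemma it rests on, are incorrect.
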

\begin{proof}
Since $\Mspacess$ is proper, by the cohomological criterion for ampleness \cite[Tag \href{https://stacks.math.columbia.edu/tag/0D38}{0D38}]{stacks-project} and flat base change, we can assume without loss of generality that $k$ is algebraically closed.

As in \cref{theorem:semiample}, we know that a sufficiently large power $m$ of the determinantal line bundle on $\Mstackss$ is globally generated by finitely many sections which determine a morphism $\phi \colon  \Mstackss \rightarrow \PP^n$ that factors via the good moduli space map $f \colon \Mstackss  \rightarrow \Mspacess$ and a morphism $\varphi \colon \Mspacess \rightarrow \PP^n$ as in \cref{theorem:semiample}. Since $\Mspacess$ is proper, $\varphi$ is a proper morphism and to conclude the proof it is then enough to show that $\varphi$ is finite.

To show that $\varphi$ is finite, it suffices to show that the fibres of $\varphi$ are finite by \cite[Tag \href{https://stacks.math.columbia.edu/tag/0A4X}{0A4X}]{stacks-project}. Since $\Mspacess$ is of finite type, it is enough to check that fibres over $\bar{k}$-points are finite, so we will assume $k= \bar{k}$ (see \cite[Remark 12.16]{gortz-wedhorn}). We will show finiteness by supposing, for a contradiction, that there is a smooth proper connected curve $X$ and a non-constant map $\gamma \colon X \to \Mspacess$
  such that $\varphi \circ \gamma \colon X \to \PP^n$ is constant, so that any section of any power of $\gamma^* L_{\alphaw}$ is constant.  

The $k$-points of the good moduli space $\Mspacess$ correspond to the closed points of the stack $\Mstackss$, which are precisely the $\alphab$-polystable vector bundles on $\Cc$ with invariant $\betab$. For a given polystable bundle, there are only finitely many polystable bundles with the same invariants and isomorphic stable summands. Since the image of the non-constant curve $\gamma(X)(k)$ in $\Mspacess(k)$ is infinite, there must be two points corresponding to polystable bundles $E$ and $F$ such that one of the stable summands of $F$ does not appear in $E$. Then by \cref{prop:separating polystables} for every $m > \kappa_{2\betab}^+$ there exists a vector bundle $H$ with algebraic invariants $m \alphaw$ such that
\[ \Hom(H,E) = 0 \quad \text{and} \quad \Hom(H,F) \neq  0.\]
The vector bundle $H$ determines a section $\sigma_H$ of $\mathcal{L}_{\alphaw}^{\otimes m}$ that separates these points: $\sigma_H(E) \neq 0$ and $\sigma_H(F) = 0$.
Since $\mathcal{L}_{\alphaw}^{\otimes m}  = \phi^* \Oc_{\PP^n}(1) = f^* \varphi^* \Oc_{\PP^n}(1)$, we can write $\sigma_H = f^*\sigma$ for a section $\sigma$ of $L_{\alphaw}^{\otimes m}$. Then $\gamma^*\sigma$ is a non-constant section of $\gamma^* L_{\alphaw}^{\otimes m}$ giving the desired contradiction.

Since $\varphi \colon \Mspacess \rightarrow \PP^n$ is a finite morphism of proper schemes, we can conclude that the ample line bundle $\Oc_{\PP^n}(1)$ pulls back to an ample line bundle, which is a power of $L_\alphaw$, and thus $L_\alphaw$ is ample and $\Mspacess$ is projective.
\end{proof}

\begin{corollary}\label{cor:eff bdd for finite map to Pn} For every positive integer $m$ satisfying the inequality
\[m > \kappa_{2\betab}^+ = 4 \left(g_{\Cc} - 1 + \frac{1}{2\rank\betab}\right)(\rank \betab)^2,\] the line bundle $L_{\alphaw}^{\otimes m}$ induces a finite morphism from $\Mspacess$ to a projective space.
\end{corollary}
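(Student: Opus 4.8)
The plan is to read off the effective bound from the proofs of \cref{theorem:semiample} and \cref{theorem: det is ample}, keeping track of exactly which inequality on $m$ each step requires.

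First I would record that the hypothesis $m > \kappa_{2\betab}^+$ already forces $m > (g_\Cc - 1)(\rank\betab)^2$: this is the elementary comparison $\kappa_{2\betab}^+ \geq (g_\Cc - 1)(\rank\betab)^2$, which holds since for $g_\Cc \geq 1$ one has $\kappa_{2\betab}^+ = 4\bigl(g_\Cc - 1 + \tfrac{1}{2\rank\betab}\bigr)(\rank\betab)^2 \geq 4(g_\Cc-1)(\rank\betab)^2$, while for $g_\Cc < 1$ the right-hand side is negative and $\kappa_{2\betab}^+ \geq 0$. Hence \cref{theorem:semiample} applies: $\mathcal{L}_\alphaw^{\otimes m}$ is basepoint-free on $\Mstackss$, and since $\mathcal{L}_\alphaw$ descends to $L_\alphaw$ and the good moduli space map $f \colon \Mstackss \to \Mspacess$ satisfies $f_*\Oc_{\Mstackss} = \Oc_{\Mspacess}$, the global sections of $\mathcal{L}_\alphaw^{\otimes m}$ are identified via $f^*$ with those of $L_\alphaw^{\otimes m}$; as $f$ is surjective, $L_\alphaw^{\otimes m}$ is itself globally generated and defines a morphism $\varphi_m \colon \Mspacess \to \PP^n$ with $\varphi_m^*\Oc_{\PP^n}(1) \cong L_\alphaw^{\otimes m}$.

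Next I would run the finiteness argument of \cref{theorem: det is ample} verbatim, checking that the one place where a lower bound on $m$ intervenes is the invocation of \cref{prop:separating polystables}, which needs precisely $m > \kappa_{2\betab}^+$. Concretely: $\varphi_m$ is proper since $\Mspacess$ is proper by \cref{cor:langton}, so it suffices to show its fibres are finite, and one may reduce to $k = \bar k$ and to closed points. If some fibre were infinite there would be a non-constant $\gamma \colon X \to \Mspacess$ from a smooth proper connected curve with $\varphi_m \circ \gamma$ constant, so $\gamma^* L_\alphaw^{\otimes m} = \gamma^*\varphi_m^*\Oc_{\PP^n}(1)$ is trivial and every section of it is constant; but $\gamma(X)(k)$ is an infinite set of $\alphab$-polystable bundles with invariant $\betab$, so it contains $E$ and $F$ with a stable summand of $F$ absent from $E$, and \cref{prop:separating polystables} (in the range $m > \kappa_{2\betab}^+$) gives a bundle $H$ of algebraic invariant $m\alphaw$ with $\Hom(H,E) = 0 \neq \Hom(H,F)$; the descended section $\sigma$ of $L_\alphaw^{\otimes m}$ then satisfies $\sigma(E) \neq 0 = \sigma(F)$, so $\gamma^*\sigma$ is non-constant, a contradiction. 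Therefore $\varphi_m$ is finite.

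I do not expect any genuine obstacle: the content is already contained in \cref{prop:separating polystables} and \cref{theorem: det is ample}, and the only thing requiring care is the bookkeeping of the bound --- in particular that the factor of $2$ in $\kappa_{2\betab}^+$ is exactly the one produced by applying \cref{proposition:Hom-vanishing} to the rank-$2\rank\betab$ bundle $E_0 \oplus E_i$ inside the proof of \cref{prop:separating polystables}, as already noted in \cref{rmk effective bound for finite map to proj space}, and that this threshold dominates the weaker basepoint-freeness threshold of \cref{theorem:semiample}.
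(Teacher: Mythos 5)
Your proposal is correct and follows exactly the route the paper intends: its own proof of \cref{cor:eff bdd for finite map to Pn} is a one-line pointer to \cref{prop:separating polystables} and \cref{rmk effective bound for finite map to proj space}, and what you have written is just the careful unwinding of that pointer (checking that $\kappa_{2\betab}^+$ dominates the basepoint-freeness threshold of \cref{theorem:semiample}, and rerunning the finiteness argument of \cref{theorem: det is ample} with this fixed $m$). No gaps; the bookkeeping of the factor $2$ and the comparison of thresholds are both right.
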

\begin{proof}
This follows from the proof of \cref{prop:separating polystables} combined with Remark \ref{rmk effective bound for finite map to proj space}.
\end{proof}

\appendix

% ---
\section{Grothendieck duality for determinantal line bundles}
% ---

In this short appendix, we give a strengthening of Grothendieck duality for stacks (\cref{theorem: Grothendieck duality} below strengthens \cite[Corollary 4.15]{hall-rydh:2017:perfect-complexes}) and then we use this to prove \cref{proposition:determinantal line bundles dependence on V}.

% -
\subsection{Grothendieck duality for algebraic stacks}
\label{appendix: GD}
% -

For the lack of unified exposition, we first summarise known properties of derived categories of algebraic stacks in order to prove a strengthening of Grothendieck duality for stacks.

Given a morphism of algebraic stacks $f \colon \Xc \to \Yc$, there are two ways to construct derived pullback. Let $\Dqc(\Xc)$ be the unbounded derived category of $\Oc_\Xc$-modules with quasi-coherent cohomology.  
In what follows, we will write $\Lb f^*$ to denote the functor $\Lb f^*_{\mathrm{qc}}$ in \cite[\S1.3]{hall-rydh:2017:perfect-complexes} (see also \cite{olsson:2007} and \cite{laszlo-olsson:2008}).
This functor automatically admits a right adjoint $\Rb f_*$ (denoted $\Rb (f_{\mathrm{qc}})_*$ in \cite[\S1.3]{hall-rydh:2017:perfect-complexes}).
The other approach to pullback may give as its adjoint a different derived pushforward functor, which nevertheless coincides with $\Rb f_*$ when restricted to the bounded below derived category \cite[Lemma 1.2(2)]{hall-rydh:2017:perfect-complexes}.
For any $K, M \in \Dqc(\Xc)$, there is the derived tensor product $K \otimes^\Lb P$ over $\Oc_\Xc$, as well as its right adjoint $\localRHom(K,M) \in \Dqc (\Xc)$, which in \cite[\S1.2]{hall-rydh:2017:perfect-complexes} is denoted by $\localRHom^{\mathrm{qc}}(K,M)$.
For any $K \in \Dqc (\Xc)$, we denote by $K^\vee = \localRHom(K, \Oc_\Xc)$ its derived dual. 

\begin{lemma}
\label{lemma: basic dualities}
	Let $\Xc$ be an algebraic stack. Fix $K, M, N \in \Dqc (\Xc)$ and a perfect complex  $P \in \Dqc (\Xc)$ (e.g.\ $P$ is a vector bundle). Then we have the following natural isomorphisms.
	\begin{enumerate}[label=\emph{(\roman*)}]
		\item \label{item: tensor-Hom adjunction}
		$\localRHom(K \otimes^\Lb M, N) \cong
		\localRHom(K, \localRHom(M, N))$.
		\item \label{item: switching sides for perfect complexes}
		$\localRHom(K, P \otimes^\Lb N)
		\cong \localRHom (K \otimes^\Lb (P^\vee) , N)$.
	\end{enumerate}
\end{lemma}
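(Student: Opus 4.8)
The plan is to deduce part (i) formally from the adjunction that \emph{defines} $\localRHom$ on $\Dqc$ together with associativity of the derived tensor product, and then to reduce part (ii) to part (i) once one knows that, for a perfect complex $P$, tensoring with $P$ agrees with $\localRHom(P^\vee,\blank)$. None of this uses the geometry of $\Xc$; the only point requiring care is a local computation on a smooth presentation, which I single out as the main obstacle below.

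For part (i): recall from \cite[\S1.2]{hall-rydh:2017:perfect-complexes} that $\localRHom(K,\blank)$ is, by construction, right adjoint to $\blank\otimes^\Lb K$ on $\Dqc(\Xc)$. Hence, for every test object $T\in\Dqc(\Xc)$, there are natural isomorphisms
\[
\Hom_{\Dqc(\Xc)}\bigl(T,\localRHom(K\otimes^\Lb M,N)\bigr)\ \cong\ \Hom_{\Dqc(\Xc)}\bigl(T\otimes^\Lb(K\otimes^\Lb M),N\bigr)
\]
and, using the adjunction twice,
\[
\Hom_{\Dqc(\Xc)}\bigl(T,\localRHom(K,\localRHom(M,N))\bigr)\ \cong\ \Hom_{\Dqc(\Xc)}\bigl((T\otimes^\Lb K)\otimes^\Lb M,N\bigr);
\]
here we use that $\Dqc$ is closed under $\otimes^\Lb$ so that $T\otimes^\Lb K\in\Dqc$. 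The associativity isomorphism $(T\otimes^\Lb K)\otimes^\Lb M\cong T\otimes^\Lb(K\otimes^\Lb M)$ identifies the two right-hand sides naturally in $T$, so the Yoneda lemma in $\Dqc(\Xc)$ yields part (i).

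For part (ii): it suffices to produce, for a perfect complex $P$, a natural isomorphism $\localRHom(P^\vee,N)\cong P\otimes^\Lb N$; granting this and applying part (i) with $M=P^\vee$ gives
\[
\localRHom(K\otimes^\Lb P^\vee,N)\ \cong\ \localRHom\bigl(K,\localRHom(P^\vee,N)\bigr)\ \cong\ \localRHom\bigl(K,P\otimes^\Lb N\bigr).
\]
Using reflexivity $(P^\vee)^\vee\cong P$ of perfect complexes, it is equivalent to show that for every perfect $Q$ the canonical evaluation map $Q^\vee\otimes^\Lb N\to\localRHom(Q,N)$ is an isomorphism (then take $Q=P^\vee$). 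This map is natural in $Q$ and $N$, is the identity for $Q=\Oc_\Xc$, and is compatible with shifts, finite direct sums, cones and direct summands in $Q$; thus the class of $Q$ for which it is an isomorphism for all $N$ is a thick triangulated subcategory.

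The main obstacle is to verify this evaluation isomorphism, which is genuinely a local statement on $\Xc$. Here I would pick a smooth surjection $u\colon X\to\Xc$ from a scheme and, in turn, an affine open cover of $X$. The functor $\Lb u^*$ is conservative on $\Dqc$ (a complex in $\Dqc(\Xc)$ vanishes iff its pullback does), commutes with $\otimes^\Lb$, and — because $Q$ is perfect — commutes with $\localRHom(Q,\blank)$ and sends $Q^\vee$ to $(\Lb u^*Q)^\vee$, with $\Lb u^*Q$ again perfect (see \cite{hall-rydh:2017:perfect-complexes}). So it suffices to treat an affine scheme, where a perfect complex is a direct summand of a bounded complex of finite free modules, hence lies in the thick subcategory generated by the structure sheaf; this reduces us to the trivial case $Q=\Oc$. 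This establishes the evaluation isomorphism and, with it, both parts of the lemma.
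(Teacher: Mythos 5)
Your proposal is correct and follows essentially the same route as the paper: part (i) is the tensor--Hom adjunction defining $\localRHom$ applied twice together with associativity of $\otimes^\Lb$ and the Yoneda Lemma, and part (ii) reduces to the natural isomorphism $P\otimes^\Lb N\cong\localRHom(P^\vee,N)$ for perfect $P$. The only difference is that the paper simply cites \cite[Lemma 4.3(2)]{hall-rydh:2017:perfect-complexes} for that evaluation isomorphism, whereas you supply a self-contained (and correct) proof of it via thickness of the class of $Q$ for which evaluation is an isomorphism and reduction along a smooth presentation to the affine case.
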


\begin{proof}
	Part \emph{\ref{item: tensor-Hom adjunction}} follows from the Yoneda Lemma, \cite[Eq. 1.4]{hall-rydh:2017:perfect-complexes} and the last equation in \cite[\S1.2]{hall-rydh:2017:perfect-complexes}.
	Part \emph{\ref{item: switching sides for perfect complexes}} follows from Part \emph{\ref{item: tensor-Hom adjunction}} and \cite[Lemma 4.3(2)]{hall-rydh:2017:perfect-complexes}.
\end{proof}

The theory explained in \cite{hall-rydh:2017:perfect-complexes} can be summarised in a slogan: derived pushforwards work best with concentrated morphisms. Although we will not define a \textbf{concentrated morphism} (the reader is referred to \cite[Definition 2.4]{hall-rydh:2017:perfect-complexes}), we will explain that the relevant morphisms that we consider are of this kind, and note that they are by definition quasi-compact.
We say that a stack $\Xc$ is \textbf{concentrated} if $\Xc \to \Spec \ZZ$ is concentrated. Further, we will contract the properties quasi-compact and quasi-separated to simply \textbf{qcqs}.

\begin{theorem}[{\cite[Theorem 1.4.2]{drinfeld-gaitsgory:2013}, \cite[Theorem C]{hall-rydh:2015}}]
\label{theorem: when qcqs stacks are concentrated}
	Let $\Xc$ be a qcqs algebraic stack over a field of characteristic $0$. If $\Xc$ has affine stabilisers, then $\Xc$ is concentrated.
\end{theorem}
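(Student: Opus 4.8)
This statement is quoted from \cite{drinfeld-gaitsgory:2013} and \cite{hall-rydh:2015}, so the plan is only to indicate the shape of the argument rather than to reprove it. Recall from \cite[Definition 2.4]{hall-rydh:2017:perfect-complexes} that concentratedness of $\Xc \to \Spec \ZZ$ amounts to $\Xc$ being qcqs --- which is hypothesised --- together with the existence of an integer $n$, stable under arbitrary base change, such that $\Rb\Gamma(\Xc, \blank)$ carries $\Dqc^{\geq 0}(\Xc)$ into $\Dqc^{\geq -n}$. So the only point is to produce this uniform cohomological bound.

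The first thing I would do is reduce to classifying stacks of the stabiliser groups. Using Noetherian approximation for qcqs algebraic stacks one may assume $\Xc$ is of finite type over $k$; then, by Noetherian induction, one stratifies $\Xc$ into locally closed substacks each of which is a gerbe over a scheme with affine band, and it suffices to bound the cohomological dimension of such a gerbe --- \'etale-locally on its coarse space, and up to residual gerbes, this is the cohomological dimension of $BG$ for $G$ an affine automorphism group of a point of $\Xc$. I would emphasise that the naive cohomological descent along a smooth atlas does \emph{not} deliver such a bound, because the associated \v{C}ech nerve is infinite; making this d\'evissage work, and keeping the bound uniform across the finitely many strata, is exactly the substantial content of \cite[Theorem C]{hall-rydh:2015} and \cite[Theorem 1.4.2]{drinfeld-gaitsgory:2013}.

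The hypothesis $\car(k) = 0$ then enters only in the comparatively soft final step of bounding the cohomological dimension of $BG$ for $G$ affine. In characteristic zero one has the Levi decomposition $G = U \rtimes L$ with $L$ reductive --- hence linearly reductive, so that $\Rb\Gamma(BL, \blank)$ is exact --- and $U$ unipotent, hence an iterated extension of copies of the additive group $\mathbb{G}_{\mathrm a}$; since $B\mathbb{G}_{\mathrm a}$ has cohomological amplitude at most $1$ in characteristic zero (the rational cohomology of a unipotent group being computed by a Chevalley--Eilenberg complex), iterating shows $\Rb\Gamma(BU, \blank)$ has amplitude at most $\dim U$. The Lyndon--Hochschild--Serre spectral sequence for $U$ inside $G$ then bounds the amplitude of $\Rb\Gamma(BG, \blank)$ by $\dim G$; since everything here is realised through flat Hochschild complexes, this bound is automatically base-change stable. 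I would note that both ingredients fail in characteristic $p$ --- already $B\mathbb{G}_{\mathrm a}$ and $B\alpha_p$ have cohomology in arbitrarily high degrees --- which is precisely why the theorem needs $\car(k) = 0$.

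The hard part is thus not the characteristic-zero group cohomology but the d\'evissage of the first step: showing that cohomological boundedness of $\Xc$ is genuinely and uniformly governed by the finitely many stabiliser groups that occur. Rather than reprove this, I would simply invoke \cite[Theorem C]{hall-rydh:2015} in the form stated.
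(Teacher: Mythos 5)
The paper does not prove this statement at all --- it is quoted verbatim from Drinfeld--Gaitsgory and Hall--Rydh --- and your proposal likewise defers to those references, so the two approaches coincide. Your accompanying sketch (d\'evissage to classifying stacks of the affine stabilisers, then bounding the cohomological amplitude of $BG$ in characteristic zero via Levi decomposition, linear reductivity of $L$, and the Chevalley--Eilenberg bound for $BU$) is an accurate summary of how the cited proofs actually go, including the correct identification of where characteristic zero is used and why it fails in characteristic $p$.
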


By {\cite[Lemma 2.5(1)]{hall-rydh:2017:perfect-complexes}}, concentrated morphisms are stable under pullback.

\begin{theorem}[{Projection formula, \cite[Corollary 4.12]{hall-rydh:2017:perfect-complexes}}]
\label{theorem: projection formula}
	Let $f \colon \Xc \to \Yc$ be a concentrated morphism of algebraic stacks. Then the following natural map is an isomorphism for all $K \in \Dqc (\Xc)$, $M \in \Dqc (\Yc)$:
	\[
	(\Rb f_* K) \otimes^\Lb M \to \Rb f_* ( K \otimes^\Lb \Lb f^* M )
	. \]
\end{theorem}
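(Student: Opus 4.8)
The plan is to regard the displayed arrow as a natural transformation in the second variable, show it is compatible with coproducts and with flat base change on the target, and then reduce to the trivial case of an affine base where $\Oc_\Yc$ generates. So fix $K \in \Dqc(\Xc)$ and write $\theta_{K,M}$ for the map in question, viewed as a natural transformation of triangulated functors in $M \in \Dqc(\Yc)$; it is the map adjoint (under $\Lb f^* \dashv \Rb f_*$) to the composite
\[
\Lb f^*\bigl((\Rb f_* K) \otimes^\Lb M\bigr) \;\simeq\; (\Lb f^* \Rb f_* K) \otimes^\Lb \Lb f^* M \;\xrightarrow{\;\varepsilon \otimes \mathrm{id}\;}\; K \otimes^\Lb \Lb f^* M,
\]
where $\varepsilon$ is the counit. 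When $M = \Oc_\Yc$, unwinding the definitions shows that $\theta_{K,\Oc_\Yc}$ is the identity of $\Rb f_* K$, so $\theta$ is an isomorphism there.

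Next I would observe that both functors $M \mapsto (\Rb f_* K) \otimes^\Lb M$ and $M \mapsto \Rb f_*(K \otimes^\Lb \Lb f^* M)$ preserve small coproducts. For the source this is because $\otimes^\Lb$ preserves coproducts in each variable; for the target, $\Lb f^*$ preserves coproducts (being a left adjoint), $\otimes^\Lb$ preserves them, and $\Rb f_*$ preserves small coproducts precisely because $f$ is concentrated — a concentrated morphism has finite cohomological dimension on quasi-coherent modules, so $\Rb f_*$ is bounded and hence commutes with coproducts; this is part of the Hall--Rydh package \cite{hall-rydh:2017:perfect-complexes}. Consequently the full subcategory of those $M$ for which $\theta_{K,M}$ is an isomorphism is a localising subcategory of $\Dqc(\Yc)$.

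Then I would reduce to the case of an affine base. Whether $\theta_{K,M}$ is an isomorphism can be checked after pullback along a smooth surjection $g \colon \Yc' \to \Yc$, since such a $g$ gives a conservative exact $\Lb g^*$ on $\Dqc$. Formation of $\Lb f^*$ and of $\otimes^\Lb$ visibly commutes with $\Lb g^*$, and formation of $\Rb f_*$ commutes with $\Lb g^*$ by the flat base change theorem applied to the base change $f' \colon \Xc \times_\Yc \Yc' \to \Yc'$, which is again concentrated because concentrated morphisms are stable under base change (as recalled above). So we may assume $\Yc = \Spec A$.

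Finally, over $\Yc = \Spec A$ one has $\Dqc(\Yc) = D(A)$, and the only localising subcategory of $D(A)$ containing $\Oc_\Yc = A$ is all of $D(A)$. By the previous two steps the localising subcategory of $M$ for which $\theta_{K,M}$ is an isomorphism contains $\Oc_\Yc$, hence equals $\Dqc(\Yc)$; as $K$ was arbitrary, the projection formula follows. The main obstacle — indeed essentially the only nonformal input — is the coproduct-preservation of $\Rb f_*$ for concentrated $f$, which is exactly where one must invoke the Hall--Rydh theory; alternatively one may first establish the formula for $M$ a perfect complex by d\'evissage through distinguished triangles (valid for any qcqs $f$, reducing to $M = \Oc_\Yc$ Zariski-locally) and then bootstrap, but the bootstrap again rests on that same coproduct statement.
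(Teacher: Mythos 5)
The paper does not prove this statement; it is imported verbatim from Hall--Rydh \cite[Corollary 4.12]{hall-rydh:2017:perfect-complexes}, so there is no internal proof to compare against. Your sketch is a correct reconstruction of the standard argument: defining $\theta_{K,M}$ by adjunction, checking it at $M=\Oc_\Yc$, observing that the class of good $M$ is localising because $\Rb f_*$ preserves small coproducts for concentrated $f$, reducing to an affine base by conservativity of pullback along a smooth cover together with flat base change, and concluding since $A$ generates $D(A)$ as a localising subcategory. You have correctly isolated the one nonformal input (coproduct preservation of $\Rb f_*$, which for concentrated morphisms combines quasi-compactness with universal finiteness of cohomological dimension), so nothing further is needed.
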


\begin{theorem}[{Tor-independent base change, \cite[Corollary 4.13]{hall-rydh:2017:perfect-complexes}}]
\label{theorem: base change}
	Consider the following Cartesian square of algebraic stacks, where $x$ is concentrated.
	% https://q.uiver.app/#q=WzAsNCxbMCwwLCJcXFhjJyJdLFsxLDAsIlxcWGMiXSxbMCwxLCJcXFljJyJdLFsxLDEsIlxcWWMiXSxbMCwxLCJxJyJdLFsxLDMsIngiXSxbMiwzLCJxIiwyXSxbMCwyLCJ4JyIsMl1d
	\[\begin{tikzcd}
		{\Xc'} & \Xc \\
		{\Yc'} & \Yc
		\arrow["{q'}", from=1-1, to=1-2]
		\arrow["x", from=1-2, to=2-2]
		\arrow["q"', from=2-1, to=2-2]
		\arrow["{x'}"', from=1-1, to=2-1]
	\end{tikzcd}\]
	If $x$ and $q$ are tor-independent (e.g.\ if one of them is flat), then the base change morphism is an isomorphism for any $K \in \Dqc (\Xc)$:
	\[
	\Lb q^* \Rb x_* (K) \to \Rb x'_* \Lb q'^* (K).
	\]
\end{theorem}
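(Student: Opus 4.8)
Since this is precisely \cite[Corollary 4.13]{hall-rydh:2017:perfect-complexes}, the plan is to recall the shape of that argument rather than to reprove it in full. First I would produce the base change natural transformation by adjunction. The Cartesian square gives a canonical identification $\Lb (x')^*\Lb q^* \cong \Lb(q\circ x')^* = \Lb(x\circ q')^* \cong \Lb (q')^*\Lb x^*$; applying $\Lb (q')^*$ to the counit $\Lb x^*\Rb x_* K \to K$ of the $(\Lb x^*,\Rb x_*)$ adjunction yields a morphism $\Lb (x')^*\Lb q^*\Rb x_* K \to \Lb (q')^* K$, and transposing it along the $(\Lb (x')^*,\Rb (x')_*)$ adjunction produces the desired map $\Lb q^*\Rb x_* K \to \Rb (x')_*\Lb (q')^* K$, natural in $K$.

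To verify that this map is an isomorphism I would reduce to a classical situation. Being an isomorphism in $\Dqc(\Yc')$ is a smooth-local condition on $\Yc'$, so one may assume that $\Yc'$, and then $\Yc$, are affine schemes; here one uses that $\Rb x_*$ commutes with the smooth — hence flat, hence tor-independent — base changes involved, which is legitimate because $x$, and therefore $x'$ by stability of concentratedness under pullback (\cite[Lemma 2.5(1)]{hall-rydh:2017:perfect-complexes}), is concentrated, so $\Rb x_*$ has bounded cohomological dimension and commutes with filtered colimits. With the base affine, one chooses a smooth affine presentation of $\Xc$ and writes $\Rb x_*$ as the totalization of the resulting cosimplicial object, which reduces the claim to tor-independent base change for qcqs schemes; this last statement is standard, and the tor-independence hypothesis on $x$ and $q$ is exactly what permits commuting the derived pullback $\Lb q^*$ past the totalization.

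The only genuine obstacle is this descent step: one must know that the formation of $\Rb x_*$ is compatible with the simplicial presentation and with the smooth base changes used, which is the technical core of the Hall--Rydh formalism and fails without a finiteness hypothesis on $x$. In our applications $\Xc$ and $\Yc$ are qcqs algebraic stacks with affine stabilisers over a field of characteristic zero, so \cref{theorem: when qcqs stacks are concentrated} guarantees that the morphisms in question are concentrated; we are therefore squarely in the range where \cite[Corollary 4.13]{hall-rydh:2017:perfect-complexes} applies, and the proof amounts to citing it.
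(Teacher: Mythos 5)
Your proposal is correct and matches the paper's treatment: the paper states this result purely as a citation of \cite[Corollary 4.13]{hall-rydh:2017:perfect-complexes} and gives no proof of its own, and your outline of the adjunction construction of the base change map and the reduction via concentratedness accurately reflects the argument in the cited reference.
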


\begin{lemma}
\label{lemma: Dqc is rcg}
	Let $\Xc$ be a qcqs concentrated algebraic stack of characteristic $0$ that admits a good moduli space.
	Then $\Dqc (\Xc)$ is rigidly compactly generated, and the properties of being perfect, compact and dualisable are equivalent in $\Dqc (\Xc)$.
\end{lemma}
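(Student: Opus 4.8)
The plan is to verify, one at a time, the three properties that together say $(\Dqc(\Xc),\otimes^{\Lb})$ is rigidly compactly generated: (a) the unit $\Oc_\Xc$ is compact; (b) an object of $\Dqc(\Xc)$ is perfect if and only if it is dualisable; (c) $\Dqc(\Xc)$ is compactly generated by a set of perfect complexes. Items (a) and (b), combined with a formal argument, will also force the identification ``perfect $=$ dualisable $=$ compact''.

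Items (a) and (b) are formal. For (a): since $\Xc$ is concentrated, $\Rb\Gamma(\Xc,-)$ preserves small coproducts by \cite{hall-rydh:2017:perfect-complexes}, which is exactly the statement that $\Oc_\Xc$ is a compact object of $\Dqc(\Xc)$. For (b): for a qcqs algebraic stack a complex in $\Dqc(\Xc)$ is perfect if and only if it is dualisable for $\otimes^{\Lb}$, again by \cite{hall-rydh:2017:perfect-complexes}. Granting (a), every dualisable $K$ is compact, because $\Hom_{\Dqc(\Xc)}(K,-)\cong\Hom_{\Dqc(\Xc)}(\Oc_\Xc,K^{\vee}\otimes^{\Lb}-)$ and $K^{\vee}\otimes^{\Lb}(-)$, being a left adjoint (with right adjoint $\localRHom(K^{\vee},-)$), preserves small coproducts; hence perfect $\Rightarrow$ dualisable $\Rightarrow$ compact. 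What is left is the reverse implication, that every compact object is perfect, and for this it suffices to prove (c): if $\Dqc(\Xc)$ is compactly generated by a \emph{set of perfect complexes} $\mathcal{S}$, then by Neeman's description of the compact objects of a compactly generated triangulated category these coincide with the thick subcategory generated by $\mathcal{S}$, which lies inside the (thick) subcategory of perfect complexes.

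So the real work is (c): compact generation of $\Dqc(\Xc)$ by perfect complexes. Here I would exploit characteristic zero and the good moduli space $\pi\colon\Xc\to X$. By the Luna-type étale slice theorem of Alper--Hall--Rydh \cite{alper-hall-rydh}, there is a finite étale cover $\{U_i\to X\}_{i=1}^{N}$ (finite because $X$ is quasi-compact) such that each $\Xc_{U_i}\coloneqq\Xc\times_X U_i$ is a quotient stack $[\Spec A_i/G_i]$ with $G_i$ linearly reductive; in characteristic zero $G_i$ is reductive, so $BG_i$ is cohomologically affine, $\Oc_{BG_i}$ is compact, and $\Dqc(BG_i)$ is compactly generated by the vector bundles $\Oc_{BG_i}\otimes_k V$ with $V$ running over the irreducible representations of $G_i$ — these are perfect and, being dualisable with $\Oc_{BG_i}$ compact, also compact. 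Pulling them back along the affine (hence concentrated) morphism $[\Spec A_i/G_i]\to BG_i$, whose pushforward is exact and preserves coproducts, produces a set of perfect compact generators of $\Dqc(\Xc_{U_i})$; see \cite{hall-rydh:2017:perfect-complexes}. It then remains to glue compact generation from the charts $\Xc_{U_i}$ to $\Xc$: over the quasi-compact algebraic space $X$ this is done by induction on $N$ via a Mayer--Vietoris / Thomason-type localisation triangle associated to the cover, which is legitimate because $\pi$ is cohomologically affine with (quasi-)affine diagonal, so the restriction and pushforward functors along quasi-compact opens of $X$ behave well; this is precisely the descent machinery of \cite{hall-rydh:2017:perfect-complexes}. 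Since the resulting generating set consists of perfect, hence dualisable, complexes and $\Oc_\Xc$ is compact, $\Dqc(\Xc)$ is rigidly compactly generated.

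The step I expect to be the main obstacle is this last gluing: one must check that the Hall--Rydh compact-generation-by-descent results apply at the level of generality here (qcqs and concentrated, not necessarily noetherian or of finite type), and in particular that using an \emph{étale} slice cover rather than a Zariski one does not break the Mayer--Vietoris induction. In the noetherian case — the only one needed in the body of this paper — this is immediate from the cited results; in general one reduces to it by a standard limiting argument.
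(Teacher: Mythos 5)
Your proposal is correct in substance, but it reaches the key step by a more hands-on route than the paper. The paper's proof is essentially three citations: rigid compact generation is unpacked per Neeman's definition (compact generation, tensor commutes with coproducts, compact $=$ dualisable); compact generation follows because a stack admitting a good moduli space is of \emph{s-global type}, so \cite[Theorem B]{hall-rydh:2017:perfect-complexes} applies directly; and the identification perfect $=$ compact $=$ dualisable is \cite[Lemmas 4.3 and 4.4]{hall-rydh:2017:perfect-complexes}. You instead (i) give the formal argument for ``dualisable $\Rightarrow$ compact'' from compactness of $\Oc_\Xc$ (which you correctly extract from concentratedness) and for ``compact $\Rightarrow$ perfect'' from Neeman's thick-subcategory theorem, and (ii) re-derive compact generation from the Alper--Hall--Rydh \'etale local structure theorem plus representation theory of reductive groups plus the Hall--Rydh descent machinery. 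Step (ii) is exactly a sketch of how Theorem B is proved in this situation, so it buys self-containedness at the cost of re-exposing the genuinely delicate point you yourself flag: gluing compact generation along an \'etale (not Zariski) cover is not a naive Mayer--Vietoris argument but requires the Hall--Rydh induction principle with supports ($\beta$-crispness), and the local-quotient presentation requires affine (or at least nice) diagonal, which holds for the stacks actually used in this paper but is not literally among the stated hypotheses --- a gap the paper sidesteps by invoking s-global type directly. One cosmetic omission: Neeman's definition of rigidly compactly generated also asks that the tensor product commute with coproducts, which the paper checks (trivially) and you do not mention explicitly, though your argument supplies it implicitly.
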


 \begin{proof}
  The notion of rigidly compactly generated category can be found in e.g. \cite[Reminder 2.2]{neeman:2021}. In order to satisfy this condition, we need to check that $\Dqc (\Xc)$ is compactly generated, tensor product commutes with coproducts, and that the compact objects in $\Dqc (\Xc)$ are precisely the dualisable complexes.
 	Since $\Xc$ admits a good moduli space, it is of s-global type (see \cite[p. 1]{hall-rydh:2017:perfect-complexes} for the definition), hence
 	by \cite[Theorem B]{hall-rydh:2017:perfect-complexes}, the category $\Dqc (\Xc)$ is compactly generated.
 	By definition of tensor product in algebraic geometry, it commutes with coproducts.
 	By \cite[Lemmas 4.3 and 4.4]{hall-rydh:2017:perfect-complexes}, the properties of being perfect, compact and dualisable are equivalent in $\Dqc (\Xc)$ under our assumptions.
 \end{proof}

\begin{lemma}
\label{lemma: existence of right adjoint of pushforward}
	Let $f \colon \Xc \to \Yc$ be a concentrated morphism of algebraic stacks.
	\begin{enumerate}[label=\emph{(\roman*)}]
		\item \label{item: right adjoint f^x of pushforward}
		$\Rb f_* \colon \Dqc (\Xc) \to \Dqc (\Yc)$ admits a right adjoint $f^\times$.
		\item \label{item: formula for f^x of a perfect complex}
		For a perfect $P \in \Dqc (\Yc)$, we have a natural isomorphism $f^\times (\Oc_\Yc) \otimes^\Lb \Lb f^* (P)  \cong f^\times (P)$.
	\end{enumerate}
\end{lemma}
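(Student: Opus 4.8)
The plan is as follows. For part \emph{\ref{item: right adjoint f^x of pushforward}}, I would invoke the adjoint functor theorem (in its triangulated/Brown representability form). The functor $\Rb f_* \colon \Dqc(\Xc) \to \Dqc(\Yc)$ is exact and — this is the crucial point — commutes with all small coproducts. The latter holds precisely because $f$ is concentrated: by \cite[Theorem 2.6(3)]{hall-rydh:2017:perfect-complexes}, $\Rb f_*$ has finite cohomological amplitude on $\Dqc$ and preserves small coproducts for concentrated $f$ (this is essentially the defining good property of concentrated morphisms). Since $\Dqc(\Xc)$ is compactly generated — again because $\Xc$ is of s-global type, as used in \cref{lemma: Dqc is rcg}, applying \cite[Theorem B]{hall-rydh:2017:perfect-complexes} — a coproduct-preserving exact functor out of it automatically admits a right adjoint by Neeman's Brown representability theorem. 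I would cite this directly (e.g.\ \cite[Theorem 1.3.8]{hall-rydh:2017:perfect-complexes} records exactly this statement), and call the resulting right adjoint $f^\times$.

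For part \emph{\ref{item: formula for f^x of a perfect complex}}, the strategy is to verify the claimed isomorphism $f^\times(\Oc_\Yc) \otimes^{\Lb} \Lb f^*(P) \cong f^\times(P)$ by checking that both sides corepresent the same functor $\Dqc(\Xc) \to \mathrm{Ab}$, i.e.\ testing against an arbitrary $K \in \Dqc(\Xc)$ via $\Hom_{\Dqc(\Xc)}(K, -)$. For the left-hand side, since $P$ is perfect, $\Lb f^* P$ is perfect, hence dualisable in $\Dqc(\Xc)$ by \cref{lemma: Dqc is rcg}, with $(\Lb f^* P)^\vee \cong \Lb f^*(P^\vee)$. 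Therefore
\[
\Hom\bigl(K,\ f^\times(\Oc_\Yc) \otimes^{\Lb} \Lb f^* P\bigr) \cong \Hom\bigl(K \otimes^{\Lb} \Lb f^*(P^\vee),\ f^\times(\Oc_\Yc)\bigr) \cong \Hom\bigl(\Rb f_*(K \otimes^{\Lb} \Lb f^* (P^\vee)),\ \Oc_\Yc\bigr),
\]
using dualisability of $\Lb f^* P$ in the first step and the adjunction $(\Rb f_*, f^\times)$ in the second. Now apply the projection formula \cref{theorem: projection formula} to rewrite $\Rb f_*(K \otimes^{\Lb} \Lb f^* (P^\vee)) \cong \Rb f_*(K) \otimes^{\Lb} P^\vee$; then, since $P$ (hence $P^\vee$) is perfect on $\Yc$ and thus dualisable by \cref{lemma: Dqc is rcg}, we get $\Hom(\Rb f_* K \otimes^{\Lb} P^\vee, \Oc_\Yc) \cong \Hom(\Rb f_* K, P)$. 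Finally, $\Hom(\Rb f_* K, P) \cong \Hom(K, f^\times P)$ by the adjunction again. Chaining these natural isomorphisms gives $\Hom(K, f^\times(\Oc_\Yc) \otimes^{\Lb} \Lb f^* P) \cong \Hom(K, f^\times P)$ naturally in $K$, and the Yoneda lemma yields the claimed isomorphism.

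I expect the main subtlety — rather than outright obstacle — to be bookkeeping about which objects are dualisable and ensuring all the identifications are natural enough to feed into Yoneda: one must use \cref{lemma: Dqc is rcg} on \emph{both} $\Xc$ and $\Yc$ (so both need to be qcqs concentrated of characteristic $0$ admitting good moduli spaces, which should be arranged as running hypotheses or noted to follow from \cref{theorem: when qcqs stacks are concentrated} in the cases of interest), and one must invoke \cref{lemma: basic dualities}\emph{\ref{item: switching sides for perfect complexes}} to move the perfect complex across $\localRHom$. The existence statement in part \emph{\ref{item: right adjoint f^x of pushforward}} is essentially a citation; the only thing to be careful about there is confirming $\Rb f_*$ preserves arbitrary coproducts, which is exactly where concentratedness of $f$ is used and should be stated explicitly.
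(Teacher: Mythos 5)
Your proof of part \emph{(ii)} is essentially identical to the paper's: the same chain of natural isomorphisms (move the dualisable $\Lb f^*P$ across, apply the $(\Rb f_*, f^\times)$ adjunction, the projection formula, move $P^\vee$ back, apply the adjunction again) followed by Yoneda. The one point worth flagging is in part \emph{(i)}. The paper simply cites \cite[Theorem 4.14(1)]{hall-rydh:2017:perfect-complexes}, which holds for an arbitrary concentrated morphism of algebraic stacks. Your Brown-representability argument instead rests on $\Dqc(\Xc)$ being \emph{compactly} generated, which you justify by assuming $\Xc$ is of s-global type --- but the lemma as stated makes no such hypothesis, and for a general algebraic stack $\Dqc(\Xc)$ need not be compactly generated (e.g.\ $\Dqc(B\mathbb{G}_a)$ in positive characteristic has no nonzero compact objects). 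The fix is standard: $\Dqc(\Xc)$ is always \emph{well} generated, and Brown representability for well-generated triangulated categories, together with the fact that $\Rb f_*$ preserves coproducts for concentrated $f$, yields the right adjoint in full generality --- this is in effect how the cited result is proved. So your route is repairable, but as written it proves the existence of $f^\times$ only under hypotheses stronger than those in the statement; since in this paper the lemma is later applied to stacks satisfying those extra hypotheses, nothing downstream breaks, but the lemma itself would need either the citation or the well-generation argument.
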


\begin{proof}
	Part \emph{\ref{item: right adjoint f^x of pushforward}} is \cite[Theorem 4.14(1)]{hall-rydh:2017:perfect-complexes}. For Part \emph{\ref{item: formula for f^x of a perfect complex}}, we apply the Yoneda Lemma to the following sequence of isomorphisms:
	\begin{align*}
	 \RHom_\Xc ( \blank, f^\times (\Oc_\Yc) \otimes^\Lb \Lb f^* (P))
		& \cong \RHom_\Xc ( \blank \otimes^\Lb \Lb f^* (P)^\vee, f^\times (\Oc_\Yc) )
		, \qquad \text{\small \cref{lemma: basic dualities} \emph{\ref{item: switching sides for perfect complexes}}}
		\\ & \cong
		\RHom_\Yc \left(
			\Rb f_* (\blank \otimes^\Lb \Lb f^* (P)^\vee) , \Oc_\Yc
			\right)
		, \qquad \text{\small \cref{lemma: existence of right adjoint of pushforward} \emph{\ref{item: right adjoint f^x of pushforward}}}
		\\ & \cong
		\RHom_\Yc \left(
			(\Rb f_*\blank) \otimes^\Lb P^\vee ,  \Oc_\Yc
			\right)
		, \qquad \text{\small \cref{theorem: projection formula}}
		\\ & \cong
		\RHom_\Yc \left(
		\Rb f_*(\blank) , P
		\right)
		, \qquad \text{\small \cref{lemma: basic dualities} \emph{\ref{item: switching sides for perfect complexes}}}
		\\ & \cong
		\RHom_\Xc \left( \blank , f^\times (P)
		\right)
		, \qquad \text{\small \cref{lemma: existence of right adjoint of pushforward} \emph{\ref{item: right adjoint f^x of pushforward}}.}
		\qedhere
	\end{align*}
\end{proof} 

	We remark that we define $f^\times$ as the right adjoint to pushforward, and that this functor in general does not agree with the exceptional pullback $f^!$ that is part of Grothendieck's 6-functor formalism. However if we assume that $f$ is proper, then $f^\times \cong f^!$.

\begin{lemma}
\label{lemma: THE dualising complex commutes with pullback}
	Consider the following tor-independent Cartesian square of qcqs concentrated algebraic stacks that all admit good moduli spaces.
	\[\begin{tikzcd}
		{\Xc'} & \Xc \\
		{\Yc'} & \Yc
		\arrow["{q}", from=1-1, to=1-2]
		\arrow["\bar{p}", from=1-2, to=2-2]
		\arrow["\bar{q}"', from=2-1, to=2-2]
		\arrow["{p}"', from=1-1, to=2-1]
	\end{tikzcd}\]
	If $\bar p_*$ is concentrated and
	$\Rb \bar p_*$ sends compact objects to compact objects,
	then the natural transformation $\Lb q^* \bar p^\times \to p^\times \Lb \bar q$ is an isomorphism.
\end{lemma}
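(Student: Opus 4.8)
The plan is to prove that the base-change transformation $\gamma\colon\Lb q^*\bar p^\times\to p^\times\Lb\bar q^*$ is an isomorphism by showing both functors are continuous, reducing the check to the perfect complexes on $\Yc$ (which generate $\Dqc(\Yc)$), and then reducing that in turn to the structure sheaf, where it becomes a Yoneda computation assembled from the projection formula and tor-independent base change. To set up the transformation: since $\bar p$ is concentrated and concentrated morphisms are stable under pullback, $p$ is concentrated as well; hence $p^\times$ exists (\cref{lemma: existence of right adjoint of pushforward}) and the projection formula (\cref{theorem: projection formula}) and tor-independent base change (\cref{theorem: base change}, applicable since the square is tor-independent and $\bar p$, $p$ are concentrated) are available for the relevant morphisms. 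By the $(\Rb p_*,p^\times)$-adjunction a map $\Lb q^*\bar p^\times M\to p^\times\Lb\bar q^* M$ is the same as a map $\Rb p_*\Lb q^*\bar p^\times M\to\Lb\bar q^* M$, and $\gamma_M$ is the composite of the base-change isomorphism $\Rb p_*\Lb q^*\cong\Lb\bar q^*\Rb\bar p_*$ with $\Lb\bar q^*$ applied to the counit $\Rb\bar p_*\bar p^\times\to\mathrm{id}$.

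Next I would check that $\Lb q^*\bar p^\times$ and $p^\times\Lb\bar q^*$ both preserve small coproducts, hence all colimits (they are exact). The pullbacks $\Lb q^*$ and $\Lb\bar q^*$ do so as left adjoints. For $\bar p^\times$ to preserve coproducts it is equivalent, in a compactly generated tensor-triangulated category, that its left adjoint $\Rb\bar p_*$ preserve compact objects; by \cref{lemma: Dqc is rcg} the categories $\Dqc$ of all four stacks are compactly generated with compacts equal to perfects, so this is exactly the hypothesis on $\bar p$. For $p^\times$ the same reduces to $\Rb p_*$ preserving perfects, which I would deduce as follows: $\Dqc(\Xc')$ is compactly generated by the external tensor products $\Lb q^* K\otimes^\Lb\Lb p^* M'$ with $K$ perfect on $\Xc$ and $M'$ perfect on $\Yc'$, and by the projection formula and base change $\Rb p_*(\Lb q^* K\otimes^\Lb\Lb p^* M')\cong\Lb\bar q^*\Rb\bar p_* K\otimes^\Lb M'$, which is perfect since $\Rb\bar p_* K$ is; as $\Rb p_*$ is exact and preserves coproducts (concentratedness), it preserves all perfect complexes.

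It therefore suffices to prove $\gamma_M$ is an isomorphism for $M$ perfect, since then the full subcategory of $\Dqc(\Yc)$ on which $\gamma$ is invertible is localizing and contains a generating set. Using \cref{lemma: existence of right adjoint of pushforward}(ii) we have $\bar p^\times M\cong\bar p^\times\Oc_\Yc\otimes^\Lb\Lb\bar p^* M$ and, since $\Lb\bar q^* M$ is again perfect, $p^\times\Lb\bar q^* M\cong p^\times\Oc_{\Yc'}\otimes^\Lb\Lb p^*\Lb\bar q^* M$; combined with $\Lb q^*\Lb\bar p^*\cong\Lb p^*\Lb\bar q^*$ this reduces us to $M=\Oc_\Yc$, i.e.\ to the assertion $\Lb q^*\bar p^\times\Oc_\Yc\cong p^\times\Oc_{\Yc'}$ that the relative dualising complex commutes with the base change. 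Here I would argue by Yoneda: $p^\times\Oc_{\Yc'}$ represents $K'\mapsto\RHom_{\Yc'}(\Rb p_* K',\Oc_{\Yc'})$, and evaluating both sides on the generating perfects $K'=\Lb q^* K\otimes^\Lb\Lb p^* M'$ and chasing through tensor--Hom adjunction together with the fact that $\localRHom$ commutes with $\Lb q^*$ on perfects (\cref{lemma: basic dualities}), the $(\Lb p^*,\Rb p_*)$- and $(\Rb p_*,p^\times)$-adjunctions, base change, the projection formula, and the relative-duality isomorphism $\Rb\bar p_*\localRHom(K,\bar p^\times\Oc_\Yc)\cong\localRHom(\Rb\bar p_* K,\Oc_\Yc)$ (itself a short Yoneda argument of the same shape as the proof of \cref{lemma: existence of right adjoint of pushforward}(ii)), one identifies $\RHom_{\Xc'}(K',\Lb q^*\bar p^\times\Oc_\Yc)$ with $\RHom_{\Yc'}(\Rb p_* K',\Oc_{\Yc'})$ compatibly with $\gamma_{\Oc_\Yc}$.

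The main obstacle is twofold. First, the Yoneda chain only closes up if $\Rb\bar p_* K$, and hence $\Rb p_*\Lb q^* K\cong\Lb\bar q^*\Rb\bar p_* K$, is dualisable, so that derived pullback can be commuted past $(-)^\vee$ at the appropriate steps; this is precisely the hypothesis that $\Rb\bar p_*$ sends compact objects to compact objects, which therefore enters the proof twice (also in controlling $p^\times$). Second, one needs that $\Dqc(\Xc')$ is compactly generated by external tensor products, which rests on the four stacks being of s-global type — guaranteed here by the standing assumption that they admit good moduli spaces — and is the structural input I expect to require the most care to cite correctly.
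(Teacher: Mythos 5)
Your proposal is correct, and its skeleton coincides with the paper's: both arguments first show that $\Dqc(\Xc')$ is compactly generated by external tensor products $\Lb q^*K\otimes^\Lb \Lb p^* M'$ (using tor-independence to identify $\Xc'$ with the derived fibre product, plus compact generation coming from the stacks being of s-global type), and both then verify that $\Rb p_*$ preserves compacts via exactly the computation $\Rb p_*(\Lb q^*K\otimes^\Lb \Lb p^*M')\cong(\Lb\bar q^*\Rb\bar p_*K)\otimes^\Lb M'$ from the projection formula and tor-independent base change. The divergence is only in the final step: the paper, having checked these hypotheses, invokes Neeman's abstract base-change theorem (\cite[Proposition 6.3]{neeman:2021}) for the induced square of compactly generated categories, whereas you re-prove that abstract statement by hand in this setting --- continuity of $\Lb q^*\bar p^\times$ and $p^\times\Lb\bar q^*$ reduces the claim to perfect $M$, the twisting formula $f^\times(P)\cong f^\times(\Oc)\otimes^\Lb\Lb f^*P$ reduces further to $M=\Oc_\Yc$, and a Yoneda computation against the generating compacts closes the argument, with the hypothesis that $\Rb\bar p_*$ preserves compacts entering precisely where you need $(\Lb\bar q^*\Rb\bar p_*K)^\vee\cong\Lb\bar q^*\bigl((\Rb\bar p_*K)^\vee\bigr)$. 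Your route is longer but self-contained and makes visible exactly where each hypothesis is used; the paper's is shorter at the cost of outsourcing the formal triangulated-category argument. The one place where you should be as careful as you promise is the compatibility of your chain of identifications with the specific transformation $\gamma$ (rather than merely producing an abstract isomorphism of the two $\RHom$ groups); this naturality check is routine but is precisely what Neeman's formulation packages for free.
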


\begin{proof}
	By {\cite[Lemma 2.5(1)]{hall-rydh:2017:perfect-complexes}} and \cref{lemma: existence of right adjoint of pushforward}, we deduce that $\bar p^\times$ and $p^\times$ exist.

	Since the square by assumption is tor-independent, $\Xc'$ is isomorphic to the derived fibre product, and hence the
	argument of \cite[Proposition 3.24]{ben-zvi-francis-nadler:2010} applies and proves that
	the category $\Dqc (\Xc')$ is generated by compact objects of the form $\Lb q^* K \otimes^\Lb \Lb p^* M$, where $K \in \Dqc (\Xc)$ and $M \in \Dqc (\Yc')$ are compact.
	Hence the category of compact objects in $\Dqc (\Xc')$ is classically generated by objects of this form, by \cite[Theorem 2.1.2]{bondal-vdb:2003},
	and therefore, to prove that $\Rb p_*$ sends compact objects to compact objects, we just need to prove that $\Rb p_* (\Lb q^* K \otimes^\Lb \Lb p^* M)$ is compact for every compact $K \in \Dqc (\Xc)$ and $M \in \Dqc (\Yc')$.
	By the projection formula (\cref{theorem: projection formula}) and base change (\cref{theorem: base change}), we have
	$\Rb p_* (\Lb q^* K \otimes^\Lb \Lb p^* M) =
	(\Rb p_* \Lb q^* K) \otimes^\Lb M =
	(\Lb \bar q^* \Rb \bar p_* K) \otimes^\Lb M$.
	But $\Rb \bar p_* K$ is compact by assumption, hence perfect by \cref{lemma: Dqc is rcg}, which implies that $\Lb \bar q^* \Rb \bar p_* K$ is perfect and compact, and the same holds for $(\Lb \bar q^* \Rb \bar p_* K) \otimes^\Lb M$.

	With this, and also by \cref{lemma: Dqc is rcg} and \cite[Proposition 5.3, Definition 5.4]{neeman:2021},
	we can apply \cite[Proposition 6.3]{neeman:2021} to the induced square of derived categories.
\end{proof}

Let us give a strengthening of \cite[Corollary 4.15]{hall-rydh:2017:perfect-complexes}, which is formulated in a much more restrictive setting: in particular, their morphism is required to be finite.

\begin{theorem}[Grothedieck duality for algebraic stacks]
	\label{theorem: Grothendieck duality}
	Let $p \colon \Xc \to \Yc$ be a concentrated morphism of qcqs concentrated algebraic stacks that admit good moduli spaces.
	If $\Rb p_*$ sends compact objects to compact objects,
	then for any $K \in \Dqc (\Xc)$ and a perfect $P \in \Dqc (\Yc)$,
	there is a natural isomorphism
	\[
	\Rb p_* \localRHom_\Xc (K , P \otimes^\Lb p^\times \Oc_\Yc) \cong \localRHom_\Yc (\Rb p_* K, P),
	\]
	and the formation of $p^\times \Oc_\Yc$ commutes with tor-independent base change to a qcqs concentrated algebraic stack that admits a good moduli space.
\end{theorem}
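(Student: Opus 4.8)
The plan is to reduce immediately to the case $P = \Oc_\Yc$, prove the resulting duality isomorphism by a formal Yoneda argument, and then obtain the base-change statement as an instance of \cref{lemma: THE dualising complex commutes with pullback}.

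First I would note that $P \otimes^\Lb p^\times \Oc_\Yc$ is shorthand for $\Lb p^* P \otimes^\Lb p^\times \Oc_\Yc$, and since $P$ is perfect and $p$ is concentrated, \cref{lemma: existence of right adjoint of pushforward} \emph{\ref{item: formula for f^x of a perfect complex}} identifies this with $p^\times P$. So it suffices to produce a natural isomorphism
\[
\Rb p_* \localRHom_\Xc(K, p^\times M) \;\cong\; \localRHom_\Yc(\Rb p_* K, M),
\]
and in fact this will hold for arbitrary $M \in \Dqc(\Yc)$, not just perfect ones. To prove it, I would test both sides against an arbitrary $N \in \Dqc(\Yc)$ and chain the available adjunctions:
\begin{align*}
\RHom_\Yc\big(N, \Rb p_* \localRHom_\Xc(K, p^\times M)\big)
&\cong \RHom_\Xc\big(\Lb p^* N, \localRHom_\Xc(K, p^\times M)\big)\\
&\cong \RHom_\Xc\big(\Lb p^* N \otimes^\Lb K,\, p^\times M\big)\\
&\cong \RHom_\Yc\big(\Rb p_*(\Lb p^* N \otimes^\Lb K),\, M\big)\\
&\cong \RHom_\Yc\big((\Rb p_* K) \otimes^\Lb N,\, M\big)\\
&\cong \RHom_\Yc\big(N, \localRHom_\Yc(\Rb p_* K, M)\big),
\end{align*}
using in turn the $(\Lb p^*, \Rb p_*)$-adjunction, tensor--Hom adjunction (\cref{lemma: basic dualities} \emph{\ref{item: tensor-Hom adjunction}}), the $(\Rb p_*, p^\times)$-adjunction (\cref{lemma: existence of right adjoint of pushforward} \emph{\ref{item: right adjoint f^x of pushforward}}), the projection formula (\cref{theorem: projection formula}, which applies since $p$ is concentrated), and tensor--Hom adjunction again. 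All of these are natural in $N$, $K$ and $M$, so the Yoneda lemma converts this into the asserted isomorphism of objects, functorial in $K$ and $P$.

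For the base-change claim, write $\bar q \colon \Yc' \to \Yc$ for the given morphism, $q \colon \Xc' \to \Xc$ for its pullback along $p$, and $p' \colon \Xc' \to \Yc'$ for the base change of $p$. I would apply \cref{lemma: THE dualising complex commutes with pullback} to this square and evaluate the resulting isomorphism $\Lb q^* p^\times \xrightarrow{\ \sim\ } (p')^\times \Lb \bar q^*$ at $\Oc_\Yc$, which gives $\Lb q^*(p^\times \Oc_\Yc) \cong (p')^\times \Oc_{\Yc'}$ as desired. The work here is checking the hypotheses of that lemma for $\Xc'$: it is qcqs (as $p$ is concentrated, hence qcqs, and $\Yc'$ is qcqs), concentrated (base change of a concentrated morphism is concentrated, and composing with $\Yc' \to \Spec\ZZ$ preserves this), and admits a good moduli space (good moduli spaces are stable under base change); moreover $p'$ is concentrated by the same token, and $\Rb p_*$ preserves compactness by hypothesis, which the lemma then upgrades to $\Rb p'_*$.

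The main obstacle is the last point: ensuring that the base-changed stack $\Xc' = \Xc \times_\Yc \Yc'$ still satisfies the standing hypotheses — most delicately, that it admits a good moduli space — so that \cref{lemma: Dqc is rcg} is available and \cref{lemma: THE dualising complex commutes with pullback} genuinely applies. If one prefers to avoid this, an alternative is to re-run the Yoneda argument of the second paragraph relative to the square, using \cref{theorem: base change} and \cref{theorem: projection formula} to commute $\Rb p_*$ and the tensor products past the pullbacks; this proves the base-change compatibility directly at the cost of repeating the formal manipulations.
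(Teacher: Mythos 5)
Your proposal is correct and follows essentially the same route as the paper: the duality isomorphism is proved by testing against an arbitrary object and chaining the $(\Lb p^*,\Rb p_*)$ and $(\Rb p_*,p^\times)$ adjunctions with tensor--Hom adjunction and the projection formula, then invoking Yoneda, and the base-change claim is deduced from \cref{lemma: THE dualising complex commutes with pullback}. Your extra care in checking that the fibre product $\Xc'$ satisfies the standing hypotheses (qcqs, concentrated, admitting a good moduli space) is a point the paper's proof passes over silently, but it does not change the argument.
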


\begin{proof}
	For any test object $T \in \Dqc (\Yc)$, we will apply the functor $\Hom_\Yc(T, \blank)$ to the desired isomorphism and observe that we indeed get an isomorphism using \cref{lemma: basic dualities}\emph{\ref{item: tensor-Hom adjunction}} and \cref{theorem: projection formula}. By the Yoneda Lemma, we conclude the desired formula for Grothendieck duality.
	By \cref{lemma: THE dualising complex commutes with pullback}, the formation of $f^\times \Oc_\Yc$ commutes with pullback as in the statement.
\end{proof}

% -
\subsection{Proof of the determinantal line bundle formula}
\label{appendix: proof of det line bundle formula}
% -

The aim of this section is to prove \cref{proposition:determinantal line bundles dependence on V}.

\begin{lemma}
	\label{lemma: determinantal line bundle from a skyscraper}
	For a closed point $x\in |\Cc| = |C|$, let $\Oc_x = \pi^*\Oc_{C, x}$ be the skyscraper sheaf on $\Cc$ supported on $x$. Fix a generating $\alphab \in \Knum (\Cc)$.
	Then $\Lc_{\Oc_x} = \det \Uc_{\betab, x}$ over $\Mstackss$, where
	$\Uc_{\betab, x} = \Uc_{\betab} |_{\{x\} \times \Mstackss}$ is the restriction of the universal bundle.
\end{lemma}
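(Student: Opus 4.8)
The plan is to compute $\Lc_{\Oc_x}$ directly from its definition \eqref{eq: definition of determinantal line bundle} by resolving the skyscraper sheaf $\Oc_x$. Since $x$ is a non-stacky point of $\Cc$ and $\Oc_x = \pi^*\Oc_{C,x}$, there is a short exact sequence $0 \to \Oc_\Cc(-x) \to \Oc_\Cc \to \Oc_x \to 0$ on $\Cc$, where $\Oc_\Cc(-x) = \pi^*\Oc_C(-x)$ is a line bundle. Pulling back along $q \colon \Cc \times \Mstackss \to \Cc$ and applying $\localHom(\blank, \Uc_\betab)$, we get an exact triangle relating $\localHom(q^*\Oc_x, \Uc_\betab)$, $\Uc_\betab$, and $\Uc_\betab(x) := \Uc_\betab \otimes q^*\Oc_\Cc(x)$. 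Applying $\Rb p_*$ and taking determinants, the additivity of $\det$ on exact triangles gives
\[
\Lc_{\Oc_x} = \det\left(\Rb p_*\localHom(q^*\Oc_x,\Uc_\betab)\right)^\vee = \det\left(\Rb p_* \Uc_\betab(x)\right)^\vee \otimes \det\left(\Rb p_* \Uc_\betab\right),
\]
so it suffices to identify this with $\det \Uc_{\betab,x} = \det(\Uc_\betab|_{\{x\}\times \Mstackss})$.

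Next I would use the exact triangle $\Uc_\betab \to \Uc_\betab(x) \to \Uc_\betab(x)|_{\{x\}\times\Mstackss}$ coming from tensoring $0 \to \Oc_\Cc \to \Oc_\Cc(x) \to \Oc_x \to 0$ (pulled back via $q$) with $\Uc_\betab$; note that $\Oc_\Cc(x) \otimes \Oc_x \cong \Oc_x$ since $x$ is non-stacky, so the cokernel term is indeed the restriction of $\Uc_\betab$ (twisted by the fiber of $\Oc_\Cc(x)$ at $x$, which is canonically trivial, or at worst a fixed line that cancels). Applying $\Rb p_*$ and using that $p$ restricted to $\{x\}\times\Mstackss$ is an isomorphism onto $\Mstackss$, we get $\Rb p_*(\Uc_\betab(x)|_{\{x\}\times\Mstackss}) \cong \Uc_{\betab,x}$, a vector bundle placed in degree $0$. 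Additivity of determinant then yields $\det(\Rb p_*\Uc_\betab(x)) \cong \det(\Rb p_*\Uc_\betab) \otimes \det\Uc_{\betab,x}$, and substituting into the formula above gives $\Lc_{\Oc_x} \cong \det\Uc_{\betab,x}$ as claimed.

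The routine points to check are: that $\Rb p_*$ commutes with the relevant base change to $\{x\}\times\Mstackss$ (this is flat base change, or Tor-independent base change as in \cref{theorem: base change}, since $\{x\} \hookrightarrow \Cc$ is a regular closed immersion and $\Uc_\betab$ is flat over $\Mstackss$); that the determinant of a perfect complex is additive in exact triangles (standard, and the complexes here are perfect since $\Cc \times \Mstackss \to \Mstackss$ is concentrated and the sheaves involved are coherent and flat over the base); and the bookkeeping of the canonical trivialization $\Oc_\Cc(x)|_x \cong \kappa(x)$. The main subtlety — though not a serious obstacle — is making sure the two twisting conventions ($\Uc_\betab(x)$ versus $\Uc_\betab(-x)$, and whether $\localHom(q^*\Oc_x,\Uc_\betab)$ is concentrated in degree $1$) are consistent; I would resolve this by working throughout with the single exact sequence $0 \to \Oc_\Cc(-x) \to \Oc_\Cc \to \Oc_x \to 0$ and its $q^*$-pullback, applying $\localRHom(\blank,\Uc_\betab)$ once, which identifies $\localRHom(q^*\Oc_x,\Uc_\betab) \cong \Uc_\betab(x)[-1]$ on the nose, so that the shift $[-1]$ and the resulting dual account precisely for the transition from $\det(\Rb p_*(\Uc_\betab(x)))^\vee$ to the desired $\det\Uc_{\betab,x}$ after the second triangle.
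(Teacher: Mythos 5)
Your approach is correct in outline and genuinely different from the paper's. The paper proves this lemma by writing $\Oc_x$ as a pushforward from $\Spec k$ and running the definition through the Grothendieck--Serre duality formalism for the projection $p$ (this computation is the reason Appendix A.1 develops $p^\times$ and its base-change properties); you instead resolve $\Oc_x$ by the Koszul sequence $0\to\Oc_\Cc(-x)\to\Oc_\Cc\to\Oc_x\to 0$ and use multiplicativity of the determinant on exact triangles, in the spirit of Dr\'ezet--Narasimhan. This buys a proof that bypasses the entire duality machinery of the appendix; it costs nothing here, since the class $[\Oc_x]$ visibly has a two-term locally free resolution. Your restriction to non-stacky $x$ is harmless: $\Oc_x=\pi^*\Oc_{C,x}$ is pulled back from $C$, so the pulled-back resolution works at any closed point, and the paper's own proof makes the same implicit identification.

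However, your intermediate formulas carry a sign (inverse) error that must be fixed. Applying $\localRHom(\blank,\Uc_\betab)$ to the Koszul triangle gives the triangle $\localRHom(q^*\Oc_x,\Uc_\betab)\to\Uc_\betab\to\Uc_\betab(x)\to{}$, so multiplicativity of $\det$ after applying $\Rb p_*$ yields
\[
\Lc_{\Oc_x}=\det\bigl(\Rb p_*\localRHom(q^*\Oc_x,\Uc_\betab)\bigr)^\vee=\det\bigl(\Rb p_*\Uc_\betab(x)\bigr)\otimes\det\bigl(\Rb p_*\Uc_\betab\bigr)^\vee,
\]
which is the \emph{inverse} of your first display; following your display literally through the second triangle produces $\det(\Uc_{\betab,x})^\vee$ rather than $\det\Uc_{\betab,x}$. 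Relatedly, $\localRHom(q^*\Oc_x,\Uc_\betab)$ is not $\Uc_\betab(x)[-1]$ but $\bigl(\Uc_\betab(x)|_{\{x\}\times\Mstackss}\bigr)[-1]$, the $\localExt^1$ sheaf supported on $\{x\}\times\Mstackss$ placed in degree one. With these corrections the one-triangle computation you sketch at the end closes the argument: $\Lc_{\Oc_x}=\det\bigl((\Uc_\betab(x)|_{\{x\}\times\Mstackss})[-1]\bigr)^\vee=\det\bigl(\Uc_\betab(x)|_{\{x\}\times\Mstackss}\bigr)$, which differs from $\det\Uc_{\betab,x}$ only by the fixed one-dimensional space $(\Oc_\Cc(x)|_x)^{\otimes\rank\betab}$ and hence is isomorphic to it (not canonically, but the lemma only asserts an isomorphism).
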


\begin{proof}
	Let $x \colon \Spec k \to \Cc$ denote the point $x$ of $\Cc$ and consider the following diagram.
	\[\begin{tikzcd}
		{\Spec k} & \Mstackss \\
		\Cc & {\Cc \times \Mstackss} \\
		{\Spec k} && \Mstackss
		\arrow["{q'}"', from=1-2, to=1-1]
		\arrow["x"', from=1-1, to=2-1]
		\arrow["{x'}", from=1-2, to=2-2]
		\arrow["q"', from=2-2, to=2-1]
		\arrow["{=}", curve={height=-18pt}, from=1-2, to=3-3]
		\arrow["p", from=2-2, to=3-3]
		\arrow["{\bar p}", from=2-1, to=3-1]
		\arrow["{\bar q}"', from=3-3, to=3-1]
	\end{tikzcd}\]
	The proof is a direct calculation, for which we make a few observations. First, by  \cref{proposition: Mstackss of stacky curve is finite type}, the stack $\Mstackss$ is finite type with affine diagonal and affine stabilisers, and thus by \cref{theorem: when qcqs stacks are concentrated}, it is concentrated. Further, as concentrated morphisms are stable under pullback, we conclude that $p$ is a concentrated morphism.
	Notice that several functors do not need deriving: $x_* = \Rb x_*$, $q^* = \Lb q^*$, $p^* = \Lb p^*$, and that $V \otimes (\blank) = V \otimes^\Lb (\blank)$ for a vector bundle $V$.
	By Serre duality (\cref{theorem: Serre duality}), we have $\bar p^\times \Oc_{\Spec k} \cong \omega_\Cc[1]$ and thus  $p^\times (\Oc_{\Mstackss}) \cong q^* \omega_{\Cc} [1]$ by \cref{lemma: THE dualising complex commutes with pullback}.
	\begin{align*}
		\Lc_{\Oc_x}
		&=
		\det (\Rb p_* \localRHom (q^* x_* \Oc_x , \Uc_\betab))^\vee
		\\ &=
		\det (\Rb p_* \localRHom (x'_*  \Oc_{\Mstackss}, \Uc_\betab))^\vee
		, \qquad \text{by \cref{theorem: base change}}
		\\ &=
		\det \left( \Rb p_*
		\localRHom
		\left(
		\Uc_\betab^\vee \otimes p^\times \Oc_{\Mstackss} \otimes x'_*  \Oc_{\Mstackss},
		p^\times \Oc_{\Mstackss} \right)
		\right)^\vee
		, \qquad \text{by \cref{lemma: basic dualities}\emph{\ref{item: switching sides for perfect complexes}}}
		\\ &=
		\det \left( \Rb p_*
		\localRHom
		\left(
		x'_* (x'^* \Uc_\betab^\vee \otimes x'^* p^\times \Oc_{\Mstackss}),
		p^\times \Oc_{\Mstackss} \right)
		\right)^\vee
		, \qquad \text{by \cref{theorem: projection formula}}
		\\ &=
		\det \left(
		\localRHom_{\Mstackss}
		\left(
		\Rb p_*
		x'_* \left(x'^* \Uc_\betab^\vee \otimes  x'^* q^* \omega_\Cc[1] \right)
		,
		\Oc_{\Mstackss}
		\right)
		\right)^\vee
		, \qquad \text{by \cref{theorem: Grothendieck duality}}
		\\ &=
		\det \left(
		\localRHom_{\Mstackss}
		\left(
		x'^* \Uc_\betab^\vee [1]
		,
		\Oc_{\Mstackss}
		\right)
		\right)^\vee
		, \qquad \text{as } px'=\mathrm{id} \text{ and } x'^* q^* \omega_\Cc = q'^* x^* \omega_\Cc = \Oc_{\Mstackss}
		\\ &=
		\det (x'^* \Uc_\betab^\vee [1])
	    =
		\det (x'^* \Uc_\betab).
		\qedhere
	\end{align*}
\end{proof}

Using this computation, we are finally able to prove \cref{proposition:determinantal line bundles dependence on V}.

\begin{proof}[Proof of \cref{proposition:determinantal line bundles dependence on V}]
	Since the assignment $V \mapsto \Lc_V$ defines a group homomorphism $\mathrm K_0(\Cc) \to \pic{\Mstackss}$, we can write $\Lc_V = \Lc_{V'} \otimes \Lc_{[V]-[V']}$.
	By assumption, $[V] = [V']$ in $\Knum (\Cc)$, so $[V] - [V'] = [\det \pi_* V] - [\det \pi_* V']$, which can be rewritten as an integer combination of points of $C$ or equivalent of $\Cc$ via the identification $\pi \colon |\Cc| \to |C|$.
	The result now follows from \cref{lemma: determinantal line bundle from a skyscraper}.
\end{proof}

\bibliographystyle{alpha}
\bibliography{biblio}

\newcommand{\etalchar}[1]{$^{#1}$}
\begin{thebibliography}{ABHLX20}

\bibitem[ABB{\etalchar{+}}22]{alper-etal:2022}
Jarod Alper, Pieter Belmans, Daniel Bragg, Jason Liang, and Tuomas Tajakka.
\newblock Projectivity of the moduli space of vector bundles on a curve.
\newblock In {\em Stacks {P}roject {E}xpository {C}ollection ({SPEC})}, volume
  480 of {\em London Math. Soc. Lecture Note Ser.}, pages 90--125. Cambridge
  Univ. Press, Cambridge, 2022.

\bibitem[ABHLX20]{alper-blum-etal:2020}
Jarod Alper, Harold Blum, Daniel Halpern-Leistner, and Chenyang Xu.
\newblock Reductivity of the automorphism group of {$K$}-polystable {F}ano
  varieties.
\newblock {\em Invent. Math.}, 222(3):995--1032, 2020.

\bibitem[AHLH23]{AHLH:existence}
Jarod Alper, Daniel Halpern-Leistner, and Jochen Heinloth.
\newblock Existence of moduli spaces for algebraic stacks.
\newblock {\em Invent. Math.}, 234(3):949--1038, 2023.

\bibitem[AHR23]{alper-hall-rydh}
Jarod Alper, Jack Hall, and David Rydh.
\newblock The \'etale local structure of algebraic stacks, 2023.

\bibitem[Alp13]{alper:2013}
Jarod Alper.
\newblock Good moduli spaces for {A}rtin stacks.
\newblock {\em Ann. Inst. Fourier (Grenoble)}, 63(6):2349--2402, 2013.

\bibitem[Alp24]{alper-moduli}
Jarod Alper.
\newblock Stacks and moduli.
\newblock \url{https://sites.math.washington.edu/~jarod/moduli.pdf}, 5 2024.

\bibitem[BDF{\etalchar{+}}22]{BDFHMT}
Pieter Belmans, Chiara Damiolini, Hans Franzen, Victoria Hoskins, Svetlana
  Makarova, and Tuomas Tajakka.
\newblock Projectivity and effective global generation of determinantal line
  bundles on quiver moduli, 2022.

\bibitem[Bis97]{biswas:1997}
Indranil Biswas.
\newblock Parabolic bundles as orbifold bundles.
\newblock {\em Duke Math. J.}, 88(2):305--325, 1997.

\bibitem[BN06]{behrend-noohi:2006}
Kai Behrend and Behrang Noohi.
\newblock Uniformization of {D}eligne-{M}umford curves.
\newblock {\em J. Reine Angew. Math.}, 599:111--153, 2006.

\bibitem[Bor07]{borne:2007}
Niels Borne.
\newblock Fibr\'{e}s paraboliques et champ des racines.
\newblock {\em Int. Math. Res. Not. IMRN}, (16):Art. ID rnm049, 38, 2007.

\bibitem[BS15]{balaji.seshadri:2015}
V.~Balaji and C.~S. Seshadri.
\newblock Moduli of parahoric {$\mathcal{G}$}-torsors on a compact {R}iemann
  surface.
\newblock {\em J. Algebraic Geom.}, 24(1):1--49, 2015.

\bibitem[BvdB03]{bondal-vdb:2003}
A.~Bondal and M.~van~den Bergh.
\newblock Generators and representability of functors in commutative and
  noncommutative geometry.
\newblock {\em Mosc. Math. J.}, 3(1):1--36, 258, 2003.

\bibitem[BZFN10]{ben-zvi-francis-nadler:2010}
David Ben-Zvi, John Francis, and David Nadler.
\newblock Integral transforms and {D}rinfeld centers in derived algebraic
  geometry.
\newblock {\em J. Amer. Math. Soc.}, 23(4):909--966, 2010.

\bibitem[CLM22]{cheng-etal}
Raymond Cheng, Carl Lian, and Takumi Murayama.
\newblock Projectivity of the moduli of curves.
\newblock In {\em Stacks {P}roject {E}xpository {C}ollection ({SPEC})}, volume
  480 of {\em London Math. Soc. Lecture Note Ser.}, pages 1--43. Cambridge
  Univ. Press, Cambridge, 2022.

\bibitem[DG13]{drinfeld-gaitsgory:2013}
Vladimir Drinfeld and Dennis Gaitsgory.
\newblock On some finiteness questions for algebraic stacks.
\newblock {\em Geom. Funct. Anal.}, 23(1):149--294, 2013.

\bibitem[DM24]{das-majumder}
Soumyadip Das and Souradeep Majumder.
\newblock Construction of the moduli space of vector bundles on an orbifold
  curve, 2024.

\bibitem[DN89]{drezet-narasimhan:1989}
J.-M. Drezet and M.~S. Narasimhan.
\newblock Groupe de {P}icard des vari\'{e}t\'{e}s de modules de fibr\'{e}s
  semi-stables sur les courbes alg\'{e}briques.
\newblock {\em Invent. Math.}, 97(1):53--94, 1989.

\bibitem[EP04]{esteves-popa:2004}
Eduardo Esteves and Mihnea Popa.
\newblock Effective very ampleness for generalized theta divisors.
\newblock {\em Duke Math. J.}, 123(3):429--444, 2004.

\bibitem[Est99]{esteves:1999}
Eduardo Esteves.
\newblock Separation properties of theta functions.
\newblock {\em Duke Math. J.}, 98(3):565--593, 1999.

\bibitem[Fal93]{faltings:1993}
Gerd Faltings.
\newblock Stable {$G$}-bundles and projective connections.
\newblock {\em J. Algebraic Geom.}, 2(3):507--568, 1993.

\bibitem[GW20]{gortz-wedhorn}
Ulrich G\"ortz and Torsten Wedhorn.
\newblock {\em Algebraic geometry {I}. {S}chemes---with examples and
  exercises}.
\newblock Springer Studium Mathematik---Master. Springer Spektrum, Wiesbaden,
  second edition, 2020.

\bibitem[Hal14]{cohomology-basechange}
Jack Hall.
\newblock Cohomology and base change for algebraic stacks.
\newblock {\em Math. Z.}, 278(1-2):401--429, 2014.

\bibitem[HL10]{huybrechts-lehn:2010}
Daniel Huybrechts and Manfred Lehn.
\newblock {\em The geometry of moduli spaces of sheaves}.
\newblock Cambridge Mathematical Library. Cambridge University Press,
  Cambridge, second edition, 2010.

\bibitem[Hof10]{hoffmann:2010}
Norbert Hoffmann.
\newblock Moduli stacks of vector bundles on curves and the {K}ing-{S}chofield
  rationality proof.
\newblock In {\em Cohomological and geometric approaches to rationality
  problems}, volume 282 of {\em Progr. Math.}, pages 133--148. Birkh\"{a}user
  Boston, Boston, MA, 2010.

\bibitem[HR15]{hall-rydh:2015}
Jack Hall and David Rydh.
\newblock Algebraic groups and compact generation of their derived categories
  of representations.
\newblock {\em Indiana Univ. Math. J.}, 64(6):1903--1923, 2015.

\bibitem[HR17]{hall-rydh:2017:perfect-complexes}
Jack Hall and David Rydh.
\newblock Perfect complexes on algebraic stacks.
\newblock {\em Compos. Math.}, 153(11):2318--2367, 2017.

\bibitem[Hua23]{huang:2023:langton}
Yong~Hong Huang.
\newblock Langton's type theorem on algebraic orbifolds.
\newblock {\em Acta Math. Sin. (Engl. Ser.)}, 39(4):584--602, 2023.

\bibitem[Lan75]{Langton:1975}
Stacy~G. Langton.
\newblock Valuative criteria for families of vector bundles on algebraic
  varieties.
\newblock {\em Ann. of Math. (2)}, 101:88--110, 1975.

\bibitem[LO08]{laszlo-olsson:2008}
Yves Laszlo and Martin Olsson.
\newblock The six operations for sheaves on {A}rtin stacks. {II}. {A}dic
  coefficients.
\newblock {\em Publ. Math. Inst. Hautes \'{E}tudes Sci.}, (107):169--210, 2008.

\bibitem[MFK94]{mumford:GIT}
D.~Mumford, J.~Fogarty, and F.~Kirwan.
\newblock {\em Geometric invariant theory}, volume~34 of {\em Ergebnisse der
  Mathematik und ihrer Grenzgebiete (2) [Results in Mathematics and Related
  Areas (2)]}.
\newblock Springer-Verlag, Berlin, third edition, 1994.

\bibitem[MO07]{marian-oprea:2007}
Alina Marian and Dragos Oprea.
\newblock The level-rank duality for non-abelian theta functions.
\newblock {\em Invent. Math.}, 168(2):225--247, 2007.

\bibitem[MS80]{mehta-seshadri:1980}
V.~B. Mehta and C.~S. Seshadri.
\newblock Moduli of vector bundles on curves with parabolic structures.
\newblock {\em Math. Ann.}, 248(3):205--239, 1980.

\bibitem[Nee21]{neeman:2021}
Amnon Neeman.
\newblock New progress on {G}rothendieck duality, explained to those familiar
  with category theory and with algebraic geometry.
\newblock {\em Bull. Lond. Math. Soc.}, 53(2):315--335, 2021.

\bibitem[New78]{newstead:tata}
P.~E. Newstead.
\newblock {\em Introduction to moduli problems and orbit spaces}, volume~51 of
  {\em Tata Institute of Fundamental Research Lectures on Mathematics and
  Physics}.
\newblock Tata Institute of Fundamental Research, Bombay; Narosa Publishing
  House, New Delhi, 1978.

\bibitem[Nir09]{nironi:2009}
Fabio Nironi.
\newblock Moduli spaces of semistable sheaves on projective {D}eligne-{M}umford
  stacks.
\newblock arXiv: Algebraic Geometry, 2009.
\newblock \url{https://arxiv.org/abs/0811.1949}, version 2.

\bibitem[NS65]{NS:1965}
M.~S. Narasimhan and C.~S. Seshadri.
\newblock Stable and unitary vector bundles on a compact {R}iemann surface.
\newblock {\em Ann. of Math. (2)}, 82:540--567, 1965.

\bibitem[Ols07]{olsson:2007}
Martin Olsson.
\newblock Sheaves on {A}rtin stacks.
\newblock {\em J. Reine Angew. Math.}, 603:55--112, 2007.

\bibitem[OS03]{olsson-starr:2003}
Martin Olsson and Jason Starr.
\newblock Quot functors for {D}eligne-{M}umford stacks.
\newblock volume~31, pages 4069--4096. 2003.
\newblock Special issue in honor of Steven L. Kleiman.

\bibitem[OS08]{olsson-starr:arxiv}
Martin Olsson and Jason Starr.
\newblock Quot functors for {D}eligne-{M}umford stacks, 2008.
\newblock Version 2.

\bibitem[Ses67]{seshadri:1967}
C.~S. Seshadri.
\newblock Space of unitary vector bundles on a compact {R}iemann surface.
\newblock {\em Ann. of Math. (2)}, 85:303--336, 1967.

\bibitem[Ses93]{Seshadri:VB}
C.~S. Seshadri.
\newblock Vector bundles on curves.
\newblock In {\em Linear algebraic groups and their representations ({L}os
  {A}ngeles, {CA}, 1992)}, volume 153 of {\em Contemp. Math.}, pages 163--200.
  Amer. Math. Soc., Providence, RI, 1993.

\bibitem[Sta24]{stacks-project}
{The Stacks Project Authors}.
\newblock The stacks project.
\newblock \url{https://stacks.math.columbia.edu}, 2024.

\bibitem[Taa24]{Lisanne}
Lisanne Taams.
\newblock Sheaves on stacky curves.
\newblock \url{https://champlisse.github.io/papers.html}, 2024.


\bibitem[Vak24]{rising-sea}
Ravi Vakil.
\newblock The {R}ising {S}ea: {F}oundations of {A}lgebraic {G}eometry.
\newblock \url{https://math.stanford.edu/~vakil/216blog/FOAGfeb2124public.pdf},
  2 2024.

\bibitem[VZB22]{voight-ZB:2022}
John Voight and David Zureick-Brown.
\newblock The canonical ring of a stacky curve.
\newblock {\em Mem. Amer. Math. Soc.}, 277(1362):v+144, 2022.

\end{thebibliography}

\vspace{1cm}
\setlength\parindent{0pt}
\setlength\parskip{4pt}

\href{mailto:chiara.damiolini@utexas.edu}{\texttt{chiara.damiolini@austin.utexas.edu}} \\
University of Texas at Austin, Austin, USA

\href{mailto:v.hoskins@math.ru.nl}{\texttt{v.hoskins@math.ru.nl}} \\
Radboud University, Nijmegen, Netherlands

\href{mailto:murmuno@gmail.com}{\texttt{murmuno@gmail.com}} \\
Universität Duisburg--Essen, Essen, Germany

\href{mailto:l.taams@math.ru.nl}{\texttt{l.taams@math.ru.nl}} \\
Radboud University, Nijmegen, Netherlands

\end{document}